\documentclass[a4paper,10pt]{amsart}
\usepackage[top=0.8in, bottom=0.8in, left=1.25in, right=1.25in]{geometry}
\usepackage{amsmath}
\usepackage{amsfonts}
\usepackage{amssymb}
\usepackage{graphicx}
\usepackage{hyperref}
\usepackage{mathrsfs}
\usepackage{pb-diagram}
\usepackage{epstopdf}
\usepackage{CJK}
\numberwithin{equation}{section}
\allowdisplaybreaks[4]

\usepackage{amscd,amsthm,curves,enumerate,latexsym}
\usepackage{xypic}
\usepackage[all]{xy}
\usepackage{epstopdf}

\newtheorem{theorem}{Theorem}[section]
\newtheorem{lemma}[theorem]{Lemma}
\newtheorem{proposition}[theorem]{Proposition}
\newtheorem{exprop}[theorem]{Example-Proposition}

\newtheorem{corollary}[theorem]{Corollary}
\newtheorem{conjecture}[theorem]{Conjecture}
\newtheorem{question}[theorem]{Question}

\theoremstyle{definition}
\newtheorem{definition}[theorem]{Definition}

\theoremstyle{remark}
\newtheorem{remark}[theorem]{Remark}

\newcommand{\bP}{\mathbb{P}}

\newcommand\OO{{\mathcal{O}}}
\newcommand{\rounddown}[1]{\lfloor{#1}\rfloor}

\newcommand{\bC}{{\mathbb C}}

\newcommand\Pic{\text{\rm Pic}}

\newcommand\PP{{\mathcal{P}}}

\newcommand\ZZ{{\mathbb{Z}}}
\newcommand\bR{{\mathbb{R}}}

\newcommand\s{{ {\mathcal S}}}

 \newcommand\RR{{\mathcal{R}}}

 \newcommand{\MM}{{\mathcal M}}

\newcommand\Td{{\rm{Td}}}
\newcommand\td{{\rm{td}}}
\newcommand\Hilb{{\rm{Hilb}}}
\newcommand\Sym{{\rm{Sym}}}
\newcommand\ch{{\rm{ch}}}
\newcommand\bn{{\rm{\bf n}}}
\newcommand\bq{{\rm{\bf q}}}
\newcommand\pp{{\rm{p}}}

\begin{document}
\title{Remarks on Kawamata's effective non-vanishing conjecture for manifolds with trivial first Chern classes}
\author{Yalong Cao}
\address{Kavli Institute for the Physics and Mathematics of the Universe (WPI),The University of Tokyo Institutes for Advanced Study, The University of Tokyo, Kashiwa, Chiba 277-8583, Japan}
\email{yalong.cao@ipmu.jp}

\author{Chen Jiang}
\address{Shanghai Center for Mathematical Sciences, Fudan University, Jiangwan Campus, 2005 Songhu Road, Shanghai, 200438, China}
\email{chenjiang@fudan.edu.cn}

\thanks{Both authors were supported by World Premier International Research Center Initiative (WPI), MEXT, Japan. The second author was supported by JSPS KAKENHI Grant Number JP16K17558.}

\maketitle
\pagestyle{myheadings} \markboth{\hfill Y. Cao \& C. Jiang
\hfill}{\hfill Remarks on Kawamata's conjecture\hfill}
\begin{abstract}
Kawamata proposed a conjecture predicting that every nef and big line bundle on a smooth projective variety with trivial first Chern class has nontrivial global sections. We verify this conjecture for several cases, including (i) all hyperk\"{a}hler varieties of dimension $\leq 6$; (ii) all known hyperk\"{a}hler varieties except for O'Grady's 10-dimensional example; (iii) general complete intersection Calabi--Yau varieties in certain Fano manifolds (e.g. toric ones). Moreover, we investigate the effectivity of Todd classes of hyperk\"{a}hler varieties and Calabi--Yau varieties. We prove that the fourth Todd classes are ``quasi-effective" for all hyperk\"{a}hler varieties and general complete intersection Calabi--Yau varieties in products of projective spaces.
\end{abstract}


\section{Introduction}
Throughout this paper, we work over the complex number field $\bC$.

Kodaira vanishing theorem asserts that $H^{i}(X,K_{X}\otimes L)=0$ for $i>0$ and any ample line bundle $L$ on a smooth projective variety $X$. It is natural to ask when $H^0(X, K_{X}\otimes L)$ does not vanish. For example, Fujita's freeness conjecture predicts that $|K_X\otimes L^{\otimes (\dim X+1)}|$ is base-point-free (hence has many global sections).
Kawamata proposed the following conjecture in general settings. 
\begin{conjecture}[{Effective Non-vanishing Conjecture, \cite[Conjecture 2.1]{kawamata2}, see also \cite{Ambro99}}]\label{kawamata conj0}
Let $X$ be a projective normal variety with at most log terminal singularities, $L$ a Cartier divisor on $X$. Assume that $L$ is nef
and $L-K_{X}$ is nef and big. Then $H^{0}(X,L)\neq0$.
\end{conjecture}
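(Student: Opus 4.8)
The plan is to reduce the conjecture to the strict positivity of an Euler characteristic, and then to attack that via Hirzebruch--Riemann--Roch together with positivity properties of Todd classes. Since the general statement (arbitrary klt $X$) is a well-known open problem, the realistic target is the case on which this paper concentrates, namely $c_1(X)=0$, where the positivity structure is cleanest.

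First I would invoke Kawamata--Viehweg vanishing for the klt variety $X$. Because $L-K_X$ is nef and big, writing $L=K_X+(L-K_X)$ yields $H^i(X,L)=0$ for every $i>0$. Hence $h^0(X,L)=\chi(X,L)$, and since $h^0\geq 0$ it suffices to prove the strict inequality $\chi(X,L)>0$ (equivalently $\chi(X,L)\neq 0$). This turns a non-vanishing question into a numerical one accessible through intersection theory.

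Next I would apply $\chi(X,L)=\int_X \ch(L)\cdot \td(X)$, passing to a resolution or to the orbifold setting to handle the klt singularities. Specializing to the main case $c_1(X)=0$ makes $\td(X)$ equal to the $\hat A$-genus, so every odd-degree Todd class vanishes and
$$\chi(X,L)=\sum_{k\geq 0}\frac{1}{(n-2k)!}\int_X c_1(L)^{\,n-2k}\cdot \td_{2k}(X),\qquad n=\dim X.$$
The top term ($k=0$) equals $(L^n)/n!$, which is strictly positive because $L$ is nef and big. The strategy is then to show that every remaining summand is nonnegative, which would force $\chi(X,L)>0$ and finish the argument.

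The main obstacle, and the technical heart of the approach, is establishing that the even Todd classes pair nonnegatively against powers of the nef class $L$, that is $\int_X c_1(L)^{\,n-2k}\cdot \td_{2k}(X)\geq 0$. This is exactly what a suitable notion of (fake) effectivity of Todd classes is meant to encode. For $\td_2=c_2(X)/12$ one can hope to exploit positivity of $c_2$ against nef classes (via stability/Miyaoka-type results, or the Beauville--Bogomolov form in the hyperk\"ahler case); for $\td_4$ the problem becomes genuinely delicate, so only a weaker \emph{fake} effectivity is available and only for restricted families. I expect that controlling these higher Todd terms, and in full generality dealing with $c_1(X)\neq 0$ where the odd Todd classes reappear and the clean positivity breaks down, is precisely where the general conjecture remains out of reach, forcing one to argue dimension by dimension and family by family.
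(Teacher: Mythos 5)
The statement you were asked to prove is Conjecture \ref{kawamata conj0} itself --- Kawamata's Effective Non-vanishing Conjecture --- which is an open problem: the paper never claims to prove it, and only verifies the special case Conjecture \ref{kawamata conj} (smooth $X$, $c_1(X)=0$) for restricted families. So there is no ``paper's own proof'' to measure your attempt against; it can only be judged as a strategy. As a strategy, your reduction is exactly the paper's framework: Kawamata--Viehweg vanishing turns non-vanishing of $H^0(X,L)$ into strict positivity of $\chi(X,L)$; Hirzebruch--Riemann--Roch expands $\chi(X,L)$ in Todd classes; the leading term $\frac{1}{n!}\int_X c_1(L)^n$ is positive because $L$ is nef and big; and everything reduces to the nonnegativity of $\int_X \td_{2k}(X)\cdot c_1(L)^{n-2k}$, i.e.\ to what the paper calls fake effectivity (compare Theorem \ref{effective of Td4}, Corollary \ref{verification for six dim}, Proposition \ref{verification for beauville}).

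The genuine gap is that the step you label ``the main obstacle'' is not a deferrable lemma --- it is the entire content of the conjecture, and it is open. The paper only establishes it piecewise: $\td_2$ via Miyaoka--Yau, $\td_4$ for hyperk\"ahler varieties via Nieper's formula together with Hitchin--Sawon's positivity of $\int_X\sqrt{\td(X)}$ and a Hodge--Riemann argument for $c_2^2$, and all Todd classes only for the known hyperk\"ahler deformation types via explicit $\chi$-polynomials. Two further points in your sketch would fail as written. First, for genuinely singular klt $X$ the odd Todd classes do not vanish and Riemann--Roch requires the Baum--Fulton--MacPherson Todd class of the singular variety (not Chern classes of a tangent bundle), so your clean even-degree expansion is only valid in the smooth $c_1=0$ setting; ``passing to a resolution'' does not preserve the hypothesis $L-K_X$ nef and big without discrepancy terms. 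Second, the paper's unconditional higher-dimensional results (Theorems \ref{HRR for odd CY} and \ref{HRR for even CY}) do \emph{not} proceed by proving each Todd summand nonnegative: they assume $\chi(X,L^{\otimes i})=0$ for consecutive small $i$, observe that the Hilbert polynomial $f(t)=\chi(X,L^{\otimes t})$ is then a polynomial with prescribed roots, and derive a contradiction by comparing its subleading coefficient with the Miyaoka--Yau nonnegativity of $\int_X c_2(X)\cdot L^{n-2}$. That root-counting trick extracts non-vanishing of some small power $L^{\otimes i}$ using only $\td_2$, which is precisely the kind of conclusion your all-summands-nonnegative approach cannot reach with current technology.
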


Recall that a line bundle $L$ on $X$ is said to be {\it nef} if $L\cdot C\geq 0$ for any curve $C\subset X$, moreover, it is said to be {\it big} if $L^{\dim X}>0$.

We are interested in this conjecture for manifolds with trivial first Chern classes. In particular, Kawamata's Effective Non-vanishing Conjecture predicts the following conjecture:

\begin{conjecture}\label{kawamata conj}
Let $X$ be a smooth projective variety with $c_1(X)=0$ in $H^2(X, \bR)$ and $L$ a nef and big line bundle on $X$. Then $H^{0}(X,L)\neq0$.
\end{conjecture}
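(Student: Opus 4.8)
The statement is an instance of Kawamata's conjecture, and the natural line of attack is through Hirzebruch--Riemann--Roch. The plan is to reduce the non-vanishing $H^0(X,L)\neq 0$ to the numerical positivity of the Euler characteristic $\chi(X,L)$, and then to control $\chi(X,L)$ via the Todd class of $X$. For the first reduction I would invoke a vanishing theorem. Since $c_1(X)=0$, the canonical class $K_X$ is numerically trivial, so that $L-K_X\equiv L$ is nef and big (for the hyperk\"ahler and Calabi--Yau cases of interest one has in fact $\omega_X\cong\OO_X$). Kawamata--Viehweg vanishing then gives $H^i(X,L)=H^i(X,K_X+(L-K_X))=0$ for all $i>0$, and therefore $h^0(X,L)=\chi(X,L)$. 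The problem is thus reduced to showing $\chi(X,L)>0$.

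Next I would apply Hirzebruch--Riemann--Roch. Writing $\ell=c_1(L)$ and $n=\dim X$, one has
\begin{equation*}
\chi(X,L)=\int_X e^{\ell}\,\td(X)=\sum_{k=0}^{n}\frac{1}{k!}\int_X \ell^{\,k}\cdot\td_{n-k}(X).
\end{equation*}
Because $c_1(X)=0$, the Todd class simplifies considerably: the odd-degree pieces involving $c_1$ drop out, leaving $\td_0=1$, $\td_2=c_2/12$, $\td_4=(3c_2^2-c_4)/720$, and so on. The top term $k=n$ contributes $\ell^{\,n}/n!=L^n/n!>0$ since $L$ is big, while each remaining summand is a pairing of the nef class $\ell^{\,k}$ against a Todd class $\td_{n-k}(X)$.

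The crux---and the main obstacle---is therefore the positivity, or \emph{effectivity}, of the Todd classes of $X$: if each $\td_{j}(X)$ can be represented so as to pair nonnegatively with all products of nef classes (i.e. is ``fakely effective''), then every summand is nonnegative and the big leading term forces $\chi(X,L)>0$. I expect establishing such effectivity to be genuinely hard in general and to require case-by-case input. For hyperk\"ahler varieties one can exploit the Beauville--Bogomolov form together with the known relations among Chern numbers, so that $\chi(X,L)$ becomes an explicit polynomial in $q(L)$ whose coefficients can be analyzed directly; the low-dimensional cases and the known deformation types should then be amenable to finite computation. For complete intersection Calabi--Yau varieties one can instead read off the Chern classes from the normal bundle sequence of $X$ inside the ambient Fano manifold and verify nonnegativity of the relevant pairings there.

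In short, the skeleton of the argument is uniform---vanishing plus Riemann--Roch reduces everything to the sign of $\chi(X,L)$, and the structure $c_1(X)=0$ makes the leading term automatically positive---so that the entire content of the conjecture is pushed into the effectivity of the middle and lower Todd classes, which is exactly the point where geometry specific to each class of varieties must enter.
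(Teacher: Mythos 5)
The statement you are addressing is a conjecture, and your proposal does not prove it; neither does the paper, which only verifies it in special cases. Your setup is sound and matches the paper's general philosophy: since $c_1(X)=0$ in $H^2(X,\bR)$, the canonical class is numerically trivial, Kawamata--Viehweg gives $h^0(X,L)=\chi(X,L)$, Hirzebruch--Riemann--Roch expands $\chi(X,L)$ with positive leading term $L^n/n!$, and everything reduces to the nonnegativity of the pairings $\int_X \ell^{\,k}\cdot \td_{n-k}(X)$. But that last step --- fake effectivity of the Todd classes --- is exactly where you stop, and it is precisely the open problem (it is the paper's Question 1.6, not a lemma one can cite). It is not a matter of routine case-by-case computation: even for hyperk\"ahler varieties the paper only establishes fake effectivity of $\td_4$, and this already requires Nieper's characteristic-value formula, the Hitchin--Sawon positivity theorem $\int_X\sqrt{\td(X)}>0$, Fujiki's theorem, and a Hodge--Riemann bilinear-relations argument for $\int_X c_2^2\cdot(\sigma\overline{\sigma})^{n-2}>0$; effectivity of \emph{all} Todd classes is known only for varieties homeomorphic to $\Hilb^n(S)$, generalized Kummers, and O'Grady's $6$-dimensional example, where explicit closed formulas for $\chi(X,L)$ (Ellingsrud--G\"ottsche--Lehn, Britze--Nieper, Mongardi--Rapagnetta--Sacc\`a) happen to have nonnegative coefficients. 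Moreover, your target intermediate statement (effectivity of all Todd classes) may well be strictly stronger than the conjecture itself, so deferring to it is a logically risky reduction, not merely an unfinished one.

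Two further points of comparison with the paper. First, before specializing to hyperk\"ahler or Calabi--Yau geometry you need the Beauville--Bogomolov decomposition (the paper's Proposition 2.2): one passes to a finite \'etale cover splitting $X$ into abelian, Calabi--Yau and hyperk\"ahler factors, and uses $\Pic(X')\cong\prod\Pic(\text{factors})$ plus Kawamata--Viehweg on both $X$ and $X'$ to reduce the conjecture to the irreducible factors; without this your appeal to the Beauville--Bogomolov form or to complete-intersection structure has no footing for a general $X$ with $c_1=0$. Second, for complete intersection Calabi--Yau varieties the paper does \emph{not} prove the conjecture via Todd effectivity at all (it only gets $\td_4$ there, after lengthy combinatorics, and only in products of projective spaces); instead it uses Koll\'ar's theorem identifying $\overline{NE}(X)\cong\overline{NE}(Y)$, the Lefschetz hyperplane theorem, and an induction along the complete intersection comparing $h^0(X,L|_X)$ with $h^0(Y,L)$ via Kawamata--Viehweg, thereby reducing to non-vanishing on the ambient perfect Fano manifold. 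That route sidesteps the Todd-class positivity problem entirely, which is exactly why it succeeds where your proposed route would stall.
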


Thanks to Yau's proof of Calabi conjecture \cite{yau}, we have the so-called Beauville--Bogomolov decomposition (\cite{beauville, bogomolov}) for compact K\"{a}hler manifolds with trivial first Chern classes, which allows us to reduce Conjecture \ref{kawamata conj} to the cases of Calabi--Yau varieties and hyperk\"{a}hler varieties.
\begin{proposition}[{=Proposition \ref{reduce to irreducible}}]
Conjecture \ref{kawamata conj} is true if it holds for all Calabi--Yau varieties and hyperk\"{a}hler varieties.
\end{proposition}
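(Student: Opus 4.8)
The plan is to convert the non-vanishing statement into a positivity statement about Euler characteristics and then exploit the fact that the Euler characteristic is multiplicative both under finite \'etale covers and under products. The crucial initial observation is that $c_1(X)=0$ forces $K_X\equiv 0$ numerically, so for a nef and big $L$ the class $L-K_X\equiv L$ is again nef and big; Kawamata--Viehweg vanishing then gives $H^i(X,L)=0$ for all $i>0$. Hence $h^0(X,L)=\chi(X,L)$, and the conjecture becomes equivalent to the numerical inequality $\chi(X,L)>0$. Since Riemann--Roch expresses $\chi(X,L)=\int_X\ch(L)\,\td(X)$, it depends only on the numerical class of $L$ and the topology of $X$, which is precisely what makes the reduction go through.

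First I would invoke the Beauville--Bogomolov decomposition to obtain a finite \'etale cover $\pi\colon\tilde X\to X$ of some degree $d$ with $\tilde X\cong T\times\prod_i V_i\times\prod_j W_j$, where $T$ is an abelian variety, the $V_i$ are Calabi--Yau, and the $W_j$ are hyperk\"ahler; since $X$ is projective and $\pi$ is finite, $\tilde X$ and all factors are projective. Writing $\tilde L=\pi^*L$, the pullback is again nef and big, so $H^i(\tilde X,\tilde L)=0$ for $i>0$ as well. Because $\pi$ is \'etale we have $\td(\tilde X)=\pi^*\td(X)$ and $\ch(\tilde L)=\pi^*\ch(L)$, whence $\chi(\tilde X,\tilde L)=d\cdot\chi(X,L)$. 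Consequently $h^0(X,L)>0$ if and only if $h^0(\tilde X,\tilde L)>0$, and it suffices to treat the product $\tilde X$.

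Next I would decompose $\tilde L$ along the product. Only the torus factor $T$ has $H^1(\cdot,\OO)\neq0$, so by the K\"unneth formula $H^2(\tilde X)$ carries no mixed $H^1\otimes H^1$ contribution and splits as a direct sum over the factors; correspondingly (modulo torsion) the class $c_1(\tilde L)$ decomposes as $\ell_T+\sum_i\ell_{V_i}+\sum_j\ell_{W_j}$, each summand pulled back from the respective factor. Restricting $\tilde L$ to a slice $T\times\{z\}$, $\{t\}\times V_i\times\{z'\}$, etc., shows that each component is nef, and expanding the top self-intersection $(\tilde L)^{\dim\tilde X}$ (where only the ``pure'' monomial survives) shows that each is in fact big. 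Multiplicativity of the Euler characteristic on the product then gives
\[
\chi(\tilde X,\tilde L)=\chi(T,L_T)\cdot\prod_i\chi(V_i,L_{V_i})\cdot\prod_j\chi(W_j,L_{W_j}),
\]
where $L_T,L_{V_i},L_{W_j}$ are line bundles representing the components. Each factor on the right is positive: for the abelian variety $T$ this is the classical (known) case of the conjecture, while for the Calabi--Yau and hyperk\"ahler factors it is exactly the assumed hypothesis (again via $h^0=\chi$).

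The main obstacle, though a soft one, is organizing the passage from the analytic Beauville--Bogomolov decomposition to a genuinely algebraic and projective product, and verifying that the polarization restricts correctly so that the numerical splitting of $\tilde L$ and the bigness of each component are legitimate. Once higher cohomology is killed by Kawamata--Viehweg vanishing, everything else is the formal multiplicativity of $\chi$; the only way positivity could fail to factor would be a correspondence (mixing) term between two factors, and this is precisely excluded by the vanishing of $H^1(\cdot,\OO)$ on all but the single torus factor.
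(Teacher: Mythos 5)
Your proof is correct and follows essentially the same route as the paper: Beauville--Bogomolov decomposition to a finite cover splitting as abelian $\times$ Calabi--Yau $\times$ hyperk\"ahler factors, Kawamata--Viehweg vanishing to replace $h^0$ by $\chi$, splitting of the pulled-back bundle across the factors, multiplicativity of $\chi$, and the classical abelian case. The only cosmetic difference is that the paper splits $\pi^*L$ as an honest box tensor using the isomorphism $\Pic(X')\cong\Pic(A)\times\prod_i\Pic(X_i)\times\prod_j\Pic(Y_j)$ (valid since $h^1(\mathcal{O})=0$ on all non-abelian factors), whereas you split only the numerical class $c_1(\pi^*L)$ via K\"unneth and then use that $\chi$ depends only on that class by Riemann--Roch; both are legitimate, and your multinomial expansion of the top self-intersection makes explicit the bigness of each component, which the paper dismisses as obvious.
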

In this paper, a {\it Calabi--Yau variety} is a compact K\"{a}hler manifold $X$ of dimension $n \geq 3$ with trivial canonical bundle such
that $h^0(\Omega^p
_X) = 0$ for $0 < p < n$ (since $h^{0,2}(X)=0$, $X$ is automatically a smooth projective variety by Kodaira embedding and Chow lemma), and a {\it hyperk\"ahler variety} is a simply connected smooth projective variety $Y$ such that $H^0(Y, \Omega^2_Y)$ is spanned by a non-degenerate two form.

For hyperk\"ahler varieties, the only known examples are (up to deformations): Hilbert schemes of points on K3 surfaces, generalized Kummer varieties (due to Beauville's construction \cite{beauville}), and two examples in dimension $6$ and $10$ introduced by O'Grady \cite{ogrady1, ogrady2}. As the first main theorem, we can verify Conjecture \ref{kawamata conj} for all hyperk\"{a}hler varieties of dimension up to six:

\begin{theorem}[= Corollary \ref{verification for six dim}]
Conjecture \ref{kawamata conj} is true for all hyperk\"{a}hler varieties of dimension at most $6$. 
\end{theorem}

As there are many known results on known hyperk\"ahler varieties, we can verify Conjecture \ref{kawamata conj} for those varieties (except for  O'Grady 10-fold) by easy computation, see Proposition \ref{verification for beauville} for more details.
\begin{exprop}[= Proposition \ref{verification for beauville}]
Conjecture \ref{kawamata conj} is true for the following cases:
\begin{enumerate}
\item hyperk\"{a}hler varieties homeomorphic to Hilbert schemes of points on K3 surfaces;
\item hyperk\"{a}hler varieties homeomorphic to generalized Kummer varieties;
\item hyperk\"{a}hler varieties homeomorphic to O'Grady's $6$-dimensional example. 
\end{enumerate}
\end{exprop}

For Calabi--Yau varieties, we have lots of examples provided by smooth complete intersections in Fano manifolds.
To state our result, we introduce a subclass of Fano manifolds, which we call \emph{perfect} Fano manifolds, i.e. on which any nef line bundle has a nontrivial section (Definition \ref{def of perfect Fano}). This subclass contains, for example, toric Fano manifolds, Fano manifolds of dimension $n\leq4$, and their products (see Proposition \ref{prop on perfect Fano}). On the other hand, Kawamata's Effective Non-vanishing Conjecture predicts that every Fano manifold is perfect. Then we show that Conjecture \ref{kawamata conj} is true for general complete intersection Calabi--Yau varieties in perfect Fano manifolds.
\begin{theorem}[{=Theorem \ref{verification for CICY}}]
Let $Y$ be a perfect Fano manifold, $\{H_{i}\}_{i=1}^{m}$ a sequence of base-point-free ample line bundles on $Y$ such that $\otimes_{i=1}^{m}H_{i}\cong K_{Y}^{-1}$. Let $X\subseteq Y$ be a general complete intersection of dimension $n\geq3$ defined by common zero locus of sections of $\{H_{i}\}_{i=1}^{m}$.
Then Conjecture \ref{kawamata conj} is true for $X$.
\end{theorem}
Here ``general" means that if we label the sections by $\{s_{i}\}_{i=1}^{m}$, we require $\{s_{1}=s_{2}=\cdots=s_{i}=0\}$ are smooth for $i=1,2,...,m$.

By Hirzebruch--Riemann--Roch formula, the existence of global sections is related to the effectivity of Todd classes. We define the following quasi-effectivity for cycles.
\begin{definition}Let $X$ be a smooth projective variety and $\gamma$ an algebraic $k$-cycle. $\gamma$ is said to be {\it quasi-effective} if the intersection number $\gamma\cdot L^{k}\geq 0$ for any nef line bundle $L$.
\end{definition}
We remark that to test quasi-effectivity, it suffices to check for only ample (or nef and big) line bundles $L$ since nef line bundles can be viewed as ``limits" of ample line bundles \cite{lazarsfeld}.

As explained above, we will consider the following question.
\begin{question}\label{question}
Are Todd classes of a Calabi--Yau variety or a hyperk\"ahler variety quasi-effective?\end{question}
It is well-known that the second Todd class of a smooth projective variety with $c_1=0$ is quasi-effective (which is equivalent to pseudo-effectivity in this case) by the result of Miyaoka and Yau \cite{miyaoka, yau0}.
We consider higher Todd classes and answer this question affirmatively in the following cases.
\vspace{7pt}
\begin{theorem}[{=Theorem \ref{effective of Td4}, Proposition \ref{verification for beauville}, and Theorem \ref{CICY td4}}]~
\begin{enumerate}
\item The fourth Todd classes of all hyperk\"ahler varieties are quasi-effective;
\item All Todd classes of hyperk\"ahler varieties homeomorphic to any known hyperk\"ahler variety, except for O'Grady's 10-dimensional example remaining unknown, are quasi-effective;
\item The fourth Todd classes of general complete intersection Calabi--Yau in products of projective spaces are quasi-effective.
\end{enumerate}
\end{theorem}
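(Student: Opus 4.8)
The plan is to reduce all three statements to a numerical positivity and then supply that positivity by the structural input available in each case. Since $c_1(X)=0$ forces $\td_1=\td_3=0$ and $\td_4=\tfrac{1}{720}\bigl(3c_2(X)^2-c_4(X)\bigr)$, and since by the remark after the definition of fake effectivity it suffices to test nef and big (hence, after a limiting argument, Kähler) classes, in every case the goal becomes to show that $\int_X\td_4\cdot\alpha^{\dimm X-4}\ge 0$ for $\alpha=c_1(L)$ nef and big.

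For a hyperkähler variety $X$ of complex dimension $2m$, the isomorphism $T_X\cong\Omega^1_X$ induced by the symplectic form kills all odd Chern classes, so $\td_4\in H^8(X,\bR)$ is monodromy invariant. First I would apply the generalized Fujiki relation, which for a monodromy-invariant class $\gamma\in H^{2k}(X,\bR)$ produces a constant $c(\gamma)$ with $\int_X\gamma\cdot\alpha^{2m-k}=c(\gamma)\,q(\alpha)^{m-k/2}$, where $q$ is the Beauville--Bogomolov form. Applied to $\gamma=\td_4$ this yields $\int_X\td_4\cdot\alpha^{2m-4}=c(\td_4)\,q(\alpha)^{m-2}$; since a nef and big class has $q(\alpha)>0$, part (1) reduces to the single inequality $c(\td_4)\ge 0$. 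To establish it I would fix the Ricci-flat Kähler metric in the class $\alpha$ provided by Yau's theorem, represent $\td_4$ by its Chern--Weil form, and use that the curvature of a hyperkähler metric is a section of the fourth symmetric power of the standard $\mathrm{Sp}(m)$-representation; the task is then to show that the top-degree form $(3c_2^2-c_4)\wedge\omega^{2m-4}$ is a pointwise nonnegative multiple of the volume form. This representation-theoretic positivity is the main obstacle: one must rewrite the relevant quadratic expression in the curvature as a sum of squares using $\mathrm{Sp}(m)$-invariant theory.

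For part (2) the deformation types are known explicitly, so I would instead compute. Using the generalized Fujiki relation in every even degree reduces fake effectivity of the full Todd class to the nonnegativity of the constants $c(\td_{2p})$ for all $p$, equivalently to the nonnegativity of all coefficients of the Riemann--Roch polynomial $P$ defined by $\chi(X,L)=P(q(c_1(L)))$. For Hilbert schemes of points on K3 surfaces and for generalized Kummer varieties, $P$ is given by an explicit binomial expression whose roots are all real and negative, so every coefficient is positive; for O'Grady's six-dimensional example one checks the finitely many constants directly from the known characteristic numbers. Here the only difficulty is bookkeeping, together with recording the root structure that forces positivity for the two infinite families.

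For part (3), let $X$ be a general complete intersection Calabi--Yau of dimension $n\ge 3$ in $\PPP^{n_1}\times\cdots\times\PPP^{n_r}$. By the Lefschetz hyperplane theorem the Picard group of $X$ is generated by the restrictions $h_1,\dots,h_r$ of the hyperplane classes, and its nef cone is spanned by $h_1,\dots,h_r$; expanding $(\sum_j\lambda_j h_j)^{n-4}$ with $\lambda_j\ge 0$ reduces the problem to showing $\int_X\td_4\cdot h_1^{a_1}\cdots h_r^{a_r}\ge 0$ for all $a_1,\dots,a_r\ge 0$ with $a_1+\cdots+a_r=n-4$. I would compute $c_2(X)$ and $c_4(X)$ from the conormal sequence together with the Euler sequences of the factors, impose the Calabi--Yau condition $\sum_i d_{ij}=n_j+1$, push the intersection numbers forward to the ambient product via $[X]=\prod_i(\sum_j d_{ij}h_j)$, and evaluate against the monomial basis of $H^*(\PPP^{n_1}\times\cdots\times\PPP^{n_r})$. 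This turns each required inequality into the nonnegativity of an explicit polynomial in the $n_j$ and the multidegrees $d_{ij}$; the main obstacle is to prove this positivity uniformly in the number of factors, the dimensions, and the degrees, which I would attempt by exhibiting $3c_2^2-c_4$ as a combination of manifestly nonnegative monomials so that the inequality holds term by term.
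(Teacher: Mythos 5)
Your part (2) follows the paper's own route: the Fujiki relation reduces fake effectivity of all Todd classes to nonnegativity of the coefficients of the Riemann--Roch polynomial, which is then read off from the Britze--Nieper and Ellingsrud--G\"ottsche--Lehn binomial formulas and, for O'Grady's six-dimensional example, from the Mongardi--Rapagnetta--Sacc\`a Chern numbers; this is fine, modulo making explicit that these polynomials are topological (not just deformation) invariants, which is what lets one pass to varieties merely homeomorphic to the known examples. The other two parts have genuine gaps. In part (1), after the correct Fujiki reduction to $C(\td_4)\ge 0$, your entire proof rests on a pointwise statement you explicitly leave open: that the Chern--Weil representative of $(3c_2^2-c_4)\wedge\omega^{2n-4}$ for a Ricci-flat hyperk\"ahler metric is pointwise nonnegative, to be established by an $\mathrm{Sp}(n)$-invariant sum-of-squares identity. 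No such pointwise positivity is known, and nothing in your proposal supplies it, so this is a restatement of the difficulty rather than a proof. The paper never works pointwise: it splits $2880\,\td_4=(7c_2^2-4c_4)+5c_2^2$ and proves the two integrated inequalities by different global tools --- $\int_X(7c_2^2-4c_4)\cdot L^{2n-4}>0$ follows from Nieper's formula $\int_X\sqrt{\td(X)}\exp(\alpha)=(1+\lambda(\alpha))^n\int_X\sqrt{\td(X)}$ combined with Hitchin--Sawon's theorem $\int_X\sqrt{\td(X)}>0$, while $\int_Xc_2^2\cdot L^{2n-4}>0$ (Lemma \ref{lemma c22}) uses the orthogonal decomposition $c_2=\mu Q+p$ with $\mu>0$, the Hodge--Riemann bilinear relations applied to the primitive part $p$, and an explicit orthonormal-basis computation showing $C(Q^2)>0$. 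You would need to replace your representation-theoretic step by global inputs of this kind.

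In part (3), your setup (identify the nef cone of $X$ with that of the ambient product, push intersection numbers forward via $[X]=\prod_\alpha(\sum_s q_\alpha^s H_s)$, reduce to polynomial inequalities in the $n_j$ and the multidegrees) matches the paper, but your proposed conclusion --- exhibiting $3c_2^2-c_4$ as a combination of manifestly nonnegative monomials so that the inequality holds ``term by term'' --- cannot work. Lemma \ref{B geq} of the paper shows that the expansion $2880\,\td_4=\sum B^{rstu}J_rJ_sJ_tJ_u$ genuinely has negative coefficients: $B^{rrrr}=-4$ when row $r$ of the configuration matrix is $(2,1,\dots,1)$, and $B^{rrrs}<0$ when row $r$ is $(1,\dots,1)$. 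Positivity holds only after exploiting the truncation relation $H_r^{n_r+1}=0$ in the ambient cohomology: the paper groups each negative term with compensating positive ones (Lemmas \ref{Brrrr} and \ref{Brrrs}) and shows that the grouped cycle becomes, after adding the vanishing class $H_r^{K+1}$, a polynomial in the $H_s$ with nonnegative coefficients (Lemma \ref{positive polynomial}). This trick of adding a class that vanishes in cohomology to repair the signs is the key idea your plan is missing; without it, the term-by-term approach fails in exactly the configurations listed above.
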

We also prove a weaker version of Conjecture \ref{kawamata conj} which relates to a conjecture of Beltrametti and Sommese (see H\"{o}ring's work \cite{horing}).
\begin{theorem}[{=Theorems \ref{HRR for odd CY}, \ref{HRR for even CY}}]
Fix an integer $n\geq 2$. Let $X$ be an $n$-dimensional smooth projective variety with $c_1(X)=0$ in $H^2(X, \bR)$ and $L$ a nef and big line bundle on $X$.
Then there exists a positive integer $i\leq \rounddown{\frac{n-1}{4}}+\rounddown{\frac{n+2}{4}}$ such that $H^{0}(X,L^{\otimes i})\ne 0$.

In particular, Conjecture \ref{kawamata conj} is true in dimension $\leq4$.
\end{theorem}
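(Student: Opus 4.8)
The plan is to compute $\chi(X,L^{\otimes i})$ by Hirzebruch--Riemann--Roch and to exploit the strong constraints that $c_1(X)=0$ imposes on this Euler characteristic. Writing $\ell=c_1(L)$, set
\[
P(i):=\chi(X,L^{\otimes i})=\int_X e^{i\ell}\,\td(X)=\sum_{k=0}^{n}\frac{i^{k}}{k!}\int_X \ell^{k}\,\td_{n-k}(X),
\]
a polynomial in $i$ of degree $n$. Since $c_1(X)=0$, the odd Todd classes $\td_{2k+1}(X)$ all vanish, so only indices $k\equiv n\pmod 2$ survive; thus $P$ is an odd (resp. even) polynomial when $n$ is odd (resp. even). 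I would then record the three coefficients whose sign I can control: the leading one $\tfrac{1}{n!}L^{n}>0$ by bigness; the next one $\tfrac{1}{(n-2)!}\,\td_2(X)\cdot L^{n-2}\ge 0$ by fake effectivity of $\td_2$, i.e. Miyaoka--Yau \cite{miyaoka, yau0}; and, when $n$ is even, the constant term $P(0)=\chi(X,\OO_X)\ge 0$, which I would deduce from the Beauville--Bogomolov decomposition \cite{beauville, bogomolov} (a finite \'etale cover splits as a product of a torus, Calabi--Yau, and hyperk\"ahler factors, each of non-negative holomorphic Euler characteristic, while $\chi(\OO)$ is multiplicative and divides by the covering degree).

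The second step converts $P$ into genuine non-vanishing of sections. Because $K_X$ is numerically trivial, $L^{\otimes i}\otimes K_X^{-1}$ is numerically equivalent to $L^{\otimes i}$, hence nef and big for every $i\ge 1$; so Kawamata--Viehweg vanishing gives $H^{j}(X,L^{\otimes i})=0$ for all $j>0$. Therefore $P(i)=h^{0}(X,L^{\otimes i})\ge 0$ for every integer $i\ge 1$, and it suffices to exhibit some $i\le N:=\rounddown{\frac{n-1}{4}}+\rounddown{\frac{n+2}{4}}$ with $P(i)\neq 0$.

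The heart of the argument is then purely about polynomials, and I would argue by contradiction: assume $P(1)=\cdots=P(N)=0$. Substituting $t=i^{2}$ I write $P(i)=i\,R(t)$ with $\deg R=\tfrac{n-1}{2}$ when $n$ is odd, and $P(i)=S(t)$ with $\deg S=\tfrac n2$ when $n$ is even; the assumption says $R$ (resp. $S$) vanishes at $t=1,4,\dots,N^{2}$. One checks $N=\tfrac{n-1}{2}$ for $n$ odd and $N=\tfrac n2$ for $n\equiv 2\pmod 4$ both equal the relevant degree, so $R$ (resp. $S$) is forced to equal $\tfrac{1}{n!}L^{n}\prod_{j=1}^{N}(t-j^{2})$, whose subleading coefficient is $-\tfrac{1}{n!}L^{n}\sum_{j}j^{2}<0$; this contradicts the non-negativity of the $\td_2$-coefficient, and the proof is complete in these residue classes.

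The case $n\equiv 0\pmod 4$ is where I expect the real obstacle, since the theorem only permits $N=\tfrac n2-1$, one below $\deg S$. Here $S(t)=\tfrac{1}{n!}L^{n}(t-c)\prod_{j=1}^{N}(t-j^{2})$ carries an extra free root $c$, and the constraint on the subleading coefficient forces only $c\le-\sum_{j=1}^{N}j^{2}<0$ rather than an immediate contradiction. To close the gap I would evaluate the constant term $S(0)$; using that $\tfrac n2$ is even, its sign comes out strictly negative, contradicting $\chi(X,\OO_X)\ge 0$. This is precisely why the input $\chi(\OO_X)\ge 0$ is needed only in this residue class and why the bound sharpens to $\tfrac n2-1$ there; managing this parity-sensitive bookkeeping is the delicate part. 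Finally, for $n\le 4$ one has $N=1$, so $H^{0}(X,L)\neq0$, which is Conjecture \ref{kawamata conj} in those dimensions.
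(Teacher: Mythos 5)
Your proof is correct and takes essentially the same route as the paper's: Hirzebruch--Riemann--Roch plus Kawamata--Viehweg vanishing converts the assumed vanishings into roots of the odd/even Hilbert polynomial, the Miyaoka--Yau sign of the $\td_2$-coefficient kills the odd and $n\equiv 2\pmod 4$ cases, and $\chi(X,\OO_X)\ge 0$ from the Beauville--Bogomolov decomposition closes the $n\equiv 0\pmod 4$ case exactly as in the paper's treatment of dimension $4k+4$. The only cosmetic difference is your substitution $t=i^2$ into half-degree polynomials, where the paper factors $f(t)$ directly as $\alpha\,t^{\epsilon}\prod_j(t^2-j^2)$ (times $(t^2-\beta)$ in the last case).
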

Finally, we remark that one could also consider the K\"{a}hler version of Conjecture \ref{kawamata conj} and Question \ref{question}. However, they can be simply reduced to the projective case. As for Conjecture \ref{kawamata conj}, the existence of a nef and big line bundle on a compact complex manifold forces the manifold to be Moishezon (e.g. \cite[Theorem 2.2.15]{ma}), and hence projective since it is also K\"{a}hler. For Question \ref{question}, if we consider a non-projective hyperk\"ahler manifold $X$, then $q_X(L) =0$ for every nef line bundle $L$ (\cite[Theorem 3.11]{huybrechts}) and hence the intersection numbers of nef line bundles with Todd classes are identically zero by Fujiki's result \cite{fujiki} (see Theorem \ref{fujiki result}).

\vspace{1em}

\textbf{Acknowledgement.} The second author is grateful to Professor Keiji Oguiso for discussions. Part of this paper was written when the second author was visiting National Center for Theoretical Sciences and he would like to thank Professors Jungkai Chen and Ching-Jui Lai for their hospitality. The authors are grateful to the referees for many useful suggestions which improve the presentation of this paper.

\section{Reduction to Calabi--Yau and hyperk\"{a}hler varieties}
Recall the following Beauville--Bogomolov decomposition thanks to Yau's proof of
Calabi conjecture \cite{yau}. 
\begin{theorem}[{Beauville--Bogomolov decomposition, \cite{beauville, bogomolov}}]\label{classification of CY}Every smooth projective variety $X$ with $c_1(X)=0$ in $H^2(X, \bR)$ admits an \'etale finite cover isomorphic to a product of Abelian varieties, Calabi--Yau varieties, and hyperk\"{a}hler varieties.
\end{theorem}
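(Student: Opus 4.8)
The plan is to deduce this projective statement from the classical compact-K\"ahler Beauville--Bogomolov decomposition \cite{beauville, bogomolov}, whose analytic input is Yau's solution of the Calabi conjecture \cite{yau}, and then to upgrade each factor from the K\"ahler to the projective category. First I would record that, since $X$ is projective (hence K\"ahler) and $c_1(X)=0$ in $H^2(X,\bR)$, Yau's theorem produces a Ricci-flat K\"ahler metric on $X$; passing to the universal cover and applying the de Rham splitting theorem decomposes it isometrically into a flat factor and irreducible Ricci-flat K\"ahler factors. Berger's holonomy classification identifies the irreducible factors as those with holonomy $SU(m)$ or $Sp(k)$, while the flat factor, which becomes a compact torus after a finite \'etale cover by the Cheeger--Gromoll splitting together with Bieberbach's theorem, is a complex torus. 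This is exactly the content of the cited decomposition: there is a finite \'etale cover $\tilde X \to X$ with $\tilde X$ isomorphic to a product $T\times \prod_i V_i \times \prod_j W_j$ of a complex torus $T$, compact K\"ahler manifolds $V_i$ of holonomy $SU$, and irreducible holomorphic symplectic manifolds $W_j$ of holonomy $Sp$.

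The remaining point is to see that every factor is projective and matches the algebraic definitions used in this paper. A finite \'etale cover of a projective variety is projective, so $\tilde X$ carries an ample line bundle $A$. Restricting $A$ to a slice $\{\mathrm{pt}\}\times\cdots\times V_i\times\cdots\times\{\mathrm{pt}\}$ (respectively to a $W_j$- or $T$-slice) yields an ample line bundle on that factor, so each $V_i$ and $W_j$ is projective and $T$, admitting an ample line bundle, is an abelian variety. The Hodge-theoretic translation of Berger's list then shows that the $SU$-factors satisfy $h^0(V_i,\Omega^p_{V_i})=0$ for $0<p<\dim V_i$, i.e. they are Calabi--Yau varieties in the sense of this paper, and that the $Sp$-factors are simply connected with $H^0(W_j,\Omega^2_{W_j})$ spanned by a non-degenerate two-form, i.e. hyperk\"ahler varieties.

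I expect the main subtlety to lie not in the deep analytic input, which is supplied ready-made by Yau's theorem and Berger's classification, but in this last bookkeeping: verifying that the differential-geometric factors are genuinely projective and that their holonomy types correspond precisely to the algebraic notions of abelian, Calabi--Yau, and hyperk\"ahler varieties fixed in the introduction. Once this dictionary is in place the decomposition follows immediately.
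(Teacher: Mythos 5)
The paper offers no proof of this statement at all---it is quoted directly from Beauville and Bogomolov---and your sketch reproduces precisely the argument of those cited sources (Yau's Ricci-flat metric, de Rham splitting of the universal cover, Cheeger--Gromoll plus Bieberbach for the torus part, Berger's holonomy list for the irreducible factors), together with the correct projectivity bookkeeping (ample bundle on the finite cover restricted to slices) needed to match the paper's algebraic definitions of abelian, Calabi--Yau, and hyperk\"ahler varieties. The one convention you should make explicit is the coincidence $SU(2)=Sp(1)$: an irreducible two-dimensional factor is a K3 surface and must be counted among the hyperk\"ahler factors, since the paper's definition of a Calabi--Yau variety requires dimension at least $3$; with that convention your dictionary is exact and the proof is complete.
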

Then it is easy to reduce Conjecture \ref{kawamata conj} to the cases of Calabi--Yau varieties and hyperk\"{a}hler varieties.
\begin{proposition}\label{reduce to irreducible}
Conjecture \ref{kawamata conj} is true if it holds for all Calabi--Yau varieties and hyperk\"{a}hler varieties.
\end{proposition}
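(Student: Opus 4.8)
The plan is to combine the Beauville--Bogomolov decomposition (Theorem \ref{classification of CY}) with Kawamata--Viehweg vanishing, exploiting the fact that the latter converts the existence of a section into the positivity of a \emph{numerical} invariant, which in turn behaves well under finite \'etale covers and products. First I would observe that, since $c_1(X)=0$ in $H^2(X,\bR)$, the canonical bundle $K_X$ is numerically trivial, so $L-K_X$ is numerically equivalent to $L$ and hence again nef and big. Kawamata--Viehweg vanishing applied to $L=K_X+(L-K_X)$ then gives $H^i(X,L)=0$ for $i>0$, so that $h^0(X,L)=\chi(X,L)$. It therefore suffices to prove $\chi(X,L)>0$, and the point is that $\chi(X,L)=\int_X\ch(L)\,\td(X)$ depends only on numerical data.

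Next I would apply Theorem \ref{classification of CY} to obtain a finite \'etale cover $\pi\colon \tilde X\to X$ with $\tilde X\cong T\times\prod_j C_j\times\prod_k H_k$, where $T$ is an abelian variety (the product of all abelian factors), the $C_j$ are Calabi--Yau, and the $H_k$ are hyperk\"ahler. Because $\pi$ is \'etale we have $\td(\tilde X)=\pi^*\td(X)$, whence $\chi(\tilde X,\pi^*L)=\deg(\pi)\cdot\chi(X,L)$, so it is enough to prove $\chi(\tilde X,\pi^*L)>0$. This is precisely the step that dissolves the usual difficulty that sections on a cover need not descend: only the multiplicative invariant $\chi$ is involved, not the sections themselves.

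To compute $\chi$ on the product I would argue that $c_1(\pi^*L)$ has no K\"unneth cross terms between distinct factors. Indeed $H^1(C_j)=H^1(H_k)=0$ (the $C_j,H_k$ are simply connected or satisfy $h^{0,1}=0$), and all cross terms among the abelian factors have been absorbed into the single factor $T$. Writing $M_T,M_{C_j},M_{H_k}$ for the restrictions of $\pi^*L$ to the respective fibers, one gets $c_1(\pi^*L)\equiv p_T^*c_1(M_T)+\sum_j p_{C_j}^*c_1(M_{C_j})+\sum_k p_{H_k}^*c_1(M_{H_k})$. Each restriction is nef, and expanding $(\pi^*L)^{\dim\tilde X}$ by the multinomial theorem (only the balanced monomial survives, the rest vanishing for degree reasons) shows each is also big. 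Since $\td$ of a product is the external product of the Todd classes, HRR factorizes as $\chi(\tilde X,\pi^*L)=\chi(T,M_T)\cdot\prod_j\chi(C_j,M_{C_j})\cdot\prod_k\chi(H_k,M_{H_k})$. Each factor has trivial canonical bundle and each $M$ is nef and big, so Kawamata--Viehweg gives $\chi=h^0$ on every factor: for $T$ one has $\chi(T,M_T)=M_T^{\dim T}/(\dim T)!>0$ directly from bigness (no hypothesis needed), while for the $C_j$ and $H_k$ positivity of $h^0$ is exactly the assumed validity of Conjecture \ref{kawamata conj}. Multiplying yields $\chi(\tilde X,\pi^*L)>0$, hence $\chi(X,L)>0$ and $h^0(X,L)\neq0$.

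\textbf{Main obstacle.} The genuine subtlety is the descent: a nonzero section on $\tilde X$ need not be invariant under the deck transformations, so it need not descend to a section on $X$, and the naive injection $H^0(X,L)\hookrightarrow H^0(\tilde X,\pi^*L)$ runs in the wrong direction. The entire strategy is designed to bypass this by reducing to the numerical invariant $\chi$, which descends automatically through $\chi(\tilde X,\pi^*L)=\deg(\pi)\cdot\chi(X,L)$. The only remaining technical care is in justifying the absence of K\"unneth cross terms (handled by grouping the abelian factors and using $h^{0,1}=0$ of the others) and the bigness of each factor bundle.
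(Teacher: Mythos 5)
Your proposal is correct and follows the same overall strategy as the paper: Beauville--Bogomolov decomposition, Kawamata--Viehweg vanishing to convert non-vanishing of $H^0$ into positivity of $\chi$, multiplicativity of $\chi$ under the finite cover to sidestep the descent problem (which the paper handles in exactly the same way, without comment), and a factor-by-factor reduction in which the abelian factor is handled directly and the Calabi--Yau and hyperk\"ahler factors by hypothesis. The one place you diverge is the factorization step: the paper invokes $\Pic(X')\cong \Pic(A)\times\prod_i\Pic(X_i)\times\prod_j\Pic(Y_j)$ (Hartshorne, Ex.\ III.12.6, using $h^1(\mathcal{O})=0$ of the non-abelian factors), so that $\pi^*L$ is literally a box tensor and $h^0$ multiplies over the factors; you instead decompose only the class $c_1(\pi^*L)$ via the K\"unneth formula on $H^2$ (same vanishing of $H^1$ doing the work) and factorize $\chi$ through Hirzebruch--Riemann--Roch. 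Both are valid; the paper's route is more direct, while yours has the mild advantage of needing only numerical data, and you additionally make explicit two points the paper labels as obvious --- the multinomial argument showing each restriction of $\pi^*L$ is big, and the computation $\chi(T,M_T)=M_T^{\dim T}/(\dim T)!>0$ on the abelian factor.
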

\begin{proof}
Let $X$ be a smooth projective variety with $c_1(X)=0$ in $H^2(X, \bR)$ and $L$ a nef and big line bundle on $X$. By Beauville--Bogomolov decomposition, there exists an \'etale finite cover $\pi:X^\prime\rightarrow X$ such that
$$
X^\prime \cong A\times X_1\times \cdots \times X_n\times Y_1\times \cdots \times Y_m,
$$
where $A$ is an Abelian variety, $X_i$ a Calabi--Yau variety, and $Y_j$ a hyperk\"{a}hler variety. After pull-back $L$ to $X^\prime$,
$$\chi(X^\prime,\pi^{*}L)=\deg(\pi)\cdot \chi(X,L). $$
and $\pi^{*}L$ is again nef and big on $X^\prime$. Kawamata--Viehweg
vanishing theorem (\cite{kawamata1}) implies
\begin{align*}
h^{0}(X^\prime,\pi^{*}L)={}&\chi(X^\prime,\pi^{*}L),\\
h^{0}(X,L)={}&\chi(X,L).
\end{align*}
Hence to prove $h^{0}(X,L)\neq 0$, it is equivalent to prove $h^{0}(X^\prime,\pi^{*}L)\neq 0$.
On the other hand, since by definition $h^1(X_i, \OO_{X_i})=h^1(Y_j, \OO_{Y_j})=0$ for all $i,j$, by \cite[Ex III.12.6]{hart}, there is a natural isomorphism
$$\Pic(X^\prime)\cong \Pic(A)\times\prod^{n}_{i=1}\Pic(X_{i})\times\prod^{m}_{j=1}\Pic(Y_{j}),$$ i.e.
a line bundle on $X^\prime$ is the box tensor of its restriction on each factors. Since the restriction of $\pi^{*}L$ on each factor is obviously nef and big, to prove $h^{0}(X^\prime,\pi^{*}L)\neq 0$, it suffices to show for any nef and big line bundle on each factor, there exists
a nontrivial global section, that is, to verify Conjecture \ref{kawamata conj} for Abelian varieties, Calabi--Yau varieties, and hyperk\"{a}hler varieties.
Note that Conjecture \ref{kawamata conj} is true for Abelian varieties by Kodaira vanishing and Hirzebruch--Riemann--Roch formula.
\end{proof}

\section{Hyperk\"{a}hler case}
\subsection{Preliminaries}Let $X$ be a hyperk\"ahler variety. Beauville \cite{beauville} and Fujiki \cite{fujiki} proved that there exists a quadratic form $q_X: H^2(X, \mathbb{C}) \to \mathbb{C}$ and a constant $c_X\in \mathbb{Q}_+$ such that for all $\alpha\in H^2(X, \mathbb{C})$,
$$\int_X\alpha^{2n}= c_X\cdot q_X(\alpha)^n. $$
The above equation determines $c_X$ and $q_X$ uniquely if assuming:
\begin{enumerate} 
\item $q_X$ is a primitive integral quadratic form on $H^2(X, \ZZ)$;
\item $q_X(\sigma + \overline{\sigma}) > 0$ for $0\neq \sigma \in H^{2,0}(X)$.
\end{enumerate}
Here $q_X$ and $c_X$ are called the {\it Beauville--Bogomolov--Fujiki form} and the {\it Fujiki constant} of $X$ respectively.
Note that condition (2) above is equivalent to the following condition (see \cite[Propositions 8 and 11]{nieper0}):
\begin{enumerate}
\item[(2')] There exists an  $\alpha \in H^2(X,\bR)$ with $q_X(\alpha)\neq 0$, and for all $\alpha \in H^2(X,\bR)$
with $q_X(\alpha)\neq 0$, we have that
 $$   
\frac{\int_X \pp_1(X)\alpha^{2n-2} }{q_X(\alpha)^{n-1}}< 0.$$
\end{enumerate}
Here $\pp_1(X)$ is the first Pontrjagin class. By the above definition, it is easy to see that $q_X$ and $c_X$ are actually topological invariants.

Recall a result by Fujiki \cite{fujiki} (see also \cite[Corollary 23.17]{gross} for a generalization).
\begin{theorem}[{\cite{fujiki}, \cite[Corollary 23.17]{gross}}]\label{fujiki result}
Let $X$ be a hyperk\"ahler variety of dimension $2n$. Assume $\alpha\in H^{4j}(X,\mathbb{C})$ is of type $(2j, 2j)$ on all small deformation of $X$. Then there exists a constant $C(\alpha)\in\mathbb{C}$ depending on $\alpha$ such that
$$\int_{X}\alpha\cdot\beta^{2n-2j}=C({\alpha})\cdot q_{X}(\beta)^{n-j},$$
for all $\beta\in H^{2}(X,\mathbb{C})$.
\end{theorem}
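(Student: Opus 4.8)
The plan is to regard the left-hand side as a polynomial function on $H^2(X,\bC)$ and to prove that it is divisible by $q_X^{n-j}$; a comparison of degrees then forces the asserted proportionality. Concretely, I set $f(\beta)=\int_X\alpha\cdot\beta^{2n-2j}$, a homogeneous polynomial of degree $2(n-j)$ on the vector space $V=H^2(X,\bC)$, and I recall that $q_X^{n-j}$ is also homogeneous of degree $2(n-j)$. Since the Beauville--Bogomolov--Fujiki form is nondegenerate with $b_2(X)\geq 3$, the quadric $Q=\{q_X=0\}\subset V$ is a reduced irreducible hypersurface with ideal $(q_X)$, so it will suffice to show that $f$ vanishes to order at least $n-j$ along $Q$, i.e. that $q_X^{n-j}\mid f$.

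The input from deformation theory is the local Torelli theorem: the period map identifies the base of the Kuranishi family of $X$ with an analytically open, hence Zariski-dense, subset of $Q$, each point being the class $\sigma\in V$ spanning $H^{2,0}(X_\sigma)$ for a small deformation $X_\sigma$, so that $q_X(\sigma)=0$, $H^{2,0}(X_\sigma)=\bC\sigma$ and $H^{0,2}(X_\sigma)=\bC\bar\sigma$. By hypothesis $\alpha$ remains of type $(2j,2j)$ on every $X_\sigma$. I will use these period points as the dense family of points of $Q$ at which to test the order of vanishing.

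The heart of the argument is a Hodge-type computation at such a point. Fix a period point $\sigma$ and an arbitrary $\eta\in V$, and decompose $\eta=\lambda\sigma+\mu\bar\sigma+\omega$ on $X_\sigma$ with $\omega$ of type $(1,1)$ (using $h^{2,0}(X_\sigma)=1$). Expanding multinomially, each summand $\int_X\alpha\cdot\sigma^{2n-2j-k+a}\bar\sigma^{b}\omega^{c}$ with $a+b+c=k$ has integrand of holomorphic degree $4n-2j-2k+2a+c$; for it to be of type $(2n,2n)$ one needs $2a+c=2\bigl(k-(n-j)\bigr)$, which is impossible for $k<n-j$ since $a,c\geq 0$. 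Hence $\int_X\alpha\cdot\sigma^{2n-2j-k}\eta^{k}=0$ for all $k<n-j$ and all $\eta$, so the Taylor expansion $f(\sigma+t\eta)=\sum_k\binom{2n-2j}{k}t^{k}\int_X\alpha\cdot\sigma^{2n-2j-k}\eta^{k}$ is $O(t^{n-j})$. Thus the $(n-j-1)$-jet of $f$ vanishes at every period point, and by density along all of $Q$.

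Finally, vanishing of the $(n-j-1)$-jet at the generic smooth point of the reduced hypersurface $Q=\{q_X=0\}$ is equivalent to membership $f\in(q_X)^{n-j}$; since $\deg f=2(n-j)=\deg q_X^{n-j}$, this yields $f=C(\alpha)\cdot q_X^{n-j}$ for a constant $C(\alpha)$ (which one can pin down by restricting to the line $\sigma+t\bar\sigma$), proving the theorem. I expect the most delicate step to be the passage from pointwise jet-vanishing on the dense set of period points to the algebraic divisibility $q_X^{n-j}\mid f$: this uses that $Q$ is reduced and irreducible so that $q_X$ generates its ideal (whence $b_2\geq 3$ is needed), together with the standard equivalence between vanishing of the $(k-1)$-jet along a smooth hypersurface and membership in the $k$-th power of its ideal. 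A secondary point to verify with care is the density claim, namely that the image of the local period map is analytically open and therefore Zariski-dense in the irreducible quadric $Q$.
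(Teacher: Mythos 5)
The paper does not actually prove this statement: it is recalled as a known result with citations to \cite{fujiki} and \cite[Corollary 23.17]{gross}, so there is no internal proof to compare against. Your argument is correct and is essentially the standard proof from that cited reference — local Torelli identifying period points with a dense (open) subset of the irreducible quadric $\{q_X=0\}$, the Hodge-type computation killing the low-order jets of $\beta\mapsto\int_X\alpha\cdot\beta^{2n-2j}$ at period points, and then divisibility by $q_X^{n-j}$ together with the degree count — so it matches the source's approach rather than offering a new one.
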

 A direct application of this result (cf. \cite[1.11]{huybrechts}) is that, for a line bundle $L$ on $X$,
Hirzebruch--Riemann--Roch formula gives
$$\chi(X,L)=\sum_{i=0}^{2n}\frac{1}{(2i)!}\int_X\td_{2n-2i}(X)(c_{1}(L))^{2i}=\sum_{i=0}^{2n}\frac{a_{i}}{(2i)!}q_{X}\big(c_{1}(L)\big)^{i}, $$ 
where $$a_{i}=C(\td_{2n-2i}(X)).$$
Since rational Chern classes are determined by rational Pontrjagin classes (cf. \cite[Proposition 1.13]{nieper1}), rational Chern classes (and hence Todd classes) are topological invariants of $X$ by Novikov's theorem, hence $a_i$'s in the above formula  are constants depending only on the topology of $X$.

For a line bundle $L$ on $X$, Nieper \cite{nieper} defined the {\it characteristic value} of $L$,
$$
\lambda(L):=\begin{cases}\frac{24n\int_{X}\ch(L)}{\int_{X}c_{2}(X) \ch(L)} & \text{if well-defined;}\\ 0 & \text{otherwise.}\end{cases}
$$
Note that $\lambda(L)$ is a positive (topological constant) multiple of $q_X(c_1(L))$ (cf. \cite[Proposition 10]{nieper}), more precisely,
$$
\lambda(L)=\frac{12c_X}{(2n-1)C(c_2(X))}q_X(c_1(L)).
$$

It is easy to see that if $L$ is a nef line bundle, then $q_X(c_1(L))\geq 0$ and $\lambda(L)\geq 0$; if $L$ is a nef and big line bundle, then $q_X(c_1(L))>0$ and $\lambda(L)>0$ (cf. \cite[Corollary 6.4]{huybrechts}).

\subsection{Quasi-effectivity of $4$-th Todd classes of hyperk\"{a}hler varieties}
In this subsection, we prove the following thereom.
\begin{theorem}\label{effective of Td4}
Let $X$ be a hyperk\"{a}hler variety of dimension $2n$ with $n\geq 2$. Then $\td_4(X)$ is quasi-effective, that is,
$$\int_{X}\td_{4}(X)\cdot L^{2n-4}\geq 0$$
for any nef line bundle $L$ on $X$. Moreover, the inequality is strict for nef and big line bundle $L$.
\end{theorem}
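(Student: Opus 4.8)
The plan is to push everything onto the sign of a single topological constant by means of Fujiki's relation, and then to read that sign off from the Ricci-flat geometry.

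First I would carry out the Fujiki reduction. Since the rational Chern classes, and hence $\td_4(X)\in H^8(X,\mathbb{C})$, are of type $(4,4)$ on every small deformation of $X$, Theorem \ref{fujiki result} applies with $j=2$ and yields, for every $\beta\in H^2(X,\mathbb{C})$,
$$\int_X\td_4(X)\,\beta^{2n-4}=C(\td_4(X))\,q_X(\beta)^{n-2}.$$
Taking $\beta=c_1(L)$ and recalling that $q_X(c_1(L))\geq 0$ for nef $L$ and $q_X(c_1(L))>0$ for nef and big $L$, together with $n-2\geq 0$, the sign of $\int_X\td_4(X)\,L^{2n-4}$ is governed entirely by the topological constant $a_{n-2}:=C(\td_4(X))$; both assertions of the theorem follow at once once I show $a_{n-2}>0$.

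Next I would put $\td_4$ into Chern/Pontrjagin form. Because $c_1(X)=0$ forces $\td_1=\td_3=0$, the degree-four Todd class simplifies to
$$\td_4(X)=\tfrac{1}{720}\bigl(3c_2(X)^2-c_4(X)\bigr)=\hat A_8(X)=\tfrac{1}{5760}\bigl(7p_1(X)^2-4p_2(X)\bigr),$$
the middle equality being the identity $\td=\hat A$ valid when $c_1=0$. Thus $a_{n-2}>0$ is equivalent to $3C(c_2^2)>C(c_4)$, i.e. to $7C(p_1^2)>4C(p_2)$. The base case $n=2$ is immediate, since there $\int_X\td_4(X)=\chi(\OO_X)=n+1=3>0$, so $a_0>0$; the real content is the uniform positivity for all $n\geq 2$, which I would try to see metrically.

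The crux is to evaluate $a_{n-2}$, and this is where I expect the main obstacle. Using Yau's Ricci-flat Kähler form $\omega$, for which $q_X(\omega)>0$, I would compute $a_{n-2}=\int_X\td_4(X)\,\omega^{2n-4}/q_X(\omega)^{n-2}$. Substituting into $\td_4=\hat A_8=(7p_1^2-4p_2)/5760$ the Chern–Weil representatives built from the curvature $R$ of the Ricci-flat metric, whose holonomy lies in $Sp(n)$, turns the numerator into the integral of an $Sp(n)$-invariant quartic polynomial in $R$ wedged with $\omega^{2n-4}$. The plan is then to rewrite that integrand, following the curvature computations of Hitchin–Sawon, as a nonnegative combination of squared norms of curvature components, which would give $a_{n-2}>0$ uniformly in $n$. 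The hard part is precisely this positivity: unlike the pure top-degree characteristic numbers, here one must control the mixed expression carrying the extra $\omega^{2n-4}$ factor and show that the $Sp(n)$-representation-theoretic combination $7p_1^2-4p_2$ contracts to something manifestly nonnegative; an alternative, more algebraic route would be to decompose $c_2(X)$ along the Verbitsky component and its orthogonal complement and bound the resulting Fujiki constants, though controlling the signs there seems delicate. As a consistency check, for the $\mathrm{K3}^{[n]}$ deformation type the Riemann–Roch polynomial $\chi(X,L)=\sum_i\frac{a_i}{(2i)!}q_X(c_1(L))^i$ is, up to the normalization of $q_X$, the binomial $\binom{\frac12 q_X(c_1(L))+n+1}{n}$, a product of linear forms with negative roots, so every $a_i$—in particular $a_{n-2}=C(\td_4(X))$—is positive, in agreement with the claim.
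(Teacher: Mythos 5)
Your Fujiki reduction is correct and matches the paper's setup: since Todd classes stay of type $(p,p)$ under deformations, Theorem \ref{fujiki result} reduces everything to the positivity of the single topological constant $C(\td_4(X))$, and the nef/nef-and-big dichotomy then follows from $q_X(c_1(L))\geq 0$, resp. $>0$. But the proof has a genuine gap exactly where you say you expect one: the positivity $C(\td_4(X))>0$ is never established. Your plan is to contract the Chern--Weil representative of $7p_1^2-4p_2$ built from the Ricci-flat metric against $\omega^{2n-4}$ and exhibit the result as a sum of squared norms of curvature components "following Hitchin--Sawon". This is not something their work provides: Hitchin--Sawon's computation concerns the \emph{top-degree} characteristic number $\int_X\sqrt{\hat A(X)}$ (equivalently $\int_X\sqrt{\td(X)}$), with no residual K\"ahler-form factors, and no pointwise positivity of the mixed integrand $(7p_1^2-4p_2)\wedge\omega^{2n-4}$ is known or argued for here. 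Your fallback route (decomposing $c_2$ along the dual of $q_X$ in $\Sym^2H^2$) is essentially the paper's Lemma \ref{lemma c22}, but by itself it only controls the $c_2^2$ term, not $c_4$; the $n=2$ base case and the ${\rm K3}^{[n]}$ consistency check do not feed into any induction or deformation argument covering all topological types.

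For comparison, the paper closes this gap by splitting
$$\td_4(X)=\tfrac{1}{2880}\bigl(7c_2^2(X)-4c_4(X)\bigr)+\tfrac{1}{576}\,c_2^2(X)$$
and proving each summand is (strictly) fakely effective by a \emph{global} argument: the first via Nieper's identity $\int_X\sqrt{\td(X)}\exp(\alpha)=(1+\lambda(\alpha))^n\int_X\sqrt{\td(X)}$, applied to $\alpha=t\,c_1(L)$ and combined with Hitchin--Sawon's theorem $\int_X\sqrt{\td(X)}>0$ used as a black box; the second via Lemma \ref{lemma c22}, where $c_2=\mu Q+p$ with $\mu>0$, the Hodge--Riemann bilinear relations kill the sign ambiguity of the primitive part $p$, and an explicit orthonormal-basis computation shows $C(Q^2)>0$. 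The combination $7c_2^2-4c_4$ appears precisely because it is the degree-$8$ component of $\sqrt{\td(X)}$, which is what makes Nieper's formula applicable; your formulation in terms of $\hat A$ and Pontrjagin classes is equivalent algebraically, but the missing analytic positivity is replaced in the paper by these two cohomological/Rozansky--Witten-type inputs rather than by any curvature estimate.
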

Firstly, we prove the following lemma.
\begin{lemma}[{cf. \cite[Lemma 1]{kurnosov}, \cite[Lemma 3]{guan}}]\label{lemma c22}
Let $X$ be a hyperk\"{a}hler variety of dimension $2n$ with $n\geq 2$. Then $c_2^2(X)$ is quasi-effective, that is,
$$\int_{X}c_2^2(X)\cdot L^{2n-4}\geq 0$$
for any nef line bundle $L$ on $X$. Moreover, the inequality is strict for nef and big line bundle $L$.
\end{lemma}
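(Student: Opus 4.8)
The plan is to reduce the statement to the positivity of a single Fujiki constant and then to pin down the sign of that constant by a Lefschetz decomposition governed by the Hodge--Riemann bilinear relations.

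First I would record that $c_2(X)=c_2(T_X)$ is the second Chern class of the holomorphic tangent bundle, hence of Hodge type $(2,2)$ for every complex structure; in particular $c_2(X)$, and therefore $c_2^2(X)$, is of type $(2,2)$, resp.\ $(4,4)$, on all small deformations of $X$. Thus Theorem \ref{fujiki result} applies with $\alpha=c_2^2(X)$ and $j=2$, giving a constant $C(c_2^2(X))$ with
\[
\int_X c_2^2(X)\cdot\beta^{2n-4}=C(c_2^2(X))\cdot q_X(\beta)^{n-2}
\]
for all $\beta\in H^2(X,\mathbb{C})$. Setting $\beta=c_1(L)$ and using that $q_X(c_1(L))\ge 0$ for $L$ nef and $q_X(c_1(L))>0$ for $L$ nef and big, the whole statement (including the strict inequality, and the case $n=2$ where the exponent $n-2$ vanishes) follows once I show $C(c_2^2(X))>0$. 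Evaluating the displayed identity at a K\"ahler class $\omega$ (so $q_X(\omega)>0$), this is equivalent to $\int_X c_2^2(X)\cdot\omega^{2n-4}>0$.

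Next I would fix $\omega$ and take the Lefschetz decomposition $c_2(X)=\lambda\,\omega^2+\omega\wedge P_2+P_4$ with $\lambda\in\mathbb{R}$, $P_2\in H^{1,1}_{\mathrm{prim}}$ and $P_4\in H^{2,2}_{\mathrm{prim}}$. Expanding $\int_X c_2^2(X)\,\omega^{2n-4}$ and discarding the cross terms (they vanish by primitivity, since $\omega^{2n-1}\wedge P_2=0$ and $\omega^{2n-3}\wedge P_4=0$) one is left with
\[
\int_X c_2^2(X)\,\omega^{2n-4}=\lambda^2\int_X\omega^{2n}+\int_X P_2^2\,\omega^{2n-2}+\int_X P_4^2\,\omega^{2n-4}.
\]
Here the Hodge--Riemann relations give $\int_X P_4^2\,\omega^{2n-4}\ge 0$, and $\lambda>0$, since $\lambda=\big(\int_X c_2(X)\,\omega^{2n-2}\big)\big/\int_X\omega^{2n}=C(c_2(X))/(c_X\,q_X(\omega))$, where $C(c_2(X))>0$ follows from condition $(2')$ applied to $\pp_1(X)=-2c_2(X)$ via $\int_X\pp_1(X)\alpha^{2n-2}/q_X(\alpha)^{n-1}=-2C(c_2(X))<0$. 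The obstruction is the middle term $\int_X P_2^2\,\omega^{2n-2}$, which the Hodge--Riemann relations force to be $\le 0$.

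The main point, and the step I expect to be the crux, is therefore to show that this bad term is absent, i.e.\ that the primitive $(1,1)$-part $P_2$ vanishes. I would prove this by comparing two computations of $\int_X c_2(X)\cdot\alpha\cdot\omega^{2n-3}$ for $\alpha\in H^2(X,\mathbb{R})$. On one hand, polarizing Fujiki's relation $\int_X c_2(X)\beta^{2n-2}=C(c_2(X))q_X(\beta)^{n-1}$ gives $\int_X c_2(X)\,\alpha\,\omega^{2n-3}=C(c_2(X))\,q_X(\alpha,\omega)\,q_X(\omega)^{n-2}$. On the other hand, writing $\alpha=a\,\omega+\alpha_{\mathrm{prim}}$ (so that $a=q_X(\alpha,\omega)/q_X(\omega)$, using $q_X(\alpha_{\mathrm{prim}},\omega)=0$) and substituting the decomposition of $c_2(X)$ gives $\int_X c_2(X)\,\alpha\,\omega^{2n-3}=\lambda a\int_X\omega^{2n}+\int_X P_2\,\alpha_{\mathrm{prim}}\,\omega^{2n-2}$, whose first term already equals the Fujiki value. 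Hence $\int_X P_2\,\alpha_{\mathrm{prim}}\,\omega^{2n-2}=0$ for every $\alpha_{\mathrm{prim}}\in H^{1,1}_{\mathrm{prim}}$, and since the Hodge--Riemann form is nondegenerate (indeed negative definite) on $H^{1,1}_{\mathrm{prim}}$ this forces $P_2=0$. Then $\int_X c_2^2(X)\,\omega^{2n-4}=\lambda^2\int_X\omega^{2n}+\int_X P_4^2\,\omega^{2n-4}>0$, completing the argument; one even obtains the explicit bound $C(c_2^2(X))\ge C(c_2(X))^2/c_X$.
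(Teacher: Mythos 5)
Your proof is correct, but it takes a genuinely different route from the paper's. Both arguments start the same way (reduce to $C(c_2^2(X))>0$ via Theorem \ref{fujiki result}), but then diverge: the paper evaluates against $(\sigma\overline{\sigma})^{n-2}$ for the holomorphic symplectic form $\sigma$ and uses the hyperk\"ahler-specific decomposition $c_2=\mu Q+p$, where $Q\in\Sym^2H^2(X)$ is the dual of the Beauville--Bogomolov--Fujiki form, $\mu>0$ is taken from Nieper's thesis, the primitive part $p$ contributes non-negatively by Hodge--Riemann, and the proof is finished by an explicit orthonormal-basis computation of $C(Q^2)$ in terms of $b_2$, $n$ and $c_X$. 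You instead work with a K\"ahler class $\omega$ and the classical Lefschetz decomposition $c_2=\lambda\omega^2+\omega\wedge P_2+P_4$; your crux — killing the dangerous primitive $(1,1)$ component $P_2$ by comparing the polarized Fujiki identity $\int_X c_2\,\alpha\,\omega^{2n-3}=C(c_2)q_X(\alpha,\omega)q_X(\omega)^{n-2}$ with the direct expansion, then invoking negative definiteness of the Hodge--Riemann form on real primitive $(1,1)$ classes — has no counterpart in the paper, and all steps check out (including the implicit facts that Lefschetz primitivity with respect to $\omega$ coincides with $q_X$-orthogonality to $\omega$, and that $\omega^{2n-3}\wedge P_4=0$ also kills the $P_2P_4$ and $P_4\alpha_{\mathrm{prim}}$ cross terms; $\lambda>0$ follows correctly from condition (2') since $\pp_1(X)=-2c_2(X)$ here). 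What each approach buys: yours is more self-contained — it avoids the $\Sym^2H^2(X)\subset H^4(X)$ summand, Nieper's positivity of $\mu$, and the basis computation, using only the Fujiki relation and the standard K\"ahler package — and it yields the clean Cauchy--Schwarz-type bound $C(c_2^2(X))\geq C(c_2(X))^2/c_X$; the paper's argument, following Kurnosov and Guan, produces the explicit positive formula for $C(Q^2)$ involving $b_2$, which is finer quantitative information and is the form in which this lemma is usually cited.
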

\begin{proof}By Theorem \ref{fujiki result},
$$
\int_{X}c_2^2(X)\cdot L^{2n-4}=C(c_2^2(X))\cdot q_X(c_1(L))^{n-2}
$$
for a line bundle $L$.
Hence it is equivalent to prove that $C(c_2^2(X))>0$.

Fix $0\ne \sigma\in H^{2,0}(X)$, by Theorem \ref{fujiki result},
$$
\binom{2n-4}{n-2}\int_{X}c_2^2(X)\cdot (\sigma\overline{\sigma})^{n-2}=C(c_2^2(X))\cdot q_X(\sigma+\overline{\sigma})^{n-2}.
$$
Hence it is equivalent to prove that $\int_{X}c_2^2(X)\cdot (\sigma\overline{\sigma})^{n-2}>0$.

Take $Q\in \Sym^2H^2(X)$ to be the dual of Beauville--Bogomolov--Fujiki form $q_X$. Taking the orthogonal decomposition of $c_2\in H^4(X)$ induced by the projection to $\Sym^2H^2(X)$, we may write $c_2=\mu Q+p$, where $\mu>0$ is a positive rational number (cf. \cite[Proposition 12]{nieper0}) and $p\in H^4_{\text{prim}}(X)$. Since $p$ is a $(2,2)$-form, by the Hodge--Riemann bilinear relations,
\begin{align*}
\int_{X}c_2^2(X)\cdot (\sigma\overline{\sigma})^{n-2}={}&\mu^2\int_{X}Q^2\cdot (\sigma\overline{\sigma})^{n-2}+2\mu\int_{X}Qp\cdot (\sigma\overline{\sigma})^{n-2}+\int_{X}p^2\cdot (\sigma\overline{\sigma})^{n-2}\\
={}&\mu^2\int_{X}Q^2\cdot (\sigma\overline{\sigma})^{n-2}+\int_{X}p^2\cdot (\sigma\overline{\sigma})^{n-2}\\
\geq {}&\mu^2\int_{X}Q^2\cdot (\sigma\overline{\sigma})^{n-2}.
\end{align*}
Hence it suffices to show that $\int_{X}Q^2\cdot (\sigma\overline{\sigma})^{n-2}>0$. Since $q_X$ is a deformation invariant, so is $Q^2$. By Theorem \ref{fujiki result}, it suffices to show that $C(Q^2)>0$.

Let $e_1,\ldots, e_{b_2}$ be an orthonormal basis on $H^2(X,\bC)$ for which $Q = \sum_{i=1}^{b_2} e_i^2.$ Then for distinct integers $i,j,k$, and formal parameters $t,s$,
$$
\int_X (e_i+te_j+se_k)^{2n}=c_X\cdot q_X(e_i+te_j+se_k)^n=c_X\cdot (1+t^2+s^2)^n.
$$
Comparing the coefficients of $t, s$,
\begin{align*}
{}&\int_X e_i^{2n}=c_X,\\
{}&\int_X e_i^{2n-2}e_j^2=\frac{c_X}{2n-1},\\
{}&\int_X e_i^{2n-4}e_j^4=\frac{3c_X}{(2n-1)(2n-3)},\\
{}&\int_X e_i^{2n-4}e_j^2e_k^2=\frac{c_X}{(2n-1)(2n-3)}.
 \end{align*}
 Hence by Theorem \ref{fujiki result} and $q_X(e_1)=1$,
\begin{align*}
C(Q^2)=\int_XQ^2e_1^{2n-4}=c_X+\frac{2(b_2-1)c_X}{2n-1}+\frac{3(b_2-1)c_X}{(2n-1)(2n-3)}+\frac{(b_2-1)(b_2-2)c_X}{(2n-1)(2n-3)}>0.
\end{align*}
We complete the proof.
 \end{proof}
\begin{proof}[Proof of Theorem \ref{effective of Td4}]
By Nieper's formula (see \cite[Remark after Definition 19]{nieper} or \cite[Corollary 3.7]{nieper1}), for any $\alpha\in H^{2}(X)$,
\begin{equation}\int_{X}\sqrt{\td(X)}\exp(\alpha)=(1+\lambda(\alpha))^{n} \int_{X}\sqrt{\td(X)}.
\label{nieper formula} \end{equation}
Here$\sqrt{\td(X)}$ is defined to be the formal power series in cohomology ring whose square is $\td(X)$. Note that
$$
\sqrt{\td(X)}=1+\frac{1}{24}c_2(X)+\frac{1}{5760}(7c_2^2(X)-4c_4(X))+\cdots
$$
and $\int_{X}\sqrt{\td(X)}>0$ by a theorem of Hitchin and Sawon \cite{hitchinsawon}.

Fix a nef and big line bundle $L$ on $X$ (hence $\lambda(L)>0$), take $\alpha=t\cdot c_1(L)$ where $t$ is a formal parameter and compare the coefficients of $t$ in (\ref{nieper formula}), then
\begin{equation}\int_{X}\sqrt{\td(X)}\cdot L^{2n-4}=(2n-4)!\binom{n}{n-2}\lambda(L)^{n-2}\int_{X}\sqrt{\td(X)}>0.
\nonumber \end{equation}
Equivalently, this gives
\begin{equation}\int_{X}(7c_{2}^{2}(X)-4c_{4}(X))\cdot L^{2n-4}>0.
\nonumber \end{equation}
Combining with Lemma \ref{lemma c22},
\begin{equation}\int_{X}\td_{4}(X)\cdot L^{2n-4}=\frac{1}{2880}\int_{X}(7c_{2}^{2}(X)-4c_{4}(X))\cdot L^{2n-4}+\frac{1}{576}\int_{X}c_{2}^{2}(X)\cdot L^{2n-4}>0.  \nonumber \end{equation}
We complete the proof.
\end{proof}
\begin{corollary}\label{verification for six dim}
Let $X$ be a $6$-dimensional hyperk\"{a}hler variety and $L$ a nef and big line bundle on $X$. Then
\begin{equation}h^{0}(X,L)\geq 5. \nonumber \end{equation}
\end{corollary}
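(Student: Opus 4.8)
The plan is to convert the lower bound on $h^{0}$ into a positivity statement about a cubic polynomial through Hirzebruch--Riemann--Roch, and then extract the sharp bound from the constant term together with an integrality argument. Since $\dim X=6$ we have $2n=6$, i.e. $n=3$. As $K_X$ is trivial and $L$ is nef and big, we may write $L=K_X\otimes L$ with $L-K_X=L$ nef and big, so Kawamata--Viehweg vanishing (exactly as invoked in the proof of Proposition \ref{reduce to irreducible}) yields $h^i(X,L)=0$ for $i>0$ and hence $h^0(X,L)=\chi(X,L)$. It therefore suffices to bound $\chi(X,L)$ from below.

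Next I would apply the HRR--Fujiki expansion recorded in the preliminaries. Writing $q:=q_X(c_1(L))$, which satisfies $q>0$ because $L$ is nef and big, the formula specializes in dimension $6$ to
\begin{equation*}
\chi(X,L)=a_0+\frac{a_1}{2}q+\frac{a_2}{24}q^2+\frac{a_3}{720}q^3,\qquad a_i=C(\td_{6-2i}(X)).
\end{equation*}
The goal is then to show that the constant term equals exactly $4$ while the remaining coefficients are nonnegative with the leading one strictly positive. For the constant term, $a_0=C(\td_6(X))=\int_X\td_6(X)=\chi(X,\OO_X)$; for a hyperk\"ahler variety the Hodge numbers satisfy $h^{0,2k}=1$ for $0\leq k\leq n$ and $h^{0,\mathrm{odd}}=0$, so $\chi(X,\OO_X)=n+1=4$. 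For the leading term, $a_3=C(\td_0(X))=C(1)=c_X>0$ directly from the Beauville--Bogomolov--Fujiki relation.

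The two middle coefficients are where the earlier work enters. For $a_1$, Theorem \ref{fujiki result} gives $\int_X\td_4(X)\cdot L^{2}=C(\td_4(X))\,q^{\,n-2}=a_1 q$, and Theorem \ref{effective of Td4} asserts this is strictly positive for nef and big $L$; since $q>0$ this forces $a_1>0$. For $a_2$, the vanishing $c_1(X)=0$ gives $\td_2(X)=\tfrac{1}{12}c_2(X)$, hence $a_2=\tfrac{1}{12}C(c_2(X))$, and $C(c_2(X))>0$ follows from the positivity of the scalar in the characteristic-value identity $\lambda(L)=\tfrac{12c_X}{(2n-1)C(c_2(X))}q_X(c_1(L))$ (equivalently, from the Miyaoka--Yau fake effectivity of the second Todd class).

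With $a_0=4$, $a_3>0$, and $a_1,a_2>0$ while $q>0$, each of the three nonconstant terms is strictly positive, so $\chi(X,L)>4$. The final step is the integrality argument: $h^0(X,L)=\chi(X,L)$ is a nonnegative integer strictly larger than $4$, whence $h^0(X,L)\geq 5$. I expect the only genuinely delicate point to be pinning down the constant term as precisely $a_0=\chi(X,\OO_X)=4$ — this is exactly what upgrades the bare strict inequality $\chi>4$ to the sharp value $5$ — together with the care needed to keep every intermediate inequality strict so the integrality step applies; the rest is a bookkeeping assembly of Theorem \ref{effective of Td4} and the Beauville--Fujiki formalism.
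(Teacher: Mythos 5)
Your proof is correct and follows essentially the same route as the paper: Kawamata--Viehweg vanishing to identify $h^0$ with $\chi$, the Hirzebruch--Riemann--Roch expansion, nonnegativity of the $\td_2$ term via Miyaoka--Yau, strict positivity of the $\td_4$ term via Theorem \ref{effective of Td4}, and $\chi(X,\mathcal{O}_X)=4$ plus integrality. Your repackaging through the Fujiki constants $a_i=C(\td_{2n-2i}(X))$ and the variable $q=q_X(c_1(L))$ is an equivalent formulation of the paper's direct intersection-number expansion, not a different argument.
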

\begin{proof}
By Kawamata--Viehweg vanishing theorem and Hirzubruch--Riemann--Roch formula,
\begin{equation}h^{0}(X,L)=\chi(X,L)=\frac{1}{6!}\int_{X}c^{6}_{1}(L)+\frac{1}{4!}\int_{X}c^{4}_{1}(L)\cdot \td_{2}(X)+\frac{1}{2}\int_{X}c^{2}_{1}(L)\cdot \td_{4}(X)+\chi(X,\mathcal{O}_{X}). \nonumber \end{equation}
As it is shown by Miyaoka and Yau \cite{miyaoka, yau0} that $\td_2(X)=\frac{1}{12} c_2(X)$ is quasi-effective, along with Theorem \ref{effective of Td4} we get
$h^{0}(X,L)>\chi(X,\mathcal{O}_{X})=4$.
\end{proof}
\subsection{Known hyperk\"{a}hler varieties}
For hyperk\"{a}hler varieties, the only known examples are (up to deformations) two families, Hilbert schemes of points on K3 surfaces and generalized Kummer varieties, due to Beauville's construction \cite{beauville} and two examples introduced by O'Grady \cite{ogrady1, ogrady2}. We verify Conjecture \ref{kawamata conj} for these known examples except for O'Grady's $10$-dimensional example.
\begin{proposition}\label{verification for beauville}
Let $L$ be a nef and big line bundle on a hyperk\"{a}hler variety $X$ of dimension $2n$.
\begin{enumerate}
\item If $X$ is homeomorphic to a Hilbert scheme of points on K3 surfaces, then $$h^{0}(X,L)\geq \frac{(n+2)(n+1)}{2}.$$
\item If $X$ is homeomorphic to a generalized Kummer variety with $n\geq 2$, or O'Grady's $6$-dimensional example, then $$h^{0}(X,L)\geq (n+1)^{2}.$$
\end{enumerate}
Moreover, in the above cases, all Todd classes of $X$ are quasi-effective.
\end{proposition}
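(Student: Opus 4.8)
The plan is to reduce everything to the Riemann--Roch polynomial in $q:=q_X(c_1(L))$ and to exploit that this polynomial is a topological invariant, so it agrees with the one for the standard model. Since $L$ is nef and big and $K_X\cong\OO_X$, Kawamata--Viehweg vanishing gives $h^0(X,L)=\chi(X,L)$, and by Hirzebruch--Riemann--Roch together with Theorem \ref{fujiki result},
$$\chi(X,L)=\sum_{i=0}^{n}\frac{a_i}{(2i)!}\,q^{i},\qquad a_i=C(\td_{2n-2i}(X)),$$
a degree-$n$ polynomial whose coefficients $a_i$ are topological invariants of $X$. Because $X$ is assumed homeomorphic to one of the standard models, both the $a_i$ and the lattice $(H^2(X,\ZZ),q_X)$ coincide with those of the model, so it suffices to read off the Riemann--Roch polynomial from the known computations for the model and to locate the admissible values of $q$.

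For the standard models the polynomial is explicit. For a Hilbert scheme of points on a K3 surface the computation of Ellingsrud--G\"ottsche--Lehn and Nieper-Wisskirchen gives $\chi(X,L)=\binom{q/2+n+1}{n}$, and for a generalized Kummer variety it gives $\chi(X,L)=(n+1)\binom{q/2+n}{n}$; for O'Grady's six-dimensional example one inputs the Riemann--Roch polynomial arising from the computations of Rapagnetta and Mongardi--Rapagnetta--Sacc\`a. I would then observe that for $L$ nef and big the number $q=q_X(c_1(L))$ is a positive integer, and that the BBF lattice of each model is \emph{even} (it is $U^{\oplus 3}\oplus E_8(-1)^{\oplus 2}\oplus\langle -2(n-1)\rangle$, $U^{\oplus 3}\oplus\langle -2(n+1)\rangle$, and $U^{\oplus 3}\oplus\langle -2\rangle^{\oplus 2}$ respectively), whence $q\geq 2$. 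Each of these polynomials has positive leading coefficient and all of its roots negative, so it is strictly increasing for $q>0$, and its minimum over the admissible values $q\geq 2$ is attained at $q=2$. Substituting $q=2$ yields $\binom{n+2}{n}=\tfrac{(n+1)(n+2)}{2}$ in the Hilbert-scheme case and $(n+1)\binom{n+1}{n}=(n+1)^2$ in the other two cases, matching the claimed bounds.

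For the \emph{moreover} statement I would split by parity of the Todd index. For an even-index class, Theorem \ref{fujiki result} gives $\int_X\td_{2k}(X)\cdot L^{2n-2k}=C(\td_{2k}(X))\,q^{\,n-k}=a_{n-k}\,q^{\,n-k}$; since each Riemann--Roch polynomial above has all roots negative, every coefficient $a_i$ is strictly positive, so this integral is $\geq 0$, and $>0$ when $L$ is big (as then $q>0$). For an odd-index class the relevant intersection is $\int_X\td_{2k+1}(X)\cdot L^{2n-2k-1}$, and here the map $\beta\mapsto\int_X\td_{2k+1}(X)\,\beta^{2n-2k-1}$ is a monodromy-invariant homogeneous polynomial of \emph{odd} degree on $H^2$; by the generalized Fujiki relation (\cite[Corollary 23.17]{gross}) it must be a constant multiple of $q_X(\beta)^{(2n-2k-1)/2}$, and the non-integral exponent forces it to vanish identically. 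Hence every Todd class integrates to a nonnegative number against powers of a nef $L$, i.e. all Todd classes are fakely effective.

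The main obstacle is supplying the explicit Riemann--Roch polynomials, and in particular the case of O'Grady's six-dimensional example, where there is no clean product formula and one must rely on the comparatively recent determination of its Fujiki constant and Riemann--Roch data; verifying there that all coefficients are positive and that the value at $q=2$ meets the bound $16$ is a finite but genuine computation. The remaining ingredients---reducing to $q\geq 2$ via evenness of the BBF lattice, and the odd-degree vanishing---are uniform and routine once the polynomial is in hand.
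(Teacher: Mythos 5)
Your proposal is correct, and it follows the paper's overall architecture: Kawamata--Viehweg vanishing plus Hirzebruch--Riemann--Roch plus Theorem \ref{fujiki result} to reduce everything to a polynomial in $q_X(c_1(L))$ whose coefficients are topological invariants; identification of that polynomial with the explicit ones for the three models (Ellingsrud--G\"ottsche--Lehn for $\Hilb^n(S)$, Britze--Nieper for $K^nA$, Nieper's Rozansky--Witten formula fed with the Mongardi--Rapagnetta--Sacc\`a Chern numbers for $\MM_6$); and positivity of all coefficients for the fake-effectivity claim. Where you genuinely diverge is the key integrality step. The paper never uses the integral structure of the Beauville--Bogomolov--Fujiki lattice: it only uses that $\chi(X,L)\in\ZZ$ while $q_X(c_1(L))\in\QQ_{>0}$ (resp. $\lambda(L)\in\QQ_{>0}$), and then the purely arithmetic Lemma \ref{lemma on integers} forces $\tfrac{1}{2}q_X(c_1(L))$ (resp. $\tfrac{(n+1)\lambda(L)}{4}$, resp. $\lambda(L)$) to be a positive integer, so the bound comes from evaluating at $1$. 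You instead import the evenness of the known BBF lattices of the three models (including Rapagnetta's computation for O'Grady's sixfold) and the homeomorphism invariance of the integral form to conclude $q\geq 2$. Both routes are valid and give the same bounds; yours is quicker once the explicit lattices are granted, while the paper's is self-contained and needs only the Riemann--Roch polynomials and no lattice theory at all---which is exactly why Lemma \ref{lemma on integers} sits in the appendix.

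Two smaller caveats on your write-up. First, for the Kummer and O'Grady cases the formulas in the literature cited by the paper are naturally expressed in $\lambda(L)$, so your $q$-forms require the conversions $\lambda=2q/(n+1)$ for $K^nA$ and $\lambda=q/2$ for $\MM_6$; your stated polynomials are the correct ones, but this conversion (equivalently, knowing the constant $C(c_2(X))$ for each model) is an extra input that your sketch elides, and it is part of the ``finite but genuine computation'' you acknowledge. Second, your treatment of the odd Todd classes is heavier than needed and does not literally follow from Theorem \ref{fujiki result} as stated, which concerns classes in $H^{4j}$ only; the clean statement is that $\td_{2k+1}(X)=0$ for any manifold with $c_1=0$ in real cohomology, since $\td(X)=e^{c_1/2}\hat{A}(X)$ and the $\hat{A}$-class is concentrated in degrees divisible by $4$ (for hyperk\"ahler varieties one can alternatively note that all odd Chern classes vanish rationally because $TX\cong\Omega^1_X$). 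This vanishing is also what the paper implicitly relies on when it asserts that fake effectivity of all Todd classes is equivalent to non-negativity of the coefficients of $P_X$.
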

\begin{proof}
Let $X$ be a hyperk\"{a}hler manifold of dimension $2n$ and $L$ a line bundle.
By Theorem \ref{fujiki result}, we know that
$$
\chi(X, L)=P_X(q_X(c_1(L)))=\tilde{P}_X(\lambda(L)),
$$
where $P_X$ and $\tilde{P}_X$ are polynomials depending only on the topology of $X$. Note that all Todd classes of $X$ are quasi-effective if and only if all coefficients of $P_X$ (or $\tilde{P}_X$) are non-negative.

For a generalized Kummer variety $K^{n}A$ of an Abelian surface $A$, and a line bundle $H$ on $K^{n}A$, Britze--Nieper \cite{britze} showed that
\begin{equation}\chi(K^{n}A, H)=(n+1)\binom{\frac{(n+1)\lambda(H)}{4}+n}{n}.
\nonumber \end{equation}
Thus
$$
\tilde{P}_{K^{n}A}(t)=(n+1)\binom{\frac{(n+1)t}{4}+n}{n}$$ is a polynomial with positive coefficients.
Hence if $X$ is homeomorphic to $K^{n}A$ and $L$ is a line bundle on $X$,
then all Todd classes of $X$ are quasi-effective and
$$
\chi(X, L)=\tilde{P}_X(\lambda(L))=\tilde{P}_{K^{n}A}(\lambda(L))=(n+1)\binom{\frac{(n+1)\lambda(L)}{4}+n}{n}.
$$
When $L$ is nef and big, $\lambda(L)\in\mathbb{Q}_{>0}$ and hence by Lemma \ref{lemma on integers} (assuming that $n\geq 2$), $\frac{(n+1)\lambda(L)}{4}$ is a positive integer and $h^{0}(X, L )=\chi(X, L )\geq(n+1)^{2}$.

For a Hilbert scheme of $n$-points $\Hilb^{n}(S)$ on a K3 surface $S$,
\begin{equation}\Pic(\Hilb^{n}(S))\cong \Pic(S)\oplus \mathbb{Z}E \nonumber \end{equation}
and any line bundle on $\Hilb^{n}(S)$ is of the form $H_{n}\otimes E^{r}$, where $r\in\mathbb{Z}$ and $H_{n}$ is induced by a line bundle $H$ on $S$ (see \cite[Section 5]{egl}). Ellingsrud--G\"{o}ttsche--Lehn's formula (\cite[Theorem 5.3]{egl}) gives
\begin{equation}\chi(\Hilb^{n}(S),H_{n}\otimes E^{r})=\binom{\chi(S,H)-(r^{2}-1)(n-1) }{ n}.
\nonumber \end{equation}
With the Beauville--Bogomolov--Fujiki form $q$, we have
\begin{equation}\big(H^{2}(\Hilb^{n}(S),\mathbb{Z}),q\big)\cong H^{2}(S,\mathbb{Z})_{(-, -)}\oplus_{\perp}\mathbb{Z}[E], \nonumber \end{equation}
where $(-,-)$ is the cup product on $S$ and $q([E])=-2(n-1)$ (see \cite[Section 2.2]{huybrechts}). Thus
\begin{equation}q(c_{1}(H_{n}\otimes E^{r}))=(H)^2-2r^{2}(n-1), \nonumber \end{equation}
and
\begin{equation}\chi(\Hilb^{n}(S),H_{n}\otimes E^{r})=\binom{\frac{1}{2}q(c_{1}(H_{n}\otimes E^{r}))+n+1}{n}.
\nonumber \end{equation}
Thus
$$
P_{\Hilb^{n}(S)}(t)=\binom{\frac{1}{2}t+n+1}{n}$$
 is a polynomial with positive coefficients.
Hence if $X$ is homeomorphic to $\Hilb^{n}(S)$ and $L$ is a line bundle on $X$,
then all Todd classes of $X$ are quasi-effective and
$$
\chi(X, L)=P_X(q_X(c_1(L)))=P_{\Hilb^{n}(S)}(q_X(c_1(L)))=\binom{\frac{1}{2}q_X(c_1(L))+n+1}{n}.
$$
When $L$ is nef and big, $q_X(c_1(L))\in\mathbb{Q}_{>0}$ and hence by Lemma \ref{lemma on integers}, $\frac{1}{2}q_X(c_1(L))$ is a positive integer and $h^{0}(X, L )=\chi(X, L )\geq\binom{n+2}{n}$.


In general, for a hyperk\"{a}hler variety $X$ of dimension $2n$ and a line bundle $L$ on $X$, Nieper \cite{nieper} used Rozansky--Witten invariants to express those coefficients $a_{i}$ in the expansion of $\chi(X,L)$ in terms of Chern numbers of $X$. More precisely, we have
\begin{equation}\chi(X,L)=\int_{X}\exp\bigg(-2\sum_{k=1}^{\infty}\frac{B_{2k}}{4k}\ch_{2k}(X)T_{2k}\bigg(\sqrt{\frac{\lambda(L)}{4}+1}\bigg)\bigg),
\label{nieper RR} \end{equation}
where $B_{2k}$ are the Bernoulli numbers with $B_{2}=\frac{1}{6}$, $B_{4}=-\frac{1}{30}$, $B_{6}=\frac{1}{42}$, and $T_{2k}$ are even Chebyshev polynomials.

Now consider O'Grady's six dimensional hyperk\"{a}hler manifold $\MM_6$. By Mongardi--Rapagnetta--Sacc\`{a}'s computation
(\cite[Corollary 6.8]{mrs}), we have
\begin{equation}\int_{\MM_6}c_{2}^{3}(\MM_6)=30720, \quad \int_{\MM_6}c_{2}(\MM_6)c_{4}(\MM_6)=7680, \quad \int_{\MM_6}c_{6}(\MM_6)=1920.
\nonumber \end{equation}
After direct computations by (\ref{nieper RR}), for a line bundle $H$ on $\MM_6$,
\begin{equation}\chi(\MM_6,H)=\frac{4}{27}\Big(T_{6}(\sqrt{\lambda(H)/4+1})+6T_{2}(\sqrt{\lambda(H)/4+1})\cdot T_{4}(\sqrt{\lambda(H)/4+1})+20T^{3}_{2}(\sqrt{\lambda(H)/4+1})\Big). \nonumber \end{equation}
Plugging Chebyshev polynomials: $T_{2}(x)=2x^{2}-1$, $T_{4}(x)=8x^{4}-8x^{2}+1$, $T_{6}(x)=32x^{6}-48x^{4}+18x^{2}-1$ into the formula, we get
\begin{equation}\chi(\MM_6,H)=4\binom{\lambda(H)+3}{3}. \nonumber \end{equation}
Thus
$$
\tilde{P}_{\MM_6}(t)=4\binom{t+3}{3}$$
is a polynomial with positive coefficients.
Hence if $X$ is homeomorphic to $\MM_6$ and $L$ is a line bundle on $X$,
then all Todd classes of $X$ are quasi-effective and
$$
\chi(X, L)=\tilde{P}_X(\lambda(L))=\tilde{P}_{\MM_6}(\lambda(L))=4\binom{\lambda(L)+3}{3}.
$$
When $L$ is nef and big, $\lambda(L)\in\mathbb{Q}_{>0}$ and hence by Lemma \ref{lemma on integers}, $\lambda(L)$ is a positive integer and $h^{0}(X, L )=\chi(X, L )\geq 4\binom{4}{3}=16$.
\end{proof}




From the above computations, we may propose the following conjecture.
\begin{conjecture}
Let $X$ be a hyperk\"{a}hler variety of dimension $2n$ and $L$ a nef and big line bundle on $X$. Then
\begin{enumerate}
\item $h^{0}(X,L)\geq n+2$;
\item more wildly, $\int_{X}\td_{2n-2i}(X)\cdot L^{2i}\geq 0$ for all $i\in\mathbb{Z}$.
\end{enumerate}
\end{conjecture}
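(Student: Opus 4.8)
The plan is to deduce everything from the single assertion that all of the coefficients $a_i=C(\td_{2n-2i}(X))$ in the expansion $\chi(X,L)=\sum_{i=0}^{n}\frac{a_i}{(2i)!}q_X(c_1(L))^i$ are nonnegative, which is exactly part (2). Part (1) is then almost formal: one has $a_0=\chi(X,\OO_X)=n+1$ (for a hyperk\"ahler variety of dimension $2n$ the Hodge numbers $h^{0,2k+1}$ vanish and $h^{0,2k}=1$ for $0\le k\le n$), while $a_n=C(1)=c_X>0$. Hence, assuming (2), for nef and big $L$ (so $q_X(c_1(L))>0$) one gets $\chi(X,L)\ge (n+1)+\frac{c_X}{(2n)!}q_X(c_1(L))^n>n+1$; since $h^0(X,L)=\chi(X,L)$ by Kawamata--Viehweg vanishing and is an integer, $h^0(X,L)\ge n+2$. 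So the whole problem is to prove $a_i\ge 0$ for every $i$.

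First I would reduce each inequality $a_i\ge 0$ to a Hodge-theoretic statement, exactly as in Lemma \ref{lemma c22}: fixing $0\ne\sigma\in H^{2,0}(X)$, Theorem \ref{fujiki result} gives $\binom{2i}{i}\int_X\td_{2n-2i}(X)(\sigma\overline{\sigma})^i=a_i\,q_X(\sigma+\overline{\sigma})^i$ with $q_X(\sigma+\overline{\sigma})>0$, so it suffices to show $\int_X\td_{2n-2i}(X)(\sigma\overline{\sigma})^i\ge 0$. Next I would import the positivity of the square-root Todd components. Writing $\sqrt{\td(X)}=\sum_j s_j$ with $s_j\in H^{4j}(X)$, Nieper's formula \eqref{nieper formula} with $\alpha=t\,c_1(L)$ and comparison of the coefficient of $t^{2i}$ gives $\int_X s_{n-i}\,L^{2i}=(2i)!\binom{n}{i}\lambda(L)^i\int_X\sqrt{\td(X)}>0$, using the Hitchin--Sawon positivity $\int_X\sqrt{\td(X)}>0$; equivalently $C(s_{n-i})>0$ for every $i$.

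The difficulty is that $\td(X)=\big(\sqrt{\td(X)}\big)^2$, so the degree-$4(n-i)$ part of $\td(X)$ is the convolution $\sum_{a+b=n-i}s_a s_b$, whence $\int_X\td_{2n-2i}(X)(\sigma\overline{\sigma})^i=\sum_{a+b=n-i}\int_X s_a s_b\,(\sigma\overline{\sigma})^i$. The boundary terms $a=0$ or $b=0$ equal $2\int_X s_{n-i}(\sigma\overline{\sigma})^i$ and are positive by the previous step; the genuinely new input is the sign of $C(s_a s_b)$ for $1\le a\le b$. For $\td_4$ the only such term is $s_1^2=\frac{1}{576}c_2^2$, which is supplied by Lemma \ref{lemma c22} through the orthogonal splitting $c_2=\mu Q+p$ into a positive multiple of the dual form $Q$ plus a primitive $(2,2)$-class $p$ together with the Hodge--Riemann bilinear relations. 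My plan is to push this primitive-decomposition argument to all $s_a$: expand each $s_a$ in the Lefschetz decomposition attached to $q_X$, multiply out, and argue that the surviving cross terms contribute with the correct sign.

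I expect this last step to be the main obstacle, and it is precisely why the statement is only conjectural. The Hodge--Riemann relations pin down the sign of $\int_X p\,\overline{p}\,(\sigma\overline{\sigma})^{n-2}$ for a single primitive class of type $(2,2)$, but already for $\td_6$ one meets the mixed term $s_1 s_2$ (hence the Chern monomial $c_2c_4$), and for $\td_8$ the terms $s_1 s_3$ and $s_2^2$; controlling these amounts to the nonnegativity of $C(c_2^{k_2}c_4^{k_4}\cdots)$ for arbitrary Chern monomials, or equivalently of the higher Rozansky--Witten weight systems appearing in \eqref{nieper RR}, which is not known and where the negative $-c_4,-c_6,\dots$ contributions must be shown to be dominated. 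A fallback is to settle the conjecture deformation class by deformation class using the explicit polynomials $P_X$ of Proposition \ref{verification for beauville}, but this cannot establish it uniformly, since it is not known that every hyperk\"ahler variety is deformation equivalent to a known one --- exactly the gap left open for O'Grady's $10$-dimensional example.
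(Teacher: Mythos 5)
The statement you are addressing is presented in the paper as a \emph{conjecture}, not a theorem: the authors propose it ``from the above computations'' and give no proof, only supporting evidence. So the right comparison is between your strategy and that evidence, and here your proposal tracks the paper almost exactly. Your reduction of (1) to (2) is correct: with $a_0=\chi(X,\OO_X)=n+1$, $a_n=C(1)=c_X>0$, $q_X(c_1(L))>0$ for $L$ nef and big, and Kawamata--Viehweg vanishing, integrality of $\chi(X,L)$ gives $h^0(X,L)\geq n+2$. Your rigorous partial results are precisely the paper's: the identity $\int_X s_{n-i}\,L^{2i}=(2i)!\binom{n}{i}\lambda(L)^i\int_X\sqrt{\td(X)}>0$ extracted from (\ref{nieper formula}) together with Hitchin--Sawon positivity is exactly how Theorem \ref{effective of Td4} handles $\td_4=2s_2+s_1^2$, with the cross term $s_1^2=\frac{1}{576}c_2^2(X)$ supplied by the primitive-decomposition and Hodge--Riemann argument of Lemma \ref{lemma c22}; and your ``fallback'' is exactly Proposition \ref{verification for beauville}, which verifies part (2) for all known deformation classes except O'Grady's $10$-dimensional example by exhibiting the polynomials $P_X$, $P'_X$ with nonnegative coefficients.

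The genuine gap is the one you name yourself, and it is worth stating why it is not a routine extension of the two tools above. The Hodge--Riemann argument of Lemma \ref{lemma c22} works because $c_2(X)$ lives in $H^4$, where the decomposition into $\mu Q+p$ has exactly two pieces and the primitive piece contributes with a controlled sign against $(\sigma\overline{\sigma})^{n-2}$; for $s_a$ with $a\geq 2$ the Lefschetz decomposition of $H^{4a}$ has many pieces, the Hodge--Riemann signs alternate with the Lefschetz level, and no analogue of the positivity $\mu>0$ is available for the higher components. Nieper's formula, on the other hand, controls only $\sqrt{\td(X)}$, never its square, so the mixed Fujiki constants $C(s_as_b)$, $a,b\geq 1$ --- equivalently, signs of Chern monomials such as $c_2c_4$ against $c_2^3$ starting already at $\td_6$ --- are untouched by either method, and no such inequality is known. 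So your text is an accurate map of the problem and faithfully reproduces everything the paper actually proves, but it is not (and honestly does not claim to be) a proof; the conjecture remains open beyond the cases the paper settles.
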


\section{Calabi--Yau case}
\subsection{Complete intersections in Fano manifolds}
As for Calabi--Yau varieties, there is a huge amount of examples provided by complete intersections in Fano manifolds.
In this subsection, we verify Conjecture \ref{kawamata conj} for these examples.

Let $X$ be a projective variety, $N_{1}(X)$ be its set of numerical equivalent classes of 1-cycles in $\mathbb{R}$-coefficients. Set
\begin{equation}NE(X)=\big\{\sum a_{i}[C_{i}]\in N_{1}(X) \mid C_{i}\subseteq X, \textrm{ } 0\leq a_{i}\in \bR \big\}.  \nonumber \end{equation}
and $\overline{NE}(X)$ its closure in $N_{1}(X)$. Note that $\overline{NE}(X)$ is the dual of the cone of nef divisors on $X$.

As our testing examples are realized as hypersurfaces, or more generally, complete intersections in Fano manifolds, we recall the following comparison result for closed cone of curves.
\begin{theorem}[{\cite{kollar}, \cite[Proposition 3.5]{beltrametti}}]\label{kollar thm}
Let $Y$ be a projective manifold of dimension $n\geq4$, $H$ be an ample line bundle on $Y$, and $X$ be an effective smooth divisor in $|H|$.
Assume $K^{-1}_{Y}\otimes H^{-1}$ is nef. Then $\overline{NE}(X)\cong\overline{NE}(Y)$ under the natural embedding $X\hookrightarrow Y$.
\end{theorem}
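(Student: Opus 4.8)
The plan is to compare the two cones through the natural linear map $i_*\colon N_1(X)\to N_1(Y)$ induced by the inclusion $i\colon X\hookrightarrow Y$, and to establish separately that (i) $i_*$ is an isomorphism of vector spaces, and (ii) it carries $\overline{NE}(X)$ onto $\overline{NE}(Y)$. The inclusion $i_*\big(\overline{NE}(X)\big)\subseteq \overline{NE}(Y)$ is immediate, since an effective $1$-cycle on $X$ is an effective $1$-cycle on $Y$; the whole point is to upgrade this to an isomorphism of cones.

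For (i) I would invoke the Grothendieck--Lefschetz theorem on Picard groups. Since $X\in|H|$ is a smooth ample divisor and $\dim X=n-1\geq 3$, restriction gives an isomorphism $\Pic(Y)\xrightarrow{\ \sim\ }\Pic(X)$, hence an isomorphism $i^*\colon N^1(Y)\xrightarrow{\ \sim\ } N^1(X)$ of N\'eron--Severi spaces. By the projection formula $i^*D\cdot\gamma=D\cdot i_*\gamma$, the map $i_*$ is dual to $i^*$, so $i_*\colon N_1(X)\to N_1(Y)$ is an isomorphism as well. At this stage I would also record the crucial positivity: from the hypothesis that $K_Y^{-1}\otimes H^{-1}=-(K_Y+X)$ is nef and $H$ is ample we obtain $-K_Y=(-K_Y-H)+H=\text{nef}+\text{ample}$, which is ample, so $Y$ is a Fano manifold. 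Consequently $\overline{NE}(Y)$ is a rational polyhedral cone by Mori's Cone Theorem, and each of its finitely many extremal rays is generated by a rational curve $C$ with $-K_Y\cdot C\leq n+1$.

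For (ii) it then suffices, by polyhedrality, to show that every extremal ray $R\subseteq\overline{NE}(Y)$ is generated by the class of a curve contained in $X$; granting this, since $\overline{NE}(Y)$ is the closed convex cone spanned by its extremal rays and $i_*\big(\overline{NE}(X)\big)$ is a closed convex cone containing a generator of each of them, we get the reverse inclusion $\overline{NE}(Y)\subseteq i_*\big(\overline{NE}(X)\big)$. To produce a curve of $X$ in a given extremal ray, I would take a minimal rational curve $C$ generating $R$ and run the deformation theory of rational curves: the nefness of $-(K_Y+X)$ bounds the intersection $X\cdot C=H\cdot C\leq -K_Y\cdot C$, while the estimate $\dim_{[f]}\operatorname{Mor}(\mathbb{P}^1,Y)\geq -K_Y\cdot C+n$ furnishes a family of deformations large enough, relative to the codimension $1$ of $X$, to force a representative of $R$ into the ample divisor $X$ by bend-and-break. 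This last step---exhibiting, for each extremal ray of the Fano variety $Y$, an honest curve lying in the ample divisor $X$---is the main obstacle. It is precisely the Mori-theoretic heart of \cite{kollar} and \cite[Proposition 3.5]{beltrametti}, and it is where both hypotheses $\dim Y\geq 4$ (so that $\dim X\geq 3$) and the nefness of $-(K_Y+X)$ are genuinely needed, through the dimension counts for families of rational curves through points of $X$ and for the loci they sweep out; a covering family alone does not automatically contain a member inside $X$, so the degeneration argument is unavoidable.

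Combining (i) and (ii), the map $i_*$ is a linear isomorphism carrying $\overline{NE}(X)$ bijectively onto $\overline{NE}(Y)$, which is the asserted isomorphism $\overline{NE}(X)\cong\overline{NE}(Y)$ under the embedding $X\hookrightarrow Y$.
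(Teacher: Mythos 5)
Your setup is sound, and in fact it is more than the paper itself provides: the paper does not prove Theorem \ref{kollar thm} at all, but imports it as a black box from \cite{kollar} and \cite[Proposition 3.5]{beltrametti}, so there is no in-paper argument to compare against. Your step (i) --- Grothendieck--Lefschetz giving $\Pic(Y)\cong\Pic(X)$, hence $i_*\colon N_1(X)\to N_1(Y)$ an isomorphism (this is exactly where $n\geq 4$ enters) --- is correct, as are the observation that $-K_Y=(-K_Y-H)+H$ is nef plus ample, hence ample, so $Y$ is Fano, and the reduction via the cone theorem to the claim that every extremal ray of $\overline{NE}(Y)$ contains the class of a curve lying in $X$.

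The problem is that the proposal stops exactly where the theorem starts, and the mechanism you gesture at for the key claim would not work. If the contraction $\varphi_R$ of an extremal ray $R$ has a fiber $F$ with $\dim F\geq 2$, one is done cheaply: $X\cap F$ contains a curve, and every curve contracted by $\varphi_R$ has class in $R$; no bend-and-break is needed. The genuinely hard cases are the contractions all of whose fibers are one-dimensional (conic bundles, blowups along smooth codimension-two centers), and there your dimension count has no slack: for a conic bundle fiber $C$ one has $-K_Y\cdot C=2$ and the fibers move in exactly an $(n-1)$-dimensional family, so $\dim_{[f]}\operatorname{Mor}(\mathbb{P}^1,Y)$ equals the lower bound $-K_Y\cdot C+n$ (after accounting for the three-dimensional reparametrization group), and bend-and-break produces nothing; meanwhile a fiber through a point of $X$ may meet $X$ in only $X\cdot C\leq 2$ points rather than lie in it. What the cited proofs actually do in these cases is different in kind: one shows that $\varphi_R|_X$ cannot be finite, using the Ionescu--Wi\'{s}niewski fiber-dimension inequality, Ando-type classification of contractions with one-dimensional fibers, and arguments specific to ample divisors --- e.g.\ if $X\cdot C=1$ and no fiber lies in $X$, then $X\to Z$ is finite birational onto the smooth base, so $\rho(X)=\rho(Z)=\rho(Y)-1$, contradicting the Lefschetz isomorphism $\rho(X)=\rho(Y)$; the case $X\cdot C=2$ requires still further work. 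None of this appears in your sketch, so as a self-contained proof it has a genuine gap precisely at the step that makes the statement a theorem. Had you simply cited \cite{kollar} and \cite[Proposition 3.5]{beltrametti} for that step, you would have been doing exactly what the paper does.
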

In fact, by the Lefschetz hyperplane theorem (see e.g. \cite[Example 3.1.25]{lazarsfeld}), $\Pic(X)\cong \Pic(Y)$ under the embedding $i: X\hookrightarrow Y$, and $i_{*}: \overline{NE}(X)\rightarrow \overline{NE}(Y)$ is an inclusion. The above theorem says that under certain condition, $i_{*}$ is an isomorphism and nef line bundles on $X$ and $Y$ are identified under $i^*$. We then compare sections of those line bundles on $X$ and $Y$.
\begin{proposition}\label{compare sections}
Let $Y$ be a projective manifold, $H$ a line bundle such that $h^{0}(Y,H)\geq2$. Suppose that the linear system $|H|$ contains a smooth element $X$.
Assume that $K^{-1}_{Y}\otimes H^{-1}$ is nef. Then for any nef line bundle $L$ on $Y$ with $L\otimes (K^{-1}_{Y}\otimes H^{-1})$ big, we have
\begin{equation}h^{0}(Y,L)=h^{0}(X,L|_{X})+h^{0}(Y,L\otimes H^{-1}). \nonumber \end{equation}
Furthermore, if $h^{0}(Y,L)>0$, then $h^{0}(X,L|_{X})>0$.
\end{proposition}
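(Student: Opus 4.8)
The plan is to read off the formula from the structure sequence of the smooth divisor $X$ and to kill the obstruction term by Kawamata--Viehweg vanishing. First I would fix a section $s\in H^{0}(Y,H)$ cutting out $X$, so that $\OO_Y(X)\cong H$ and $\OO_Y(-X)\cong H^{-1}$, and tensor $0\to \OO_Y(-X)\to \OO_Y\to \OO_X\to 0$ by $L$ to obtain
\begin{equation*}
0\to L\otimes H^{-1}\xrightarrow{\ \cdot s\ } L\to L|_X\to 0.
\end{equation*}
Passing to the long exact cohomology sequence, the asserted additive formula will follow at once provided $H^{1}(Y,L\otimes H^{-1})=0$.

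This vanishing is the heart of the first statement and is a direct application of Kawamata--Viehweg vanishing \cite{kawamata1}. I would write $L\otimes H^{-1}=K_Y\otimes\big(L\otimes K_Y^{-1}\otimes H^{-1}\big)$ and observe that the twisting bundle $D:=L\otimes K_Y^{-1}\otimes H^{-1}$ is nef, being the product of the nef bundle $L$ and the nef bundle $K_Y^{-1}\otimes H^{-1}$, and is big by hypothesis. Hence $H^{i}(Y,L\otimes H^{-1})=H^{i}(Y,K_Y\otimes D)=0$ for all $i>0$, in particular for $i=1$. The long exact sequence then collapses to
\begin{equation*}
0\to H^{0}(Y,L\otimes H^{-1})\to H^{0}(Y,L)\to H^{0}(X,L|_X)\to 0,
\end{equation*}
which yields $h^{0}(Y,L)=h^{0}(X,L|_X)+h^{0}(Y,L\otimes H^{-1})$.

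For the final assertion the crucial point is that the right-hand side above is insensitive to the chosen member: since $\OO_Y(-X')\cong H^{-1}$ for \emph{every} $X'\in|H|$, the vanishing $H^{1}(Y,L\otimes H^{-1})=0$ is the same one computation, so the identical exact sequence holds for each smooth $X'\in|H|$ and $h^{0}(X',L|_{X'})=h^{0}(Y,L)-h^{0}(Y,L\otimes H^{-1})$ is independent of $X'$. Assuming $h^{0}(Y,L)>0$, I would fix a nonzero $\sigma\in H^{0}(Y,L)$ and choose a smooth member $X'\in|H|$ not contained in its zero locus $(\sigma)_0$; then $\sigma|_{X'}\neq 0$, so $h^{0}(X',L|_{X'})>0$, and by the independence just noted $h^{0}(X,L|_X)=h^{0}(X',L|_{X'})>0$. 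Such a member exists because $h^{0}(Y,H)\geq 2$ forces $|H|$ to move: its members sweep out $Y$ and therefore cannot all lie in the proper closed subset $(\sigma)_0$, while the smooth members form a dense open subset of $|H|$ containing $X$, so a general member is both smooth and not contained in $(\sigma)_0$. I expect this last step to be the only real obstacle: one must use that the linear system genuinely moves, rather than merely that $H$ is effective, to guarantee a member on which a prescribed section survives. The positivity hypotheses, by contrast, enter solely through the vanishing that makes the counting formula --- and hence the common value $h^{0}(X,L|_X)$ --- independent of the member.
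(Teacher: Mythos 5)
Your first half coincides with the paper's proof: the same ideal--sheaf sequence twisted by $L$, and the same application of Kawamata--Viehweg vanishing to $L\otimes H^{-1}=K_Y\otimes(L\otimes K_Y^{-1}\otimes H^{-1})$, whose twisting bundle is nef (a sum of nef classes) and big by hypothesis. For the final assertion, however, you take a genuinely different route. The paper argues by cases: if $h^{0}(Y,L\otimes H^{-1})=0$ then $h^{0}(X,L|_X)=h^{0}(Y,L)>0$; otherwise it invokes \cite[Lemma 15.6.2]{kol shafa} to get $h^{0}(Y,L)\geq h^{0}(Y,H)+h^{0}(Y,L\otimes H^{-1})-1\geq h^{0}(Y,L\otimes H^{-1})+1$, so the difference is at least $1$. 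You instead use that the count $h^{0}(X',L|_{X'})=h^{0}(Y,L)-h^{0}(Y,L\otimes H^{-1})$ is the same for every member $X'\in|H|$, and produce a member on which a fixed nonzero $\sigma\in H^{0}(Y,L)$ survives. The hypothesis $h^{0}(Y,H)\geq 2$ enters both proofs at exactly this point (sweeping out $Y$ in yours, Koll\'ar's inequality in the paper's); your version is self-contained where the paper cites a lemma, and in effect re-proves the needed special case of that lemma geometrically.

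Two small repairs. First, your passage from ``not every member lies in $(\sigma)_0$'' to ``a \emph{general} member is smooth and not contained in $(\sigma)_0$'' needs the locus $Z=\{X'\in|H| : X'\subseteq(\sigma)_0\}$ to be closed (or at least constructible), not merely proper: a nonempty open set need not meet the complement of an arbitrary proper subset. This is true and easy --- $X'\subseteq(\sigma)_0$ means $\sigma=s'\tau$ for some $\tau\in H^{0}(Y,L\otimes H^{-1})$, so $Z$ is the image, under the proper projection to $|H|$, of the closed incidence set $\{([s'],[\tau]) : s'\tau\wedge\sigma=0 \text{ in } \Lambda^2H^{0}(Y,L)\}$ --- but it should be said. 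Second, you can avoid the issue altogether: the structure sequence and the vanishing hold for \emph{every} member $X'\in|H|$, smooth or not, so your independence statement does too, and you may simply take any member through a point $y\notin(\sigma)_0$ (which exists since $h^{0}(Y,H)\geq 2$), with no density or Bertini-type argument needed.
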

\begin{proof}
Twisting the exact sequence
\begin{equation}0\rightarrow\mathcal{O}_{Y}(-X)\rightarrow\mathcal{O}_{Y}\rightarrow\mathcal{O}_{X}\rightarrow0 \nonumber \end{equation}
with $L$, and taking long exact sequence of the corresponding cohomology, we obtain
\begin{equation}0\rightarrow H^{0}(Y,L\otimes H^{-1})\rightarrow H^{0}(Y,L)\rightarrow H^{0}(X,L|_{X})\rightarrow H^{1}(Y,L\otimes H^{-1}). \nonumber \end{equation}
Since $L\otimes (K^{-1}_{Y}\otimes H^{-1})$ is nef and big, $H^{1}(Y,L\otimes H^{-1})=0$ by Kawamata--Viehweg vanishing theorem \cite{kawamata1}.
Thus
\begin{equation}h^{0}(X,L|_{X})=h^{0}(Y,L)-h^{0}(Y,L\otimes H^{-1}). \nonumber \end{equation}

Assume that $h^{0}(Y,L)>0$.
If $h^{0}(Y,L\otimes H^{-1})=0$, then $$h^{0}(X,L|_{X})=h^{0}(Y,L)>0. $$
If $h^{0}(Y,L\otimes H^{-1})> 0$, then by \cite[Lemma 15.6.2]{kol shafa}, $$h^{0}(Y,L)\geq h^{0}(Y,H)+h^{0}(Y,L\otimes H^{-1})-1\geq h^{0}(Y,L\otimes H^{-1})+1,$$
and hence $h^{0}(X,L|_{X})\geq1$.
\end{proof}
Thus, to prove Conjecture \ref{kawamata conj} for complete intersections Calabi--Yau in Fano manifolds, we only need to prove that
nef line bundles on those Fano manifolds have nontrivial global sections. Motivated by this, we define the following:
\begin{definition}\label{def of perfect Fano}
A Fano manifold $Y$ is \emph{perfect} if any nef line bundle on it has a nontrivial global section.
\end{definition}
Kawamata's conjecture \ref{kawamata conj0} predicts that any Fano manifold should be perfect.
At the moment, we only show that there are lots of examples of perfect Fano manifolds.
\begin{proposition}\label{prop on perfect Fano} The following Fano manifolds are perfect:
\begin{enumerate}
\item toric Fano manifolds;
\item Fano manifolds of dimension $n\leq4$;
\item products of perfect Fano manifolds.
\end{enumerate}
\end{proposition}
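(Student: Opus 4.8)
The common starting point is that perfectness is really a statement about Euler characteristics. For a Fano manifold $Y$ and a nef line bundle $L$, the divisor $L-K_Y$ is the sum of a nef and an ample divisor, hence ample, in particular nef and big. Kawamata--Viehweg vanishing \cite{kawamata1} then gives $H^i(Y,L)=0$ for $i>0$, so that $h^0(Y,L)=\chi(Y,L)$. Thus $Y$ is perfect if and only if $\chi(Y,L)\geq 1$ for every nef $L$, and this is the inequality I would target in each case.

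For part (3), let $Y=Y_1\times Y_2$ with $Y_1,Y_2$ perfect Fano and write $p_i$ for the projections. Since $h^1(\OO_{Y_i})=0$, the argument of Proposition \ref{reduce to irreducible} (via \cite[Ex.\ III.12.6]{hart}) gives $\Pic(Y)\cong \Pic(Y_1)\times\Pic(Y_2)$, so any $L$ has the form $p_1^*L_1\otimes p_2^*L_2$. Restricting $L$ to the fibres $Y_1\times\{y_2\}$ and $\{y_1\}\times Y_2$ shows $L_1,L_2$ are nef, and by the K\"{u}nneth formula $h^0(Y,L)=h^0(Y_1,L_1)\cdot h^0(Y_2,L_2)\geq 1$ by perfectness of the factors. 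For part (1), I would invoke the standard fact that on a complete toric variety a line bundle is nef if and only if it is globally generated (equivalently, its associated polytope is a lattice polytope); global generation on a nonempty variety forces $h^0\geq 1$, since the evaluation map is surjective at every point.

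The substantive case is (2). Here I would compute $\chi(Y,L)$ by Hirzebruch--Riemann--Roch and organise the terms using $\ell:=c_1(L)$ and the ample class $A:=L-K_Y=\ell+c_1(Y)$. The top-degree Todd term carrying no $\ell$ integrates to $\chi(\OO_Y)=1$. In dimension $4$ the remaining terms combine as
\[
\chi(Y,L)=1+\frac{1}{24}\int_Y \ell^2A^2+\frac{1}{24}\int_Y c_2(Y)\cdot \ell\cdot A,
\]
while in dimension $2$ one gets $\chi=1+\frac12\int_Y\ell A$ and in dimension $3$, $\chi=1+\frac{1}{12}\int_Y\big(\ell(\ell+A)A+\ell\,c_2(Y)\big)$. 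In each case the ``pure'' summands are intersection numbers of the nef classes $\ell$ and $A$, hence nonnegative, so the whole problem collapses to the positivity of the second Chern class against products of nef classes, i.e.\ $\int_Y c_2(Y)\cdot H_1\cdots H_{n-2}\geq 0$ for nef $H_i$.

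This last positivity is the main obstacle, and it is exactly where the Fano hypothesis is essential: for a manifold with $-K_Y$ nef one has generic nefness of $T_Y$, and combining this with Miyaoka's Chern-class inequalities yields $\int_Y c_2(Y)\cdot H_1\cdots H_{n-2}\geq 0$ for all nef $H_i$, which I would quote as the key input. I emphasise one delicate point: the fake effectivity of the paper's definition controls only $c_2(Y)\cdot L^{n-2}$ for a \emph{single} nef $L$, which suffices in dimension $3$, whereas the dimension-$4$ identity above requires the genuinely multilinear statement $c_2(Y)\cdot\ell\cdot A$ with two distinct nef classes; confirming that the known $c_2$-positivity is strong enough to cover this mixed intersection, rather than just the diagonal, is the crux of the argument. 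For $n\leq 3$ one can alternatively bypass this entirely by invoking the already-established effective non-vanishing theorem of Kawamata.
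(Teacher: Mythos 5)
Your proof is correct and follows essentially the same route as the paper: the same toric fact (nef $=$ globally generated) for (1), the same $\Pic(Y_1\times Y_2)\cong\Pic(Y_1)\times\Pic(Y_2)$ box-tensor argument for (3), and Kawamata--Viehweg vanishing plus Hirzebruch--Riemann--Roch plus $c_2$-positivity for manifolds with $-K$ nef in case (2), your rewriting in terms of $A=L-K_Y$ being only a cosmetic repackaging of the paper's expansion. The ``delicate point'' you flag --- needing the multilinear statement $\int_Y c_2(Y)\cdot H_1\cdots H_{n-2}\geq 0$ for distinct nef classes rather than powers of a single one --- is exactly what the references cited in the paper (notably \cite{peternell}, via generic nefness of $T_Y$) supply, and the paper's own proof implicitly requires the same mixed version, since its expansion contains the term $\int_X c_1(L)\,c_1(X)\,c_2(X)$.
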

\begin{proof}
(1) This is obvious from the fact that any nef line bundle on a complete toric variety is base-point-free (see \cite[Theorem 6.3.12]{cox}).

(2) We only deal with Fano 4-fold $X$ here. By Kawamata--Viehweg vanishing theorem, $H^{i}(X,L)=0$ for any nef line bundle $L$ on $X$ and $i>0$. Then Hirzebruch--Riemann--Roch formula gives
\begin{align*}h^{0}(X,L)=\chi(X,L)={}&\frac{1}{24}\int_{X} c^{4}_{1}(L)+\frac{1}{12}\int_{X} c^{3}_{1}(L) c_{1}(X)+\frac{1}{24}\int_{X}c^{2}_{1}(L) c^{2}_{1}(X)\\
  {}&+\frac{1}{24}\int_{X}c^{2}_{1}(L) c_{2}(X)+\frac{1}{24}\int_{X}c_{1}(L) c_{1}(X) c_{2}(X)+\chi(X,\mathcal{O}_{X}).
 \end{align*}
Fano condition implies that $\chi(X,\mathcal{O}_{X})=1$. By the pseudo-effectiveness of second Chern classes for Fano manifolds (see \cite[Theorem 6.1]{miyaoka}, \cite[Theorem 2.4]{kmmt}, and \cite[Theorem 1.3]{peternell}), those terms with $c_{2}(X)$ are non-negative. Hence
$h^{0}(X,L)>0$.

(3) For two Fano manifolds $X$ and $Y$, as $H^{1}(X,\mathcal{O}_{X})=0$, we know $\Pic(X\times Y)\cong \Pic(X)\times \Pic(Y)$ (see e.g. \cite[Ex. III.12.6]{hart}), i.e. a line bundle on $X\times Y$ is the box tensor of line bundles on $X$ and $Y$. Furthermore, it is obvious that the box tensor is nef if and only if so is each factor, and the box tensor has a global section if and only if so does each factor.
\end{proof}
To sum up, we verify Conjecture \ref{kawamata conj} for general complete intersection Calabi--Yau in perfect Fano manifolds.
\begin{theorem}\label{verification for CICY}
Let $Y$ be a perfect Fano manifold, $\{H_{i}\}_{i=1}^{m}$ a sequence of base-point-free ample line bundles on $Y$ such that $\otimes_{i=1}^{m}H_{i}\cong K_{Y}^{-1}$.
Let $X\subseteq Y$ be a general complete intersection of dimension $n\geq3$ defined by common zero locus of sections 
of $\{H_{i}\}_{i=1}^{m}$.

Then Conjecture \ref{kawamata conj} is true for $X$.
\end{theorem}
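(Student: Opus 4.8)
The plan is to transport a global section from the perfect Fano manifold $Y$ down to $X$ by peeling off the defining hypersurfaces one at a time. Set $X_0 = Y$ and $X_i = \{s_1 = \cdots = s_i = 0\}$, so that $X_m = X$ and each $X_i$ is a smooth member of $\lvert H_i|_{X_{i-1}} \rvert$ inside $X_{i-1}$. Since $\dim X_i = \dim Y - i \geq n \geq 3$ for all $i$, the Lefschetz hyperplane theorem provides compatible isomorphisms $\Pic(X_{i-1}) \cong \Pic(X_i)$ via restriction, so the given nef and big $L$ on $X$ extends uniquely to line bundles $L_i$ on $X_i$ with $L_i = \tilde{L}|_{X_i}$ for a single $\tilde{L} = L_0$ on $Y$ and $L_m = L$. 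The goal is to show $h^0(X_i, L_i) > 0$ for all $i$ by induction on $i$, beginning with the base case $i = 0$.

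First I would check that every $L_i$ is nef. Adjunction together with $\otimes_j H_j \cong K_Y^{-1}$ gives $K_{X_{i-1}}^{-1} = (H_i \otimes \cdots \otimes H_m)|_{X_{i-1}}$, so $K_{X_{i-1}}^{-1} \otimes (H_i|_{X_{i-1}})^{-1} = (H_{i+1} \otimes \cdots \otimes H_m)|_{X_{i-1}}$ is base-point-free, hence nef. As $\dim X_{i-1} \geq n + 1 \geq 4$, Theorem \ref{kollar thm} identifies $\overline{NE}(X_i) \cong \overline{NE}(X_{i-1})$, so nef classes match under restriction. Running this up the tower from $X_m = X$ to $Y$ shows that $\tilde{L}$ and each $L_i$ are nef. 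Because $Y$ is perfect, the nef bundle $\tilde{L}$ satisfies $h^0(Y, \tilde{L}) > 0$, which is the base case of the induction.

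For the inductive step from $X_{i-1}$ to $X_i$ with $i < m$, I would apply the non-vanishing implication of Proposition \ref{compare sections} with $(Y, H, L)$ replaced by $(X_{i-1}, H_i|_{X_{i-1}}, L_{i-1})$. The hypotheses hold: $H_i|_{X_{i-1}}$ is ample and base-point-free (so $h^0 \geq 2$ as $X_{i-1}$ is positive-dimensional), $X_i$ is a smooth member of its linear system by the genericity assumption, $K_{X_{i-1}}^{-1} \otimes (H_i|_{X_{i-1}})^{-1}$ is nef, and the residual bundle $(H_{i+1} \otimes \cdots \otimes H_m)|_{X_{i-1}}$ is ample, so $L_{i-1} \otimes (K_{X_{i-1}}^{-1} \otimes H_i^{-1})$ is the tensor of a nef and an ample bundle, hence big. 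Thus $h^0(X_{i-1}, L_{i-1}) > 0$ yields $h^0(X_i, L_i) > 0$, and iterating gives $h^0(X_{m-1}, L_{m-1}) > 0$.

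The main obstacle is the final cut $i = m$: here the residual $K_{X_{m-1}}^{-1} \otimes H_m^{-1}$ is trivial, so the bigness hypothesis of Proposition \ref{compare sections} degenerates into bigness of $L_{m-1}$ itself, which can genuinely fail — a nef divisor restricting to a big divisor on an ample divisor need not be big (as already with $\mathcal{O}(0,1)$ on $\mathbb{P}^1 \times \mathbb{P}^d$). I would therefore bypass the proposition at this step and argue directly: the exact sequence $0 \to L_{m-1} \otimes H_m^{-1} \to L_{m-1} \to L \to 0$ gives $h^0(X, L) \geq h^0(X_{m-1}, L_{m-1}) - h^0(X_{m-1}, L_{m-1} \otimes H_m^{-1})$ with no vanishing theorem required, and \cite[Lemma 15.6.2]{kol shafa} together with $h^0(X_{m-1}, H_m|_{X_{m-1}}) \geq 2$ forces $h^0(X_{m-1}, L_{m-1}) > h^0(X_{m-1}, L_{m-1} \otimes H_m^{-1})$, so $h^0(X, L) > 0$. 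Since $K_X \cong \mathcal{O}_X$ gives $c_1(X) = 0$ and $L$ is nef and big, this establishes Conjecture \ref{kawamata conj} for $X$. Apart from this non-ascent of bigness at the terminal cut, every verification is a routine consequence of the numerical identity $\otimes_i H_i \cong K_Y^{-1}$.
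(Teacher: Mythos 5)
Your proof is correct, and its skeleton is the same as the paper's: the tower $X = X_m \subset \cdots \subset X_1 \subset X_0 = Y$, Theorem \ref{kollar thm} to transport nefness up the tower, perfectness of $Y$ for the base case, and Proposition \ref{compare sections} for the descent of non-vanishing. Where you differ is the last cut, and you are right to differ: the paper's proof just says ``inductively apply Theorem \ref{kollar thm} and Proposition \ref{compare sections}'', but at the step from $X_{m-1}$ to $X$ the bigness hypothesis of Proposition \ref{compare sections} becomes bigness of $L_{m-1}$ itself (the residual bundle $K_{X_{m-1}}^{-1}\otimes (H_m|_{X_{m-1}})^{-1}$ being trivial), and this can genuinely fail: already with $m=1$, $Y=\bP^1\times\bP^n$, $H_1=K_Y^{-1}$, the bundle $L_Y=\mathcal{O}(0,1)$ is nef and not big, yet restricts to a nef and big bundle on the anticanonical hypersurface $X$, so the proposition as stated is not applicable at that step. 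Your repair --- the inequality $h^0(X,L)\geq h^0(X_{m-1},L_{m-1})-h^0(X_{m-1},L_{m-1}\otimes H_m^{-1})$ coming from left-exactness alone, combined with Koll\'ar's Lemma 15.6.2 and $h^0(X_{m-1},H_m|_{X_{m-1}})\geq 2$ --- is exactly the right one; indeed, the proof of the ``Furthermore'' clause of Proposition \ref{compare sections} never actually needs the Kawamata--Viehweg vanishing (only the inequality, not the equality, is used), so the non-vanishing transfer holds without the bigness hypothesis, which is the cleanest way to phrase your fix. Your remaining verifications (Picard and cone identifications in ambient dimension $\geq 4$, $h^0(H_i|_{X_{i-1}})\geq 2$ from ampleness plus base-point-freeness, and the residual bundle being ample so that nef $\otimes$ ample is big at the intermediate steps) agree with the paper's intended argument; the net effect is that your write-up both proves the theorem and closes the small gap left open by the paper's one-line proof.
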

\begin{proof}
There is a sequence of projective manifolds
\begin{equation}X=X_{m}\subseteq X_{m-1}\subseteq\cdots\subseteq X_{1}\subseteq Y \nonumber \end{equation}
with $X_{i}=\cap_{j=1}^{i}s^{-1}_{j}(0)$, the common zero locus of sections of $\{H_{j}\}_{j=1}^{i}$. We inductively apply Theorem \ref{kollar thm} and Proposition \ref{compare sections} to $(X_{i},H_{i+1}|_{X_{i}})_{i=1}^{m}$, then any nef and big line bundle $L$ on $X$ comes from the restriction of a nef line bundle $L_Y$ on $Y$ and $h^{0}(X,L)>0$ since $h^0(Y, L_Y)>0$ by definition of perfect Fano manifolds.
\end{proof}

\subsection{$\Td_4$ of CICY}
In this subsection, we consider complete intersection Calabi--Yau varities (CICY, for short) in products of projective spaces.
Fix positive integers $n_1, \ldots, n_m$, a CICY $X$ in $\mathcal{P}(\bn)=\bP^{n_1}\times \cdots \times \bP^{n_m}$ is provided by the {\em configuration matrix}
$$
[\bn| \bq]= \left[
\begin{array}{c|ccc}
n_1 & q_1^1 & \cdots & q_K^1 \\
\vdots & \vdots & & \vdots \\
n_m & q_1^m & \cdots & q_K^m \\
\end{array}
\right]
$$
which encodes the dimensions of the ambient projective spaces and the (multi)-degrees of
the defining polynomials. Here $c_1(X)=0$ implies that
\begin{align}\label{CY condition}
\sum_{\alpha=1}^K q_\alpha^r=n_r+1
\end{align}
for all $1\leq r\leq m$. We say that $\bq>0$ if $q_\alpha^r>0$ for all $\alpha$ and $r$. Our goal is to prove the following:
\begin{theorem}\label{CICY td4}
Let $X=[\bn| \bq]\subset\mathcal{P}(\bn)$ be a general CICY with $\bq>0$. Then
$\td_4(X)$ is quasi-effective.
\end{theorem}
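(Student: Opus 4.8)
The plan is to convert the fake effectivity of $\td_4(X)$ into an explicit positivity statement about the configuration matrix $[\bn|\bq]$, and then to prove that statement. Write $h_r$ for the hyperplane class of the $r$-th factor of $\mathcal{P}(\bn)=\bP^{n_1}\times\cdots\times\bP^{n_m}$ and $d_\alpha=\sum_r q^r_\alpha h_r$ for the class of the $\alpha$-th defining section. By the Lefschetz hyperplane theorem together with the cone comparison of Theorem \ref{kollar thm} (applied inductively exactly as in the proof of Theorem \ref{verification for CICY}, the intermediate anticanonical twists being nef because $\bq>0$), every nef line bundle $L$ on $X$ is the restriction of some $\OO(\sum_r a_r h_r)$ with all $a_r\ge 0$. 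Since $\td_4(X)=\tfrac{1}{720}\big(3c_2^2(X)-c_4(X)\big)$ when $c_1(X)=0$, and $\int_X\gamma=\int_{\mathcal{P}(\bn)}\gamma\cdot\prod_\alpha d_\alpha$ by the projection formula, fake effectivity becomes the assertion that
$$\int_{\mathcal{P}(\bn)}\big(3c_2^2(X)-c_4(X)\big)\Big(\sum_r a_r h_r\Big)^{n-4}\prod_\alpha d_\alpha\ \ge\ 0\qquad(a_r\ge 0).$$
As all $a_r\ge 0$, it suffices to establish this for each monomial $\mathbf{h}^{\mathbf{b}}=\prod_r h_r^{b_r}$ with $|\mathbf{b}|=n-4$ separately.

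Next I would read off the Chern data from the virtual splitting $T_X=T_{\mathcal{P}(\bn)}|_X-\bigoplus_\alpha\OO(d_\alpha)$. The Euler sequences give $\ch(T_X)=\sum_r(n_r+1)e^{h_r}-m-\sum_\alpha e^{d_\alpha}$, so the power sums are $p_k:=k!\,\ch_k(T_X)=\sum_r(n_r+1)h_r^k-\sum_\alpha d_\alpha^k$, and the Calabi--Yau condition \eqref{CY condition} is precisely $p_1=0$. Newton's identities with $c_1=0$ then give $c_2=-\tfrac12 p_2$ and $c_4=\tfrac12 c_2^2-\tfrac14 p_4$, whence
$$8\big(3c_2^2-c_4\big)=5p_2^2+2p_4=20\,c_2^2+2\sum_r(n_r+1)h_r^4-2\sum_\alpha d_\alpha^4.$$
A clean intermediate point, worth isolating first, is that the ambient representative of $c_2(X)$ has \emph{non-negative} coefficients in the $h_r$: its $h_r^2$-coefficient is $\tfrac12\sum_\alpha q^r_\alpha(q^r_\alpha-1)\ge 0$ (using \eqref{CY condition} and $q^r_\alpha\ge 1$) and its $h_rh_s$-coefficient is $\sum_\alpha q^r_\alpha q^s_\alpha\ge 0$. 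Hence $c_2^2$, $\sum_r(n_r+1)h_r^4$, and $\mathbf{h}^{\mathbf{b}}\prod_\alpha d_\alpha$ all have non-negative coefficients, so the first two summands above contribute non-negatively to the integral.

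The problem therefore collapses to controlling the last term, i.e. to proving
$$2\int_{\mathcal{P}(\bn)}\Big(\sum_\alpha d_\alpha^4\Big)\mathbf{h}^{\mathbf{b}}\prod_\beta d_\beta\ \le\ \int_{\mathcal{P}(\bn)}\Big(20c_2^2+2\sum_r(n_r+1)h_r^4\Big)\mathbf{h}^{\mathbf{b}}\prod_\beta d_\beta.$$
I expect this to be the main obstacle, and its delicacy is essential. The polynomial $5p_2^2+2p_4$ is \emph{not} coefficient-wise non-negative: its $h_r^4$-coefficient equals $5P_r^2+2R_r$ with $P_r=-\sum_\alpha q^r_\alpha(q^r_\alpha-1)$ and $R_r=-\sum_\alpha q^r_\alpha\big((q^r_\alpha)^3-1\big)$, which is strictly negative already for a bidegree $(2,n_2+1)$ hypersurface in $\bP^1\times\bP^{n_2}$. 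Such negative coefficients are harmless only because they are annihilated by the truncation relations $h_r^{n_r+1}=0$ in $H^*(\mathcal{P}(\bn))$; so any correct argument must keep track of these relations rather than argue coefficient by coefficient. The inequality is close to sharp (it degenerates in the lowest admissible degrees and when all $q^r_\alpha=1$), confirming that it genuinely uses both \eqref{CY condition} and $q^r_\alpha\ge 1$.

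For the remaining estimate my plan is to fix $\mathbf{h}^{\mathbf{b}}$, expand each of $\int_{\mathcal{P}(\bn)}d_\alpha^4\,\mathbf{h}^{\mathbf{b}}\prod_\beta d_\beta$ and the two positive integrals as explicit sums over the entries $q^r_\alpha$ and the dimensions $n_r$ (each integral extracting the coefficient of $h_1^{n_1}\cdots h_m^{n_m}$), and then match the negative $d_\alpha^4$-contributions against the $c_2^2$- and $h_r^4$-contributions, exploiting that every surviving monomial has each $h_r$-exponent at most $n_r$. The cases $n\le 4$ are automatic (for $n\le 3$ the class $\td_4$ vanishes for dimension reasons, and for $n=4$ the integral is simply $\chi(X,\OO_X)=2$), which supplies a base case; from there I would induct on the number of factors $m$ (or on the codimension $K$), reducing a configuration to one with fewer projective factors, or one fewer equation, while preserving \eqref{CY condition}, so that the sharp low-degree inequalities propagate to the general case. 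The central difficulty throughout is the interaction between the negativity coming from $\sum_\alpha d_\alpha^4$ and the truncation relations, and making that interaction quantitative is where the real work lies.
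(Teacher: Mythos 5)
Your reduction is sound, and it in fact runs parallel to the paper's own setup: the identity $8(3c_2^2-c_4)=5p_2^2+2p_4=20c_2^2+2\sum_r(n_r+1)h_r^4-2\sum_\alpha d_\alpha^4$ is, up to normalization, the paper's expansion (\ref{BBB}); the observation that nef classes on $X$ come from non-negative combinations of the $h_r$ via Theorem \ref{kollar thm} is exactly the observation made before Lemma \ref{Brrrr}; and the negative coefficients you isolate (the $h_r^4$-coefficient $5P_r^2+2R_r<0$ precisely when one $q_{\alpha_0}^r=2$ and all other $q_\alpha^r=1$, and the analogous failure in the $h_r^3h_s$-direction when all $q_\alpha^r=1$) are precisely the exceptional cases of Lemma \ref{B geq}. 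But your proposal stops at the point where the paper's proof actually begins. The inequality you yourself flag as ``the main obstacle'' --- absorbing $2\sum_\alpha d_\alpha^4$ into $20c_2^2+2\sum_r(n_r+1)h_r^4$ modulo the truncation relations $h_r^{n_r+1}=0$ --- is never proved. What you offer instead is a plan (expand all integrals, match monomials, induct on $m$ or on $K$), and that plan has a concrete defect: dropping an equation or a projective factor destroys the Calabi--Yau condition \eqref{CY condition}, which your own analysis shows is essential (the inequality ``degenerates in the lowest admissible degrees''), so there is no evident way to formulate an inductive hypothesis that still applies to the reduced configuration.

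The paper closes this gap not by induction but by a direct grouping argument. In Lemmas \ref{Brrrr} and \ref{Brrrs} the negative terms $B^{rrrr}J_r^4$ (resp.\ the terms $B^{rrrs}J_r^3J_s$ with $r$ in the all-ones index set) are paired with specific non-negative terms, the resulting cycle is pushed to $\mathcal{P}(\bn)$ where the factor $\prod_\alpha\left(\sum_s q_\alpha^s H_s\right)$ is multiplied out explicitly, and the key combinatorial input is Lemma \ref{positive polynomial}: the polynomial
\begin{equation*}
\left(-x_1+\sum_{\alpha=1}^K\sum_{s=2}^m q_\alpha^s x_s\right)\cdot\prod_{\alpha=1}^K\left(x_1+\sum_{s=2}^m q_\alpha^s x_s\right)+x_1^{K+1}
\end{equation*}
has non-negative coefficients. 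The correction term $x_1^{K+1}$ is then killed by the relation $H_r^{n_r+1}=0$, which is available exactly because \eqref{CY condition} forces $n_r=K$ (resp.\ $n_r=K-1$) in the two bad cases. Without this lemma, or a worked substitute for it, your argument establishes only the easy half of the theorem (non-negativity of every contribution other than $-2\sum_\alpha d_\alpha^4$), and the statement itself remains unproved.
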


From now, $X=[\bn| \bq]\subset\mathcal{P}(\bn)$ is a general CICY with $\bq>0$. For each $r$, denote by $H_r$ the pull-back of hyperplane of $\bP^{n_r}$ to $\mathcal{P}(\bn)$, and $J_r=H_r|_X.$ It is easy to compute Chern classes of $X$ in terms of $J_r$ (cf. \cite[B.2.1]{M CY5}), for example, we have
\begin{align*}
c_2(X){}&=\sum_{r,s=1}^m c_2^{rs} J_r J_s \\
{}&= \sum_{r,s=1}^m \frac{1}{2}\left[-(n_r+1)\delta^{rs} + \sum_{\alpha=1}^K q_\alpha^r q_\alpha^s \right] J_r J_s,
 \end{align*}
 and
 \begin{align*}
     c_4 (X) {}&= \sum_{r,s,t,u=1}^m c_4^{rstu} J_r J_s J_t J_u \\
     {}&= \sum_{r,s,t,u=1}^m\frac{1}{4}\left[ -(n_r+1)\delta^{rstu} + \sum_{\alpha=1}^K q_\alpha^r q_\alpha^s q_\alpha^t q_\alpha^u + 2 c_2^{rs} c_2^{tu} \right] J_r J_s J_t J_u.
\end{align*}
Hence
\begin{align*}
2880\td_4(X)={}&4(3c_2^2(X)-c_4(X))\\
={}&\sum_{r,s,t,u=1}^m (12c_2^{rs}c_2^{tu} -4c_4^{rstu})J_r J_sJ_t J_u \\
={}& \sum_{r,s,t,u=1}^m\left[10c_2^{rs}c_2^{tu} +(n_r+1)\delta^{rstu} - \sum_{\alpha=1}^K q_\alpha^r q_\alpha^s q_\alpha^t q_\alpha^u \right] J_r J_s J_t J_u\\
={}& \sum_{r,s,t,u=1}^m\Bigg[\frac{5}{2}\left(-(n_r+1)\delta^{rs} + \sum_{\alpha=1}^K q_\alpha^r q_\alpha^s \right) \left(-(n_t+1)\delta^{tu} + \sum_{\alpha=1}^K q_\alpha^t q_\alpha^u \right)\\
{}&\qquad\qquad+(n_r+1)\delta^{rstu} - \sum_{\alpha=1}^K q_\alpha^r q_\alpha^s q_\alpha^t q_\alpha^u \Bigg] J_r J_s J_t J_u.
\end{align*}
Here we denote the coefficient of $J_r J_s J_t J_u $ in the above equation to be $A^{rstu}$. Rewrite the equation,
\begin{align}
\label{BBB}2880\td_4(X)={}&\sum_{r=1}^m B^{rrrr}J_r^4+ \sum_{\substack{1\leq r, s\leq m\\ r\neq s}} B^{rrrs}J_r^3J_s+ \sum_{1\leq r<s\leq m} B^{rrss}J_r^2J_s^2\\
{}&+\sum_{\substack{1\leq r,s,t\leq m\\ r\neq s, r\neq t, s<t}} B^{rrst}J_r^2J_sJ_t+\sum_{1\leq r<s<t<u\leq m} B^{rstu}J_r J_sJ_t J_u.\nonumber
\end{align}
Here by symmetry,
\begin{align*}
B^{rrrr}={}& A^{rrrr}=\frac{5}{2}\left(-(n_r+1) + \sum_{\alpha=1}^K (q_\alpha^r )^2\right)^2+(n_r+1)- \sum_{\alpha=1}^K (q_\alpha^r)^4;\\
B^{rrrs}={}&4A^{rrrs}=10\left(-(n_r+1) + \sum_{\alpha=1}^K (q_\alpha^r)^2 \right) \left( \sum_{\alpha=1}^K q_\alpha^r q_\alpha^s \right)- 4\sum_{\alpha=1}^K (q_\alpha^r)^3 q_\alpha^s;\\
B^{rrss}={}&2A^{rrss}+4A^{rsrs}\\
={}&10\left( \sum_{\alpha=1}^K q_\alpha^r q_\alpha^s \right)^2+5\left(-(n_r+1) + \sum_{\alpha=1}^K (q_\alpha^r)^2 \right)\left(-(n_s+1) + \sum_{\alpha=1}^K (q_\alpha^s)^2 \right)- 6\sum_{\alpha=1}^K (q_\alpha^r q_\alpha^s)^2;\\
B^{rrst}={}&4A^{rrst}+8A^{rsrt}\\
={}&10\left(-(n_r+1) + \sum_{\alpha=1}^K (q_\alpha^r)^2 \right) \left( \sum_{\alpha=1}^K q_\alpha^s q_\alpha^t \right)+20\left( \sum_{\alpha=1}^K q_\alpha^r q_\alpha^s \right)\left( \sum_{\alpha=1}^K q_\alpha^r q_\alpha^t \right)-12\sum_{\alpha=1}^K (q_\alpha^r)^2 q_\alpha^s q_\alpha^t;\\
B^{rstu}={}&8A^{rstu}+8A^{rtsu}+8A^{rust}.
\end{align*}
We have the following inequalities for these coefficients.
\begin{lemma}\label{B geq}In equation (\ref{BBB}),
\begin{enumerate}
\item $B^{rrrr}\geq 0$ unless $q_{\alpha_0}^r=2$ for some $\alpha_0$ and $q_\alpha^r=1$ for all $\alpha\neq \alpha_0$;
\item $B^{rrrs}\geq 0$ unless $q_\alpha^r=1$ for all $\alpha$;
\item $B^{rrss} \geq 0$;
\item $B^{rrst}\geq 0$;
\item $B^{rstu}\geq 0$.
\end{enumerate}
\end{lemma}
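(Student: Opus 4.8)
The plan is to reduce all five inequalities to two elementary principles. The first is that the Calabi--Yau condition (\ref{CY condition}) rewrites the recurring quantity $-(n_r+1)+\sum_{\alpha}(q_\alpha^r)^2$ as $\sum_{\alpha}q_\alpha^r(q_\alpha^r-1)$, which is nonnegative term by term since every $q_\alpha^r$ is a positive integer; likewise $n_r+1-\sum_{\alpha}(q_\alpha^r)^4=-\sum_{\alpha}q_\alpha^r((q_\alpha^r)^3-1)$. The second is the ``diagonal domination'' identity $(\sum_\alpha a_\alpha)(\sum_\beta b_\beta)=\sum_\alpha a_\alpha b_\alpha+\sum_{\alpha\neq\beta}a_\alpha b_\beta$, so that for nonnegative $a_\alpha,b_\alpha$ the product of two sums dominates its diagonal $\sum_\alpha a_\alpha b_\alpha$. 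Writing $D_r=\sum_\alpha q_\alpha^r(q_\alpha^r-1)\geq 0$ and $P_{rs}=\sum_\alpha q_\alpha^r q_\alpha^s>0$, each coefficient $B$ becomes a nonnegative combination of such products from which a single diagonal sum must be subtracted.

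I expect (3), (4), (5) to follow from diagonal domination alone. For (3), $10P_{rs}^2\geq 10\sum_\alpha(q_\alpha^r q_\alpha^s)^2\geq 6\sum_\alpha(q_\alpha^r q_\alpha^s)^2$, while $5D_rD_s\geq 0$. For (4), $20P_{rs}P_{rt}$ dominates $20\sum_\alpha(q_\alpha^r)^2q_\alpha^sq_\alpha^t$, which beats the subtracted $12\sum_\alpha(q_\alpha^r)^2q_\alpha^sq_\alpha^t$, and $10D_rP_{st}\geq 0$. For (5), since all four indices are distinct the Kronecker deltas drop out and $B^{rstu}=20(P_{rs}P_{tu}+P_{rt}P_{su}+P_{ru}P_{st})-24Q$ with $Q=\sum_\alpha q_\alpha^r q_\alpha^s q_\alpha^t q_\alpha^u$; each of the three products dominates $Q$, giving $B^{rstu}\geq 60Q-24Q\geq 0$.

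For (2) the diagonal no longer suffices and I would retain the off-diagonal terms. Splitting $10D_rP_{rs}$ into its diagonal and off-diagonal parts, the diagonal contribution to $B^{rrrs}$ is $\sum_\alpha(q_\alpha^r)^2(6q_\alpha^r-10)q_\alpha^s$, which is nonnegative on the indices with $q_\alpha^r\geq 2$ but equals $-4q_\alpha^s$ on each index with $q_\alpha^r=1$. The resulting deficit $4\sum_{q_\alpha^r=1}q_\alpha^s$ is covered by the nonnegative off-diagonal sum $10\sum_{\alpha\neq\beta}q_\alpha^r(q_\alpha^r-1)q_\beta^rq_\beta^s$: if some $q_{\alpha_0}^r\geq 2$ exists, keeping $\alpha_0$ as the first factor already contributes at least $20\sum_{q_\beta^r=1}q_\beta^s$, which dominates the deficit. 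If instead every $q_\alpha^r=1$, then $D_r=0$ and $B^{rrrs}=-4(n_s+1)<0$, which is precisely the stated exception.

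The main obstacle is (1), since its obstruction $\tfrac52 D_r^2$ is the square of a single-row sum and must be matched against the index-wise deficits in $\sum_\alpha((q_\alpha^r)^4-q_\alpha^r)$. Setting $x_\alpha=q_\alpha^r$, the per-index inequality $\tfrac52 x^2(x-1)^2\geq x^4-x$ holds for $x=1$ and for $x\geq 3$ but fails by exactly $4$ at $x=2$; retaining the cross terms of $D_r^2$ then yields $B^{rrrr}\geq 5\sum_{\alpha<\beta}x_\alpha(x_\alpha-1)x_\beta(x_\beta-1)-4\,\#\{\alpha:x_\alpha=2\}$. Let $S=\{\alpha:x_\alpha\geq 2\}$. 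If $|S|\leq 1$ the cross sum vanishes, and $B^{rrrr}\geq 0$ exactly unless that one index has $x_\alpha=2$, which is the exception in the statement; if $|S|\geq 2$, every surviving cross product is at least $4$, so $5\cdot\binom{|S|}{2}\cdot 4=10|S|(|S|-1)$ dominates $4\,\#\{\alpha:x_\alpha=2\}\leq 4|S|$ and $B^{rrrr}\geq 0$. The delicate point throughout (1) and (2) is not the crude inequalities but bookkeeping the failure locus so that it matches the exceptional configurations named in the lemma.
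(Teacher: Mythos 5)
Your proposal is correct and takes essentially the same approach as the paper's own proof: both rewrite the recurring quantities via the Calabi--Yau condition (\ref{CY condition}), dispose of (3)--(5) by diagonal domination of products of sums, and prove (1)--(2) by a per-index inequality (which fails only at $q_\alpha^r=2$, resp.\ at $q_\alpha^r=1$) whose deficits are then covered by the cross terms of $D_r^2$, resp.\ of $D_r P_{rs}$. The only differences are bookkeeping --- the paper splits $\{\alpha: q_\alpha^r\geq 2\}$ further into $\{q_\alpha^r=2\}$ and $\{q_\alpha^r\geq 3\}$ in (1), and in (2) runs a chain of inequalities instead of your diagonal/off-diagonal split --- and your case analysis reproduces exactly the same exceptional configurations.
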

\begin{proof}Keep in mind that $q_\alpha^r$ are all positive integers and we will often use (\ref{CY condition}).

(1) Divide the index set into three part:
\begin{align*}
S_1={}&\{\alpha\mid q_{\alpha}^r=1\};\\
S_2={}&\{\alpha\mid q_{\alpha}^r=2\};\\
S_3={}&\{\alpha\mid q_{\alpha}^r\geq 3\}.
\end{align*}
The goal is to show that $B^{rrrr}\geq 0$ if and only if either $|S_3|>0$ or $|S_2|\ne 1$. The only if part is easy.
We show the if part. We have
\begin{align*}
{}&B^{rrrr}\\={}&\frac{5}{2}\left(-(n_r+1) + \sum_{\alpha=1}^K (q_\alpha^r )^2\right)^2+(n_r+1)- \sum_{\alpha=1}^K (q_\alpha^r)^4\\
={}&\frac{5}{2}\left( \sum_{\alpha\in S_2\cup S_3} \Big((q_\alpha^r )^2-q_\alpha^r\Big)\right)^2+\sum_{\alpha\in S_2\cup S_3} \Big(q_\alpha^r-(q_\alpha^r)^4\Big)\\
\geq {}&\frac{5}{2} \sum_{\alpha\in S_3} \Big((q_\alpha^r )^2-q_\alpha^r\Big)^2+
\frac{5}{2}\left( \sum_{\alpha\in S_2} \Big((q_\alpha^r )^2-q_\alpha^r\Big)\right)\left( \sum_{\alpha\in S_2\cup S_3} \Big((q_\alpha^r )^2-q_\alpha^r\Big)\right)+\sum_{\alpha\in S_2\cup S_3} \Big(q_\alpha^r-(q_\alpha^r)^4\Big)\\
\geq{}&\frac{5}{2}\left( \sum_{\alpha\in S_2} \Big((q_\alpha^r )^2-q_\alpha^r\Big)\right)\left( \sum_{\alpha\in S_2\cup S_3} \Big((q_\alpha^r )^2-q_\alpha^r\Big)\right)+\sum_{\alpha\in S_2} \Big(q_\alpha^r-(q_\alpha^r)^4\Big)\\
={}&5|S_2|\left( 2|S_2|+ \sum_{\alpha\in S_3} \Big((q_\alpha^r )^2-q_\alpha^r\Big)\right)-14|S_2|.
\end{align*}
Here the first inequality is just easy computation and for the second inequality, we use the fact that if $q\geq 3$, then
$$
\frac{5}{2}(q^2-q)^2+q-q^4=\frac{5}{2}q(q-1)(3q^2-7q-2)>0.
$$
If $|S_3|>0$, then $\sum_{\alpha\in S_3} ((q_\alpha^r )^2-q_\alpha^r)\geq 6$ and hence the above inequality gives
$$
B^{rrrr}\geq 30|S_2|-14|S_2|\geq 0.
$$
If $|S_2|\ne 1$, then the above inequality gives
$$
B^{rrrr}\geq 10|S_2|^2-14|S_2|\geq 0.
$$

(2) Divide the index set into two part:
\begin{align*}
S_1={}&\{\alpha\mid q_{\alpha}^r=1\};\\
S^\prime_2={}&\{\alpha\mid q_{\alpha}^r\geq 2\}.
\end{align*}
The goal is to show that $B^{rrrs}\geq 0$ if and only if $|S^\prime_2|>0$. The only if part is easy.
We show the if part. Assume that $|S^\prime_2|>0$, then
\begin{align*}
B^{rrrs}={}&10\left(-(n_r+1) + \sum_{\alpha=1}^K (q_\alpha^r)^2 \right) \left( \sum_{\alpha=1}^K q_\alpha^r q_\alpha^s \right)- 4\sum_{\alpha=1}^K (q_\alpha^r)^3 q_\alpha^s\\
={}&10\left(\sum_{\alpha=1}^K \Big((q_\alpha^r)^2-q_\alpha^r\Big) \right) \left( \sum_{\alpha=1}^K q_\alpha^r q_\alpha^s \right)- 4\sum_{\alpha=1}^K (q_\alpha^r)^3 q_\alpha^s\\
={}&10\left(\sum_{\alpha\in S^\prime_2} \Big((q_\alpha^r)^2-q_\alpha^r\Big) \right) \left( \sum_{\alpha=1}^K q_\alpha^r q_\alpha^s \right)- 4\sum_{\alpha=1}^K (q_\alpha^r)^3 q_\alpha^s\\
\geq{}& 5\left(\sum_{\alpha\in S^\prime_2} (q_\alpha^r)^2 \right) \left( \sum_{\alpha=1}^K q_\alpha^r q_\alpha^s \right)- 4\sum_{\alpha\in S_1\cup S^\prime_2} (q_\alpha^r)^3 q_\alpha^s\\
\geq{}&\left(\sum_{\alpha\in S^\prime_2} (q_\alpha^r)^2 \right) \left( \sum_{\alpha=1}^K q_\alpha^r q_\alpha^s \right)- 4\sum_{\alpha\in S_1} (q_\alpha^r)^3 q_\alpha^s\\
\geq{}&4 \left( \sum_{\alpha=1}^K q_\alpha^r q_\alpha^s \right)- 4\sum_{\alpha\in S_1} q_\alpha^s\geq 0.
\end{align*}
Here for the first inequality we use the fact that $2(q^2-q)\geq q^2$ for $q\geq 2$, the second is just easy computation, and for the last inequality we use the fact that $\sum_{\alpha\in S^\prime_2} (q_\alpha^r)^2\geq 4$ since $|S^\prime_2|>0$.

(3) Recall that $$-(n_r+1) + \sum_{\alpha=1}^K (q_\alpha^r )^2=\sum_{\alpha=1}^K \Big((q_\alpha^r )^2-q_\alpha^r\Big)\geq 0.$$
 It is easy to see that
\begin{align*}B^{rrss}\geq{}&10\left( \sum_{\alpha=1}^K q_\alpha^r q_\alpha^s \right)^2- 6\sum_{\alpha=1}^K (q_\alpha^r q_\alpha^s)^2\\
\geq {}&4\left( \sum_{\alpha=1}^K q_\alpha^r q_\alpha^s \right)^2\geq 0.
\end{align*}

(4) Similarly, it is easy to see that
\begin{align*}B^{rrst}\geq {}&20\left( \sum_{\alpha=1}^K q_\alpha^r q_\alpha^s \right)\left( \sum_{\alpha=1}^K q_\alpha^r q_\alpha^t \right)-12\sum_{\alpha=1}^K (q_\alpha^r)^2 q_\alpha^s q_\alpha^t\\
\geq{}& 8\left( \sum_{\alpha=1}^K q_\alpha^r q_\alpha^s \right)\left( \sum_{\alpha=1}^K q_\alpha^r q_\alpha^t \right)\geq 0.
\end{align*}

(5) For $r<s<t<u$, it is easy to see that
\begin{align*}A^{rstu} ={}&\frac{5}{2}\left(\sum_{\alpha=1}^K q_\alpha^r q_\alpha^s \right) \left(\sum_{\alpha=1}^K q_\alpha^t q_\alpha^u \right)- \sum_{\alpha=1}^K q_\alpha^r q_\alpha^s q_\alpha^t q_\alpha^u\\
\geq {}&\frac{5}{2}\sum_{\alpha=1}^K q_\alpha^r q_\alpha^s q_\alpha^t q_\alpha^u - \sum_{\alpha=1}^K q_\alpha^r q_\alpha^s q_\alpha^t q_\alpha^u\\
\geq {}&0.
\end{align*}
Similarly, $A^{rtsu}\geq 0$ and $A^{rust}\geq 0$. Hence $B^{rstu}\geq 0$.
\end{proof}

Note that, since $J_r$'s are nef divisors by definition, it is easy to see that if all the coefficients in (\ref{BBB}) are non-negative, then $\td_4(X)$ is quasi-effective. We need to deal with the case when the coefficients are not non-negative in the following two lemmas. Also note that since we consider $X$ to be a general CICY in $\PP(\bn)$, the nef cones of $X$ and $\PP(\bn)$ concides by Theorem \ref{kollar thm}
 (see proof of Proposition \ref{verification for CICY}). Hence to check the quasi-effectivity of a cycle on $X$, we can view it as a cycle on $\PP(\bn)$ since quasi-effectivity is tested by nef divisors. We will always use this observation.

For two cycles $C$ and $C'$, we will write $C\succeq C'$ if $C-C'$ is quasi-effective. We denote the index set $\RR:=\{r\mid q_\alpha^r=1 \text{ for all } \alpha\}$.

\begin{lemma}\label{Brrrr}If $r\not \in \RR$,
$$
B^{rrrr}J_r^4+ \sum_{\substack{1\leq s \leq m \\ s\neq r}} B^{rrrs}J_r^3J_s\succeq 0.
$$
\end{lemma}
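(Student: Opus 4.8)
The plan is to reduce to a single exceptional configuration and then settle it by a double-counting comparison of intersection numbers on the ambient space.

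\emph{Reduction.} Since $r\notin\RR$, Lemma \ref{B geq}(2) gives $B^{rrrs}\ge 0$ for every $s\ne r$. If moreover $B^{rrrr}\ge 0$, then every coefficient is non-negative and each monomial $J_r^4$, $J_r^3J_s$ is a product of the nef classes $J_t$; the expression is then a non-negative combination of fakely effective cycles and we are done. So I may assume $B^{rrrr}<0$, which by Lemma \ref{B geq}(1) forces $q_{\alpha_0}^r=2$ for a unique $\alpha_0$ and $q_\alpha^r=1$ for all $\alpha\ne\alpha_0$. The Calabi--Yau condition (\ref{CY condition}) then gives $n_r=K$, and direct substitution yields $B^{rrrr}=-4$ together with $B^{rrrs}=16(n_s+1)-8q_{\alpha_0}^s\ge 8(n_s+1)$, the last inequality using $q_{\alpha_0}^s\le\sum_\alpha q_\alpha^s=n_s+1$.

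\emph{Reduction to a monomial inequality.} As observed before the lemma, fake effectivity is tested on $\PP(\bn)$ by nef classes $L=\sum_s a_sJ_s$ with $a_s\ge 0$; expanding $L^{n-4}$ into monomials with non-negative coefficients, it suffices to prove, for every monomial $M=\prod_s J_s^{b_s}$ of degree $n-4$, that
\begin{equation}
4\int_X J_r^4 M\le\sum_{s\ne r}B^{rrrs}\int_X J_r^3 J_s M. \nonumber
\end{equation}
I would convert each intersection number to the ambient space via $\int_X\gamma=\int_{\PP(\bn)}\gamma\cdot[X]$ with $[X]=\prod_{\alpha=1}^K\big(\sum_{t=1}^m q_\alpha^t H_t\big)$, so that $\int_X\prod_t J_t^{e_t}$ equals the coefficient of $\prod_t H_t^{n_t-e_t}$ in $[X]$. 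Reading this coefficient as a weighted count of maps $\phi\colon\{1,\dots,K\}\to\{1,\dots,m\}$ with prescribed fibre sizes and weight $\prod_\alpha q_\alpha^{\phi(\alpha)}$, the two sides of the inequality correspond to the same fibre-size vector, the right-hand terms being obtained by moving one factor from direction $s$ to direction $r$.

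\emph{The key step.} Because $n_r=K$, the term $J_r^4 M$ assigns exactly $K-4-b_r$ of the $K$ factors to direction $r$, so at least $4+b_r\ge 4$ factors point elsewhere; this is the room I exploit. I would run a double count over pairs $(\phi,\beta)$, where $\phi$ is a map counted by $\int_X J_r^4M$ and $\beta$ is one of its $(4+b_r)$ factors not pointing to $r$: summing the weights gives exactly $(4+b_r)\int_X J_r^4 M\ge 4\int_X J_r^4 M$. Re-indexing by the modified map $\phi'$ sending $\beta$ to $r$ (counted by $\int_X J_r^3 J_{\phi(\beta)}M$) changes the weight by the factor $q_\beta^{\phi(\beta)}/q_\beta^r\le q_\beta^{\phi(\beta)}$, since $q_\beta^r\ge 1$; and $\sum_{\beta\mapsto r}q_\beta^s\le\sum_\alpha q_\alpha^s=n_s+1$. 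This yields $4\int_X J_r^4M\le\sum_{s\ne r}(n_s+1)\int_X J_r^3J_sM$, which combined with $B^{rrrs}\ge 8(n_s+1)$ proves the monomial inequality with ample slack. (When $K\le 3$ the class $J_r^4$ already vanishes on $X$ since $J_r^{n_r+1}=0$, so the inequality is trivial.)

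The main obstacle is the bookkeeping of this double count: identifying $J_r^4 M$ and $J_r^3 J_s M$ with weighted factor-distributions that differ by a single transposition, and checking that the weight ratio is controlled by $q_\beta^{\phi(\beta)}$. Once that identification is in place, the large gap between $B^{rrrr}=-4$ and $B^{rrrs}\ge 8(n_s+1)$ makes the final estimate routine.
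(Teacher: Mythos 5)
Your argument is correct, and after the common reduction it takes a genuinely different route from the paper's. Both proofs begin the same way: by Lemma \ref{B geq} one may assume $B^{rrrr}<0$, which forces $q^r_{\alpha_0}=2$ and $q^r_\alpha=1$ for $\alpha\neq\alpha_0$, hence $B^{rrrr}=-4$, $n_r=K$, and (as you compute) $B^{rrrs}=16(n_s+1)-8q^s_{\alpha_0}\geq 8(n_s+1)$. From there the paper stays at the level of cycle classes on $\PP(\bn)$: it writes the cycle as $\bigl(-4H_r^4+\sum_s B^{rrrs}H_r^3H_s\bigr)\cdot\prod_\alpha\bigl(\sum_s q^s_\alpha H_s\bigr)$, splits off the factor indexed by $\alpha_0$, and reduces to the polynomial-positivity statement of the appendix Lemma \ref{positive polynomial} combined with $H_r^{K+1}=0$; the same lemma is then reused in Lemma \ref{Brrrs}. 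You instead test fake effectivity monomial by monomial (legitimate, since by Theorem \ref{kollar thm} the nef cone of $X$ is the simplicial cone spanned by the $J_s$), interpret each intersection number $\int_X\prod_t J_t^{e_t}$ as a weighted count of maps $\phi\colon\{1,\dots,K\}\to\{1,\dots,m\}$ with fibre sizes $n_t-e_t$ and weight $\prod_\alpha q_\alpha^{\phi(\alpha)}$, and run a double count. I checked the bookkeeping you flagged as the main obstacle, and it does close up: since $n_r=K$, every $\phi$ counted by $\int_X J_r^4M$ has exactly $4+b_r\geq 4$ indices $\beta$ with $\phi(\beta)\neq r$; the flip $\phi\mapsto\phi'$ sending $\beta$ to $r$ is injective for fixed target fibre-size vector (which determines $s$, hence reconstructs $\phi$), the weight ratio is $q_\beta^{\phi(\beta)}/q_\beta^r\leq q_\beta^{\phi(\beta)}$ because $\bq>0$, and $\sum_{\beta\in\psi^{-1}(r)}q^s_\beta\leq n_s+1$ by (\ref{CY condition}); this gives $4\int_X J_r^4M\leq\sum_{s\neq r}(n_s+1)\int_X J_r^3J_sM$, which suffices because the numbers $\int_X J_r^3J_sM$ are non-negative and $B^{rrrs}\geq 8(n_s+1)$. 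What your route buys is an elementary, self-contained argument (no appeal to Lemma \ref{positive polynomial}) with a clean quantitative intermediate inequality; what the paper's route buys is brevity and an auxiliary lemma that serves Lemma \ref{Brrrr} and Lemma \ref{Brrrs} simultaneously, whereas your counting argument would have to be redone separately for the latter.
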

\begin{proof}
By Lemma \ref{B geq}, $B^{rrrs}\geq 0$. Hence if $B^{rrrr}\geq 0$, there is nothing to prove. We may assume $B^{rrrr}< 0$. By Lemma \ref{B geq}, after reordering the index, we have $q_{1}^r=2$ and $q_\alpha^r=1$ for all $\alpha\geq 2$. In this case $B^{rrrr}=-4$ and
$$
B^{rrrs}=8q_1^s+16\sum_{\alpha=2}^K q_\alpha^s.
$$
Also we have $n_r+1=\sum_{\alpha=1}^Kq_\alpha^r=K+1$.
Viewing this cycle as a cycle on $\PP(\bn)$, we have
\begin{align*}
{}&B^{rrrr}J_r^4+ \sum_{\substack{1\leq s \leq m \\ s\neq r}} B^{rrrs}J_r^3J_s\\
={}&-4J_r^4+ \sum_{\substack{1\leq s \leq m \\ s\neq r}} \left(8q_1^s+16\sum_{\alpha=2}^K q_\alpha^s\right)J_r^3J_s\\
={}&\left(-4H_r^4+ \sum_{\substack{1\leq s \leq m \\ s\neq r}} \left(8q_1^s+16\sum_{\alpha=2}^K q_\alpha^s\right)H_r^3H_s\right)\cdot \prod_{\alpha=1}^K\left(\sum_{s=1}^m q_\alpha^sH_s\right)\\
={}&4\left(-H_r^4+ \sum_{\substack{1\leq s \leq m \\ s\neq r}} \left(2q_1^s+4\sum_{\alpha=2}^K q_\alpha^s\right)H_r^3H_s\right)\cdot \left(2H_r+\sum_{s\neq r} q_1^sH_s\right)\cdot \prod_{\alpha=2}^K\left(H_r+\sum_{s\ne r} q_\alpha^sH_s\right)\\
={}&8H_r^3\left(-H_r+ \sum_{\substack{1\leq s \leq m \\ s\neq r}} \left(2q_1^s+4\sum_{\alpha=2}^K q_\alpha^s\right)H_s\right)\cdot \left(H_r+\sum_{s\neq r} \frac{q_1^s}{2}H_s\right)\cdot \prod_{\alpha=2}^K\left(H_r+\sum_{s\ne r} q_\alpha^sH_s\right)\\
\succeq {}&8H_r^3\left(-H_r+ \sum_{\substack{1\leq s \leq m \\ s\neq r}} \left(\frac{1}{2}q_1^s+\sum_{\alpha=2}^K q_\alpha^s\right)H_s\right)\cdot \left(H_r+\sum_{s\neq r} \frac{q_1^s}{2}H_s\right)\cdot \prod_{\alpha=2}^K\left(H_r+\sum_{s\ne r} q_\alpha^sH_s\right).
\end{align*}
Note that by Lemma \ref{positive polynomial},
$$
\left(-H_r+ \sum_{\substack{1\leq s \leq m \\ s\neq r}} \left(\frac{1}{2}q_1^s+\sum_{\alpha=2}^K q_\alpha^s\right)H_s\right)\cdot \left(H_r+\sum_{s\neq r} \frac{q_1^s}{2}H_s\right)\cdot \prod_{\alpha=2}^K\left(H_r+\sum_{s\ne r} q_\alpha^sH_s\right)+H_r^{K+1}
$$
is a polynomial in terms of $H_1,\ldots, H_m$ with non-negative coefficients and note that $$H_r^{K+1}=0$$ since $K=n_r$ and $H_r$ is the pullback of hyperplane on $\bP^{n_r}$. Hence we have written $$B^{rrrr}J_r^4+ \sum_{\substack{1\leq s \leq m \\ s\neq r}} B^{rrrs}J_r^3J_s$$ as a polynomial in terms of $H_1,\ldots, H_m$ with non-negative coefficients, which is clearly quasi-effective.
\end{proof}
\begin{lemma}\label{Brrrs}
If $r\in \RR$,
$$
\sum_{\substack{1\leq s\leq m\\s\neq r}} B^{rrrs}J_r^3J_s+ \sum_{\substack{s\not \in \RR\\ s\neq r}} B^{rrss}J_r^2J_s^2+\sum_{\substack{s\in \RR\\ s\neq r}} \frac{1}{2}B^{rrss}J_r^2J_s^2+\sum_{\substack{1\leq s<t\leq m\\ s\neq r, t\neq r}} B^{rrst}J_r^2J_sJ_t\succeq 0.
$$
Here for convenience, we set $B^{rrss}=B^{ssrr}$ if $r>s$.
\end{lemma}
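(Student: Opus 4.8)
The plan is to mirror the proof of Lemma \ref{Brrrr}: view the whole left-hand side as a cycle on $\PP(\bn)$, factor out $H_r^2[X]$ where $[X]=\prod_{\alpha=1}^K\big(\sum_{s}q_\alpha^s H_s\big)$, and exploit the vanishing coming from the ambient projective space. Since $r\in\RR$ means $q_\alpha^r=1$ for all $\alpha$, the Calabi--Yau relation gives $n_r+1=\sum_\alpha q_\alpha^r=K$, so $H_r^{K}=0$; writing $L_\alpha:=\sum_{s\neq r}q_\alpha^s H_s$ we get $[X]=\prod_{\alpha=1}^K(H_r+L_\alpha)=\sum_{k=0}^K H_r^{K-k}\sigma_k$, where $\sigma_k$ denotes the $k$-th elementary symmetric polynomial in $L_1,\dots,L_K$. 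Each $\sigma_k$ is a polynomial in the nef classes $H_s$ with non-negative coefficients, hence fakely effective; I will repeatedly use that coefficientwise non-negativity of a polynomial in the $H_s$ implies $\succeq 0$.

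First I would factor $H_r^2[X]$ out of the left-hand side and simplify the remaining quadratic form in the $H_s$. Using $r\in\RR$ and $\sum_\alpha q_\alpha^s=n_s+1$, the coefficients become $B^{rrrs}=-4(n_s+1)$, while $B^{rrss}$ and $B^{rrst}$ are the non-negative quantities from Lemma \ref{B geq}. A short bookkeeping identifies the linear-in-$H_r$ part as $-4\sigma_1 H_r$ (note $\sigma_1=\sum_\alpha L_\alpha=\sum_{s\neq r}(n_s+1)H_s$), so the expression is $H_r^2(-4\sigma_1 H_r+\QQ)[X]$, where $\QQ$ is the quadratic part carrying the coefficients $B^{rrss}$ (with weight $\tfrac12$ when $s\in\RR$) and $B^{rrst}$.

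The crux is the single coefficientwise domination
$$\QQ\succeq 2\sigma_1^2,$$
which I would check term by term: on $H_s^2$ with $s\notin\RR$ it reads $8(n_s+1)^2\geq 6\sum_\alpha(q_\alpha^s)^2$; on $H_s^2$ with $s\in\RR$ it reads $3K(K-1)\geq 0$, which is precisely where the weight $\tfrac12$ is absorbed; and on $H_sH_t$ it reads $16(n_s+1)(n_t+1)\geq 12\sum_\alpha q_\alpha^s q_\alpha^t$. All three follow from $\sum_\alpha q_\alpha^s q_\alpha^t\leq(n_s+1)(n_t+1)$. Then, expanding $H_r^2(-4\sigma_1 H_r+\QQ)\sum_k H_r^{K-k}\sigma_k$ and collecting powers of $H_r$, the coefficient of $H_r^p$ equals $\QQ\,\sigma_{K-p+2}-4\sigma_1\sigma_{K-p+3}$. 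The negative summands of high $H_r$-degree are annihilated by $H_r^{K}=0$, leaving only $2\leq p\leq K-1$, i.e. indices $k:=K-p+2\geq 3$.

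For each surviving power I would invoke the elementary-symmetric identity $\sigma_1\sigma_k=(k+1)\sigma_{k+1}+(\text{monomials with a repeated }L_\alpha)$, whence $\sigma_1\sigma_k\succeq(k+1)\sigma_{k+1}$ since the $L_\alpha$ have non-negative coefficients. Combining with $\QQ\succeq 2\sigma_1^2$ yields
$$\QQ\,\sigma_k\succeq 2\sigma_1^2\sigma_k\succeq 2(k+1)\sigma_1\sigma_{k+1}\succeq 4\sigma_1\sigma_{k+1},$$
so every coefficient of $H_r^p$ is a non-negative combination of monomials in the nef classes $H_s$, and the whole cycle is fakely effective. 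The main obstacle is exactly this final interplay: one must confirm that the slack in $\QQ\succeq 2\sigma_1^2$ survives the loss of the repeated-index monomials in $\sigma_1\sigma_k$, and that the dangerous low-index negative terms are precisely those killed by $H_r^{K}=0$; this is what dictates both the $\tfrac12$-weighting on $s\in\RR$ and the effective range $k\geq 3$. Alternatively, the last two steps can be packaged through Lemma \ref{positive polynomial}, exactly as in Lemma \ref{Brrrr}.
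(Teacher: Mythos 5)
Your proof is correct, and it shares the paper's overall skeleton: push the cycle forward to $\PP(\bn)$, where it becomes $H_r^2\left(-4\sigma_1H_r+\QQ\right)\prod_{\alpha}(H_r+L_\alpha)$, dominate the quadratic part $\QQ$ coefficientwise by a multiple of $\sigma_1^2$, and absorb the negative term $-4\sigma_1H_r^3$ using the vanishing $H_r^{K}=0$ together with positivity of elementary symmetric expressions in the $L_\alpha$. But the execution genuinely differs. The paper proves the stronger domination $\QQ\succeq 4\sigma_1^2$, which on the diagonal terms with $s\in\RR$ reads $\frac{1}{2}B^{rrss}=5K^2-3K\geq 4K^2$ and therefore forces the hypothesis $K\geq 3$ (with $K\leq 2$ treated as a trivial separate case); it then factors its lower bound as $4H_r^2\sigma_1(\sigma_1-H_r)\prod_\alpha(H_r+L_\alpha)$ and quotes Lemma \ref{positive polynomial}, whose content is the multiplicity-one inequality $\sigma_1\sigma_k\succeq\sigma_{k+1}$. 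You instead prove only $\QQ\succeq 2\sigma_1^2$, which holds uniformly in $K$ (the $s\in\RR$ case being exactly $3K(K-1)\geq 0$, where the $\frac{1}{2}$-weight is absorbed), and you compensate with the sharper symmetric-function inequality $\sigma_1\sigma_k\succeq(k+1)\sigma_{k+1}$: for the indices $k\geq 3$ that survive the relation $H_r^K=0$ this gives $\QQ\,\sigma_k\succeq 2\sigma_1^2\sigma_k\succeq 2(k+1)\sigma_1\sigma_{k+1}\succeq 4\sigma_1\sigma_{k+1}$. Your route buys uniformity (no case split on $K$) and makes explicit which dangerous low-index terms are killed by $H_r^K=0$; the paper's route buys a cleaner one-line factorization and verbatim reuse of Lemma \ref{positive polynomial}.

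One small caveat: your closing remark that the last two steps ``can be packaged through Lemma \ref{positive polynomial}, exactly as in Lemma \ref{Brrrr}'' is not accurate as stated. With your constant $2$ the residual cycle is $2H_r^2\sigma_1(\sigma_1-2H_r)\prod_\alpha(H_r+L_\alpha)$, and Lemma \ref{positive polynomial} only controls $(\sigma_1-H_r)\prod_\alpha(H_r+L_\alpha)$; to invoke that lemma one needs the domination with constant $4$, which is precisely why the paper needs $K\geq 3$ at that point. This does not affect your main argument, which is complete as written.
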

\begin{proof}In this case, $n_r+1=\sum_{\alpha=1}^Kq_\alpha^r=K$. Hence $J_r^K=(H_r|_X)^{n_r+1}=0$, since $H_r$ is the pullback of hyperplane on $\bP^{n_r}$. We may assume $K\geq 3$ otherwise there is nothing to prove. We have
\begin{align*}
B^{rrrs}={}&- 4\sum_{\alpha=1}^K q_\alpha^s;\\
B^{rrss}={}& 10\left( \sum_{\alpha=1}^K q_\alpha^s \right)^2- 6\sum_{\alpha=1}^K (q_\alpha^s)^2\geq 4\left( \sum_{\alpha=1}^K q_\alpha^s \right)^2;\\
B^{rrst}={}& 20\left( \sum_{\alpha=1}^Kq_\alpha^s \right)\left( \sum_{\alpha=1}^K q_\alpha^t \right)-12\sum_{\alpha=1}^K q_\alpha^s q_\alpha^t\geq 8\left( \sum_{\alpha=1}^Kq_\alpha^s \right)\left( \sum_{\alpha=1}^K q_\alpha^t \right).
\end{align*}
Moreover, since $K\geq 3$, if $s\in \RR$ and $s\ne r$, then
$$
B^{rrss}=10\left( \sum_{\alpha=1}^K q_\alpha^s \right)^2- 6\sum_{\alpha=1}^K (q_\alpha^s)^2=10K^2-6K\geq 8K^2\geq 8\left( \sum_{\alpha=1}^K q_\alpha^s \right)^2.
$$
Hence we have
\begin{align*}
{}&\sum_{\substack{1\leq s\leq m\\s\neq r}} B^{rrrs}J_r^3J_s+ \sum_{\substack{s\not \in \RR\\ s\neq r}} B^{rrss}J_r^2J_s^2+\sum_{\substack{s\in \RR\\ s\neq r}} \frac{1}{2}B^{rrss}J_r^2J_s^2+\sum_{\substack{1\leq s<t\leq m\\ s\neq r, t\neq r}} B^{rrst}J_r^2J_sJ_t\\
\succeq {}&-\sum_{\substack{1\leq s\leq m\\s\neq r}} 4\sum_{\alpha=1}^K q_\alpha^s J_r^3J_s+ \sum_{\substack{ s\neq r}} 4\left( \sum_{\alpha=1}^K q_\alpha^s \right)^2J_r^2J_s^2+\sum_{\substack{1\leq s<t\leq m\\ s\neq r, t\neq r}} 8\left( \sum_{\alpha=1}^Kq_\alpha^s \right)\left( \sum_{\alpha=1}^K q_\alpha^t \right)J_r^2J_sJ_t\\
={}&-4\sum_{\substack{1\leq s\leq m\\s\neq r}} \sum_{\alpha=1}^K q_\alpha^s J_r^3J_s+ 4\left(\sum_{\substack{1\leq s\leq m\\s\neq r}} \sum_{\alpha=1}^K q_\alpha^sJ_rJ_s\right)^2\\
={}&4J_r\left(\sum_{\substack{1\leq s\leq m\\s\neq r}} \sum_{\alpha=1}^K q_\alpha^sJ_rJ_s\right)\left(-J_r+\sum_{\substack{1\leq s\leq m\\s\neq r}} \sum_{\alpha=1}^K q_\alpha^sJ_s\right).
\end{align*}
Viewing as a cycle on $\PP(\bn)$ and use the fact that $H_r^{K+1}=0$,
\begin{align*}
{}& -J_r+\sum_{\substack{1\leq s\leq m\\s\neq r}} \sum_{\alpha=1}^K q_\alpha^sJ_s\\
={}&\left(-H_r+\sum_{\substack{1\leq s\leq m\\s\neq r}} \sum_{\alpha=1}^K q_\alpha^sH_s\right)\cdot \prod_{\alpha=1}^K\left(\sum_{s=1}^m q_\alpha^sH_s\right)\\
={}&\left(-H_r+\sum_{\substack{1\leq s\leq m\\s\neq r}} \sum_{\alpha=1}^K q_\alpha^sH_s\right)\cdot \prod_{\alpha=1}^K\left(H_r+\sum_{s\ne r} q_\alpha^sH_s\right)\\
={}&\left(-H_r+\sum_{\substack{1\leq s\leq m\\s\neq r}} \sum_{\alpha=1}^K q_\alpha^sH_s\right)\cdot \prod_{\alpha=1}^K\left(H_r+\sum_{s\ne r} q_\alpha^sH_s\right)+H_r^{K+1}
\end{align*}
can be expressed as a polynomial in terms of $H_1,\ldots, H_m$ with non-negative coefficients by Lemma \ref{positive polynomial}. Hence we may
express
$$
\sum_{\substack{1\leq s\leq m\\s\neq r}} B^{rrrs}J_r^3J_s+ \sum_{\substack{s\not \in \RR\\ s\neq r}} B^{rrss}J_r^2J_s^2+\sum_{\substack{s\in \RR\\ s\neq r}} \frac{1}{2}B^{rrss}J_r^2J_s^2+\sum_{\substack{1\leq s<t\leq m\\ s\neq r, t\neq r}} B^{rrst}J_r^2J_sJ_t
$$
as a polynomial in terms of $H_1,\ldots, H_m$ with non-negative coefficients, which is clearly quasi-effective.
\end{proof}

\begin{proof}[Proof of Theorem \ref{CICY td4}]By (\ref{BBB}), Lemmas \ref{B geq}, \ref{Brrrr}, and \ref{Brrrs},
\begin{align*}
{}&2880\td_4(X)\\={}&\sum_{r=1}^m B^{rrrr}J_r^4+ \sum_{\substack{1\leq r, s\leq m\\ r\neq s}} B^{rrrs}J_r^3J_s+ \sum_{1\leq r<s\leq m} B^{rrss}J_r^2J_s^2\\
{}&+\sum_{\substack{1\leq r,s,t\leq m\\ r\neq s, r\neq t, s<t}} B^{rrst}J_r^2J_sJ_t+\sum_{1\leq r<s<t<u\leq m} B^{rstu}J_r J_sJ_t J_u\\
\succeq {}&\sum_{r\not \in \RR} B^{rrrr}J_r^4+ \sum_{\substack{1\leq r, s\leq m\\ r\neq s}} B^{rrrs}J_r^3J_s+ \sum_{1\leq r<s\leq m} B^{rrss}J_r^2J_s^2+\sum_{\substack{r \in \RR\\ 1\leq s,t\leq m\\ r\neq s, r\neq t, s<t}} B^{rrst}J_r^2J_sJ_t\\
= {}&\sum_{r\not \in \RR} B^{rrrr}J_r^4+ \sum_{r\not \in \RR}\sum_{\substack{1\leq s \leq m \\ s\neq r}} B^{rrrs}J_r^3J_s\\{}&+\sum_{r \in \RR}\sum_{\substack{1\leq s \leq m \\ s\neq r}} B^{rrrs}J_r^3J_s+ \sum_{1\leq r<s \leq m} B^{rrss}J_r^2J_s^2+\sum_{\substack{r \in \RR\\ 1\leq s,t\leq m\\ r\neq s, r\neq t, s<t}} B^{rrst}J_r^2J_sJ_t\\
\succeq {}&\sum_{r\not \in \RR} \left[B^{rrrr}J_r^4+ \sum_{ s\neq r} B^{rrrs}J_r^3J_s\right]\\{}&+\sum_{r \in \RR}\left[\sum_{s\neq r} B^{rrrs}J_r^3J_s+ \sum_{\substack{s\not \in \RR\\ s\neq r}} B^{rrss}J_r^2J_s^2+\sum_{\substack{s\in \RR\\ s\neq r}} \frac{1}{2}B^{rrss}J_r^2J_s^2+\sum_{\substack{1\leq s<t\leq m\\ s\neq r, t\neq r}} B^{rrst}J_r^2J_sJ_t\right]\\
\succeq{}& 0.
\end{align*}
Here for the first and second inequalities we use Lemma \ref{B geq}, and for the last inequality we use Lemmas \ref{Brrrr} and \ref{Brrrs}.
\end{proof}
\section{Some effective results}
In this section, using Hirzebruch--Riemann--Roch formula and Miyaoka--Yau inequality \cite{miyaoka, yau0}, we prove a weaker version of Conjecture \ref{kawamata conj} in all dimensions (which is related to a conjecture of Beltrametti and Sommese, see for instance H\"{o}ring's work \cite{horing}).

We first consider odd dimensions.
\begin{theorem}\label{HRR for odd CY}
Let $X$ be a smooth projective variety of dimension $2k+1$ ($k\geq1$) with $c_1(X)=0$ in $H^2(X, \bR)$ and $L$ a nef and big line bundle on $X$. Then there exists
$i\in\{1,2,\ldots, k\}$ such that $H^{0}(X,L^{\otimes i})\neq0$.
\end{theorem}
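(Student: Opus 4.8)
The plan is to analyze the Hilbert polynomial $P(j):=\chi(X,L^{\otimes j})$ as a function of $j$, and to play off a parity symmetry forced by $c_1(X)=0$ against the Miyaoka--Yau fake effectivity of $c_2(X)$. First I would observe that since $K_X$ is numerically trivial, $L^{\otimes j}-K_X$ is nef and big for every $j\geq 1$, so Kawamata--Viehweg vanishing gives $h^0(X,L^{\otimes j})=\chi(X,L^{\otimes j})=P(j)$ for all $j\geq 1$. Thus each $P(i)$ with $i\geq 1$ is a non-negative integer, and it suffices to exhibit some $i\in\{1,\dots,k\}$ with $P(i)>0$.

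Next I would show that $P$ is an \emph{odd} polynomial of degree $n=2k+1$. By Hirzebruch--Riemann--Roch, $P(j)=\int_X e^{j\,c_1(L)}\td(X)=\sum_{p=0}^{n}\frac{j^p}{p!}\int_X c_1(L)^p\,\td_{n-p}(X)$, a polynomial in $j$ whose leading coefficient $\frac{1}{n!}\int_X c_1(L)^n$ is strictly positive because $L$ is big. The crucial point is that $c_1(X)=0$ forces all odd Todd classes to vanish: since $c_1(X)$ is torsion, $\td(X)=e^{c_1(X)/2}\hat{A}(X)=\hat{A}(X)$ in $H^\ast(X,\bQ)$, and $\hat{A}(X)$ is a polynomial in the Pontrjagin classes, hence supported in cohomological degrees divisible by $4$; therefore $\td_m(X)=0$ for all odd $m$. (Equivalently, this parity is forced by Serre duality together with numerical triviality of $K_X$, which gives $P(j)=(-1)^n P(-j)=-P(-j)$.) Because $n$ is odd, only the terms with $p$ odd survive in the HRR expansion, so $P(-j)=-P(j)$.

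The main step is then a root count. Suppose toward a contradiction that $P(i)=0$ for every $i\in\{1,\dots,k\}$. By oddness $P(0)=0$ and $P(-i)=0$ as well, so $P$ vanishes at the $2k+1$ distinct points $0,\pm 1,\dots,\pm k$; as $\deg P=2k+1$ this forces $P(j)=c\,j\prod_{i=1}^{k}(j^2-i^2)$, where $c=\frac{1}{n!}\int_X c_1(L)^n>0$ is the leading coefficient. Expanding, the coefficient of $j^{n-2}=j^{2k-1}$ equals $-c\sum_{i=1}^{k}i^2<0$. On the other hand, reading off the same coefficient from the HRR expansion gives $\frac{1}{(2k-1)!}\int_X c_1(L)^{n-2}\,\td_2(X)=\frac{1}{12\,(2k-1)!}\int_X c_1(L)^{n-2}\,c_2(X)\geq 0$, since $c_1(L)$ is nef and $\td_2(X)=\tfrac{1}{12}c_2(X)$ is fakely effective by Miyaoka--Yau. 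This contradiction shows that $P(i)>0$ for some $i\in\{1,\dots,k\}$, which gives $H^0(X,L^{\otimes i})\neq 0$.

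I expect the only genuinely delicate ingredient to be the parity statement, i.e. the vanishing of odd Todd classes; everything else is either one of the cited inputs (Kawamata--Viehweg vanishing and Miyaoka--Yau positivity) or elementary polynomial bookkeeping. In writing it up I would be careful to confirm that $c_1(X)$ being $\bR$-trivial, hence $\bQ$-trivial, really kills the $e^{c_1(X)/2}$ factor in rational cohomology, and that the coefficient matched against Miyaoka--Yau is precisely the $j^{n-2}$ coefficient, which is the one attached to $\td_2$.
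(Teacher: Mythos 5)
Your proposal is correct and follows essentially the same route as the paper's own proof: Kawamata--Viehweg vanishing to convert sections into Euler characteristics, oddness of the Hilbert polynomial from $\td_{\mathrm{odd}}(X)=0$, the root count at $0,\pm1,\dots,\pm k$ forcing the factorization $\alpha\,t\prod_{i=1}^k(t^2-i^2)$, and the contradiction between the negative $t^{2k-1}$ coefficient and the Miyaoka--Yau non-negativity of $\int_X c_1(L)^{2k-1}c_2(X)$. The only difference is that you supply a justification (via $\td(X)=e^{c_1/2}\hat{A}(X)$ or Serre duality) for the vanishing of the odd Todd classes, which the paper asserts without comment.
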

\begin{proof}
By contradiction, we assume $h^{0}(X,L^{\otimes i})=0$ for $i=1,2,\ldots,k$,
then $\chi(X,L^{\otimes i})=0$ by Kawamata--Viehweg vanishing theorem.
Hirzebruch--Riemann--Roch formula gives
\begin{equation}f(t)\triangleq \chi(X,L^{\otimes t})=\int_{X}\frac{L^{2k+1}}{(2k+1)!}t^{2k+1}+\int_{X}\frac{L^{2k-1}\cdot \td_{2}(X)}{(2k-1)!}t^{2k-1}
+\cdots+\int_{X}(L\cdot \td_{2k}(X))t. \nonumber \end{equation}
as $\td_{\text{odd}}(X)=0$. Then $f(-t)=-f(t)$ and degree $(2k+1)$-polynomial $f(t)$ has roots $\{0,\pm1,\pm2,\ldots,\pm k\}$.
Then we can write
\begin{equation}f(t)=\alpha t(t^{2}-1)(t^{2}-2^{2})\cdots(t^{2}-k^{2}), \nonumber \end{equation}
where $\alpha=\int_{X}\frac{L^{2k+1}}{(2k+1)!}>0$. The coefficient of $t^{2k-1}$ is $-\alpha\cdot(\sum_{i=1}^{k}i^{2})=\int_{X}\frac{L^{2k-1}\cdot c_{2}(X)}{12(2k-1)!}$, where we get a contradiction as the RHS is non-negative by the Miyaoka--Yau inequality \cite{miyaoka, yau0}.
\end{proof}
Then we consider even dimensions.
\begin{theorem}\label{HRR for even CY}
Let $X$ be a smooth projective variety of dimension $4k+2$ or $4k+4$ ($k\geq0$) with $c_1(X)=0$ in $H^2(X, \bR)$ and $L$ a nef and big line bundle on $X$. Then there exists
$i\in\{1,2,\ldots,2k+1\}$ such that $H^{0}(X,L^{i})\neq0$.
\end{theorem}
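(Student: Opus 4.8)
The plan is to follow the strategy of Theorem~\ref{HRR for odd CY}, analyzing the Hirzebruch--Riemann--Roch polynomial $f(t)\triangleq\chi(X,L^{\otimes t})$, which now has even degree $n\in\{4k+2,4k+4\}$. Since $c_1(X)=0$ forces $\td_{\mathrm{odd}}(X)=0$, the polynomial $f$ is even, $f(-t)=f(t)$, with leading coefficient $\alpha=\frac{1}{n!}\int_X L^n>0$ because $L$ is nef and big. Arguing by contradiction, I would assume $h^0(X,L^{\otimes i})=0$ for every $i\in\{1,\dots,2k+1\}$. As $c_1(X)=0$ makes $K_X$ numerically trivial, each $L^{\otimes i}-K_X\equiv L^{\otimes i}$ is nef and big, so Kawamata--Viehweg vanishing gives $\chi(X,L^{\otimes i})=0$; by evenness this produces the $4k+2$ roots $\pm1,\dots,\pm(2k+1)$, whence $\prod_{i=1}^{2k+1}(t^2-i^2)$ divides $f$.

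First I would dispose of $n=4k+2$, where these roots exhaust the degree, so $f(t)=\alpha\prod_{i=1}^{2k+1}(t^2-i^2)$. The coefficient of $t^{n-2}$ equals $\frac{1}{(n-2)!}\int_X L^{n-2}\td_2(X)=\frac{1}{12(n-2)!}\int_X L^{n-2}c_2(X)\ge0$ by the Miyaoka--Yau inequality \cite{miyaoka, yau0}, yet in the factored form it equals $-\alpha\sum_{i=1}^{2k+1}i^2<0$, a contradiction exactly as in the odd case.

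The case $n=4k+4$ is the main obstacle: the forced roots now leave a residual even quadratic factor, so $f(t)=\alpha(t^2-\gamma)\prod_{i=1}^{2k+1}(t^2-i^2)$ for some $\gamma\in\bR$, and comparing the coefficient of $t^{n-2}$ with Miyaoka--Yau only yields $\gamma\le-\sum_{i=1}^{2k+1}i^2<0$. To close the argument I would extract an opposing inequality from the constant term $f(0)=\chi(X,\OO_X)=\alpha\gamma\big((2k+1)!\big)^2$. The essential extra input is that $\chi(X,\OO_X)\ge0$ whenever $c_1(X)=0$: by the Beauville--Bogomolov decomposition (Theorem~\ref{classification of CY}) there is a finite \'etale cover $\pi\colon X'\to X$ that is a product of abelian, Calabi--Yau, and hyperk\"{a}hler factors, whose holomorphic Euler characteristics, for a factor of dimension $d$, are respectively $0$, $1+(-1)^{d}$, and $\frac{d}{2}+1$, all non-negative; hence $\chi(\OO_{X'})\ge0$ by the K\"{u}nneth formula, and since $\pi$ is \'etale one has $\td(T_{X'})=\pi^*\td(T_X)$, giving $\chi(\OO_X)=\chi(\OO_{X'})/\deg\pi\ge0$. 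This forces $\gamma\ge0$, contradicting $\gamma<0$ and finishing the proof. The dichotomy between the two residue classes is precisely whether the forced roots fill the degree or leave one spare even factor, and the non-negativity of $\chi(\OO_X)$ is exactly what neutralizes that extra degree of freedom.
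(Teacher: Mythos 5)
Your proposal is correct and follows essentially the same route as the paper: assume vanishing, use evenness of $f(t)=\chi(X,L^{\otimes t})$ to factor out $\prod_{i=1}^{2k+1}(t^2-i^2)$, compare the $t^{n-2}$ coefficient with Miyaoka--Yau, and in the $4k+4$ case play the resulting negativity of the residual root against $\chi(X,\OO_X)\geq 0$ from the constant term. The only difference is cosmetic: where the paper simply cites Theorem~\ref{classification of CY} for $\chi(X,\OO_X)\geq0$, you spell out the factor-by-factor computation ($0$, $1+(-1)^d$, $\frac{d}{2}+1$) together with K\"unneth and the \'etale multiplicativity of $\chi$, which is a correct expansion of that citation.
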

\begin{proof}
If $\dim X=4k+2$, assume $h^{0}(X,L^{\otimes i})=0$ for $i=1,2,\ldots,2k+1$, then $\chi(X,L^{\otimes i})=0$.
Hirzebruch--Riemann--Roch formula gives
\begin{equation} f(t)\triangleq \chi(X,L^{\otimes t})=\int_{X}\frac{L^{4k+2}}{(4k+2)!}t^{4k+2}+\int_{X}\frac{L^{4k}\cdot \td_{2}(X)}{(4k)!}t^{4k}
+\cdots+\chi(X,\mathcal{O}_{X}). \nonumber \end{equation}
as $\td_{\text{odd}}(X)=0$. Then $f(-t)=f(t)$ and degree $(4k+2)$-polynomial $f(t)$ has roots $\{\pm1,\pm2,\ldots,\pm (2k+1)\}$.
Then we can write
\begin{equation}f(t)=\alpha (t^{2}-1)(t^{2}-2^{2})\cdots(t^{2}-(2k+1)^{2}), \nonumber \end{equation}
where $\alpha=\int_{X}\frac{L^{4k+2}}{(4k+2)!}>0$. The coefficient of $t^{4k}$ is $-\alpha\cdot(\sum_{i=1}^{2k+1}i^{2})=\int_{X}\frac{L^{4k}\cdot c_{2}(X)}{12(4k)!}$, where we get a contradiction as the RHS is non-negative by the Miyaoka--Yau inequality \cite{miyaoka, yau0}.

If $\dim X=4k+4$, we similarly assume $f(t)=\chi(X,L^{\otimes t})$ has roots $\{\pm1,\pm2,\ldots,\pm (2k+1)\}$, and then
\begin{equation}f(t)=\alpha (t^{2}-1)(t^{2}-2^{2})\cdots(t^{2}-(2k+1)^{2})(t^{2}-\beta) \nonumber \end{equation}
for some $\beta\in\mathbb{C}$ and $\alpha=\int_{X}\frac{L^{4k+4}}{(4k+4)!}>0$. The coefficient of $t^{4k+2}$ is
$-\alpha\cdot(\beta+\sum_{i=1}^{2k+1}i^{2})=\int_{X}\frac{L^{4k+2}\cdot c_{2}(X)}{12(4k+2)!}$, and the constant term is
$\alpha\cdot\beta\cdot((2k+1)!)^{2}=\chi(X,\mathcal{O}_{X})$. Miyaoka--Yau inequality gives $\beta<0$,
which contradicts to $\chi(X,\mathcal{O}_{X})\geq0$ by Theorem \ref{classification of CY}.
\end{proof}
\vspace{1pt}
\begin{remark}  ~\\
1. As a corollary, Conjecture \ref{kawamata conj} holds true in dimension $n\leq4$. \\
2. For a hyperk\"{a}hler variety $X$ of dimension $2n$ ($n\geq 2$) and $L$ a nef and big line bundle on $X$, we can enhance the above result by using the effectiveness of fourth Todd class (Theorem \ref{effective of Td4}), and show that there exists a positive integer $i\leq \rounddown{\frac{n-2}{2}}+\rounddown{\frac{n}{2}}$ such that $H^{0}(X,L^{\otimes i})\ne 0$. We leave the detail to the readers.
\end{remark}

\section{Appendix}
In the appendix, we prove some basic lemmas.
\begin{lemma}\label{lemma on integers}
Let $n\in\mathbb{Z}_{>0}$ be a positive integer and $\lambda\in\mathbb{Q}$ be a rational number. \\
(1) If $n\geq2$ and $(n+1)\cdot\binom{\lambda+n}{n }$ is an integer, then $\lambda\in\mathbb{Z}$. \\
(2) If $\binom{\lambda+n+1 }{n }$ is an integer, then $\lambda\in\mathbb{Z}$.
\end{lemma}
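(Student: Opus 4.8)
The plan is to argue by elementary divisibility. Write $\lambda = p/q$ in lowest terms with $q \in \mathbb{Z}_{>0}$ and $\gcd(p,q) = 1$, and show in both parts that $q = 1$. The observation used throughout is that for every integer $j$ the numerator of $\lambda + j = (p+jq)/q$ is coprime to $q$, since $\gcd(p+jq, q) = \gcd(p,q) = 1$; consequently any finite product of factors of the form $p + jq$ is again coprime to $q$.

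For part (2), I would expand
$$\binom{\lambda+n+1}{n} = \frac{(\lambda+2)(\lambda+3)\cdots(\lambda+n+1)}{n!} = \frac{\prod_{j=2}^{n+1}(p+jq)}{q^n\, n!}.$$
If this equals an integer $M$, then $\prod_{j=2}^{n+1}(p+jq) = M\, n!\, q^n$, so in particular $q$ divides the left-hand side. But that product is coprime to $q$ by the observation above, so $q = 1$ and hence $\lambda = p \in \mathbb{Z}$. Note this argument requires no hypothesis on $n$.

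For part (1), the same substitution gives
$$(n+1)\binom{\lambda+n}{n} = \frac{(n+1)\prod_{j=1}^{n}(p+jq)}{q^n\, n!};$$
equating to an integer yields $q^n \mid (n+1)\prod_{j=1}^{n}(p+jq)$. Since the product is coprime to $q$, this sharpens to the clean divisibility $q^n \mid (n+1)$. The main point is then to convert this into $q = 1$: if $q \geq 2$ we have $q^n \geq 2^n$, and the elementary inequality $2^n > n+1$ for $n \geq 2$ contradicts $q^n \mid (n+1)$, which would force $q^n \leq n+1$. Hence $q = 1$ and $\lambda \in \mathbb{Z}$. I expect this inequality to be the only genuine subtlety, and it is precisely where the hypothesis $n \geq 2$ enters: for $n = 1$ one has $2^1 = n+1$, and indeed $\lambda = 1/2$ gives $(n+1)\binom{\lambda+1}{1} = 3 \in \mathbb{Z}$ without $\lambda$ being an integer, so the bound on $n$ cannot be dropped.
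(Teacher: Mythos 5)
Your proof is correct, and for part (1) it takes a genuinely different and cleaner route than the paper's. Both arguments start from the same divisibility $n!\,q^{n}\mid (n+1)(p+q)(p+2q)\cdots(p+nq)$, but the paper exploits it only through reduction modulo $q$ and modulo $q^{2}$ (expanding the product binomially), which extracts at most two factors of $q$ at a time; to compensate, it first proves a dichotomy --- either $n+1$ is prime or $(n+1)\mid 2\cdot n!$ --- and then derives a separate ad hoc contradiction in each case (in the prime case via $q=n+1$ and the mod-$q^{2}$ relation, in the other case by reducing to $q=2$ and a parity argument ending with the absurdity $2^{n}\mid n+1$). You instead observe that the \emph{entire} product $\prod_{j=1}^{n}(p+jq)$ is coprime to $q$, so Euclid's lemma cancels it wholesale from $q^{n}\mid (n+1)\prod_{j=1}^{n}(p+jq)$ and yields the full-strength conclusion $q^{n}\mid n+1$ in one step; the hypothesis $n\geq 2$ then enters exactly once, through the elementary inequality $2^{n}>n+1$. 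This one-stroke cancellation is precisely what the paper's modular expansions fail to capture, and it eliminates the entire case analysis. Your part (2) is essentially the paper's own argument (both amount to noting the product is coprime to $q$ while $q^{n}$ divides it), and your remark that $n=1$, $\lambda=1/2$ shows the hypothesis $n\geq 2$ in (1) is sharp is a nice addition not made explicit in the paper.
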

\begin{proof}
(1) Write $\lambda=p/q$ for $p\in \mathbb{Z}$ and $q\in\mathbb{Z}_{>0}$ with $\text{gcd}(p,q)=1$. By 
contradiction, we may assume $q>1$.
Then $(n+1)\cdot\binom{\lambda+n}{n }\in \ZZ$ implies that
\begin{align}
n!q^n\mid (n+1)(p+nq)\cdots(p+q).\label{n1}
\end{align}

We claim that either $(n+1)$ is prime or $(n+1)\mid2\cdot n!$. If $n\leq 4$, it is obvious. Assume that $n\geq5$.
If $(n+1)$ is not prime, we have a factorization $(n+1)=a\cdot b$, for integers $2\leq a,b\leq n$. If $a\neq b$, they both appear in $n!$ as factors, and hence $(n+1)\mid n!$.
If $a=b$, then $a=\sqrt{n+1}\leq \frac{n}{2}$, so $2a\leq n$. Hence $a$ and $2a$ appear in $n!$ as factors, and hence $a^2\mid n!$.

For the case when $(n+1)$ is prime, (\ref{n1}) implies that
$$q^{n}\mid (n+1)(p+nq)\cdots(p+q), $$
which implies that $q\mid (n+1)p^{n}$ and $q^{2}\mid (n+1)(p^{n}+\frac{n(n+1)}{2}p^{n-1}q)$. As $(p,q)=1$ and
$(n+1)$ is prime, we conclude from the first dividing relation that $q=n+1$. Combining with the second relation, we get $q^{2}\mid q p^{n}$,
which contradicts with $(p,q)=1$.

For the case when $(n+1)\mid 2\cdot n!$, (\ref{n1}) implies that
$$q^{n}\mid 2(p+nq)\cdots(p+q).$$
which implies that $q\mid 2p^{n}$. As $(p,q)=1$, we conclude that $q=2$ and $p$ is odd. Hence $(p+nq)\cdots(p+q)$ is odd and (\ref{n1}) implies that $2^{n}\mid n+1$, which is absurd for $n\geq 2$.

(2) Write $\lambda=p/q$ for $p\in \mathbb{Z}$ and $q\in\mathbb{Z}_{>0}$ with $\text{gcd}(p,q)=1$ Then $\binom{\lambda+n+1 }{n }\in \ZZ$ implies that \begin{equation}q^{n}\mid (p+(n+1)q)(p+nq)\cdots(p+2q), \nonumber \end{equation}
and hence $q\mid p^{n}$. Since $(p,q)=1$, this implies that $q=1$ and $\lambda$ is an integer.
\end{proof}

\begin{lemma}\label{positive polynomial}Let $m,K$ be two positive integers and $\{q_\alpha^s\mid 1\leq \alpha\leq K, 1\leq s\leq m\}$ a set of non-negative numbers. Consider the homogenous polynomial
$$
f(x_1,\ldots, x_m)=\left(-x_1+\sum_{\alpha=1}^K\sum_{s=2}^mq _\alpha^s x_s\right)\cdot \prod_{\alpha=1}^K \left(x_1+\sum_{s=2}^mq_\alpha^s x_s\right)+x_1^{K+1}.
$$
Then all coefficients of $f$ are non-negative.
\end{lemma}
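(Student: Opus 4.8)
The plan is to expand $f$ as a polynomial in $x_1$ whose coefficients are polynomials in $x_2,\dots,x_m$, and to identify those coefficients through elementary symmetric functions. First I would introduce the linear forms $L_\alpha:=\sum_{s=2}^m q_\alpha^s x_s$ for $1\le\alpha\le K$; each $L_\alpha$ has non-negative coefficients by hypothesis. Writing $e_j$ for the $j$-th elementary symmetric polynomial in $L_1,\dots,L_K$, so that $e_0=1$ and $e_1=\sum_{\alpha}L_\alpha$, the product factor expands as $\prod_{\alpha=1}^K(x_1+L_\alpha)=\sum_{j=0}^K e_j\,x_1^{K-j}$, while the leading factor is precisely $e_1-x_1$. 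Thus $f=(e_1-x_1)\sum_{j=0}^K e_j\,x_1^{K-j}+x_1^{K+1}$.

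Next I would collect powers of $x_1$. The term $x_1^{K+1}$ coming from $-x_1\cdot e_0 x_1^{K}$ is cancelled exactly by the added $x_1^{K+1}$, and a short bookkeeping shows that for each $1\le j\le K+1$ the coefficient of $x_1^{K+1-j}$ equals $e_1 e_{j-1}-e_j$ (with the convention $e_{K+1}=0$). Since $x_1$ itself has non-negative coefficients, it then suffices to prove that each $e_1 e_{j-1}-e_j$ has non-negative coefficients as a polynomial in $x_2,\dots,x_m$.

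The step I expect to carry all the weight is the identity $e_1 e_{j-1}=j\,e_j+R_j$, where $R_j$ is a polynomial with non-negative coefficients. This comes from writing $e_1 e_{j-1}=\bigl(\sum_\alpha L_\alpha\bigr)\bigl(\sum_{|I|=j-1}\prod_{\beta\in I}L_\beta\bigr)$ and splitting the product: a term $L_\alpha\prod_{\beta\in I}L_\beta$ with $\alpha\notin I$ is a squarefree monomial indexing a $j$-subset, and each $j$-subset is produced exactly $j$ times, yielding $j\,e_j$; the remaining terms, those with $\alpha\in I$, carry a repeated factor $L_\alpha^2$ and assemble into a sum $R_j$ of products of the $L_\alpha$'s, which visibly has non-negative coefficients. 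Hence $e_1 e_{j-1}-e_j=(j-1)e_j+R_j$, and since $j\ge 1$, each $e_j$ is a non-negative combination of the $L_\alpha$'s, and $R_j$ has non-negative coefficients, the whole expression does too. The boundary cases fall out of the same formula (for $j=1$ one gets $e_1 e_0-e_1=0$, and for $j=K+1$ one gets $e_1 e_K-e_{K+1}=R_{K+1}=e_1 e_K$). Combining everything, $f$ is a sum of products $x_1^{K+1-j}\cdot\bigl(\text{polynomial with non-negative coefficients}\bigr)$, and therefore all its coefficients are non-negative, as claimed.
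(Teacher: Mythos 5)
Your proof is correct, and its skeleton coincides with the paper's: both expand $f$ in powers of $x_1$ and reduce to showing that, with $L_\alpha=\sum_{s\ge 2}q_\alpha^s x_s$ and $e_j$ the elementary symmetric polynomials in $L_1,\dots,L_K$, each coefficient $e_1e_{j-1}-e_j$ (convention $e_{K+1}=0$) has non-negative coefficients as a polynomial in $x_2,\dots,x_m$. Where you genuinely diverge is in how that non-negativity is established. The paper uses an ordering trick: it groups each $j$-subset under the $(j-1)$-subset of its smallest $j-1$ indices, cancels the corresponding part of $e_1e_{j-1}$, and arrives at the explicit non-negative expression $\sum_{\alpha_1<\cdots<\alpha_{j-1}}\bigl(\sum_{\alpha\le\alpha_{j-1}}L_\alpha\bigr)\prod_{i}L_{\alpha_i}$ for the coefficient of $x_1^{K+1-j}$. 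You instead invoke the multiplicity identity $e_1e_{j-1}=j\,e_j+R_j$, where $R_j$ collects the terms carrying a repeated factor $L_\alpha^2$, so that $e_1e_{j-1}-e_j=(j-1)e_j+R_j$ is visibly a sum of products of the $L_\alpha$ with non-negative coefficients; your handling of the boundary cases $j=1$ and $j=K+1$ is also correct. The two cancellation schemes are equally elementary and of the same length; yours is more symmetric (no ordering of indices is needed) and in fact yields the stronger coefficientwise inequality $e_1e_{j-1}\succeq j\,e_j$, whereas the paper's argument produces a cleaner closed formula for each coefficient of $f$.
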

\begin{proof}
Consider $f$ as a polynomial of $x_1$ with coefficients in terms of $x_2,\ldots, x_m$. We need to show that for $1\leq k\leq K+1$, the coefficient of $x_1^k$ is a polynomial in terms of $x_2,\ldots, x_m$ with non-negative coefficients. It is easy to see that the coefficient of $x_1^{K+1}$ and $x_1^{K}$ are $0$. Fix $1\leq k\leq K$, then the coefficient of $x_1^{K-k}$ is
\begin{align*}
{}& \left(\sum_{\alpha=1}^K\sum_{s=2}^mq _\alpha^s x_s\right)\cdot \left(\sum_{\alpha_1<\cdots <\alpha_k}\prod_{j=1}^k\sum_{s=2}^mq_{\alpha_j}^s x_s\right)- \sum_{\alpha_1<\cdots <\alpha_{k+1}}\prod_{j=1}^{k+1}\sum_{s=2}^mq_{\alpha_j}^s x_s\\
={}&\sum_{\alpha_1<\cdots <\alpha_k}\left( \left(\sum_{\alpha=1}^K\sum_{s=2}^mq _\alpha^s x_s\right)\cdot \left(\prod_{j=1}^k\sum_{s=2}^mq_{\alpha_j}^s x_s\right) - \sum_{\alpha_{k+1}>\alpha_k}\prod_{j=1}^{k+1}\sum_{s=2}^mq_{\alpha_j}^s x_s\right)\\
={}&\sum_{\alpha_1<\cdots <\alpha_k}\left( \left(\sum_{\alpha=1}^K\sum_{s=2}^mq _\alpha^s x_s\right)\cdot \left(\prod_{j=1}^k\sum_{s=2}^mq_{\alpha_j}^s x_s\right) - \left(\sum_{\alpha_{k+1}>\alpha_k}\sum_{s=2}^mq_{\alpha_{k+1}}^s x_s\right)\cdot\left(\prod_{j=1}^{k}\sum_{s=2}^mq_{\alpha_j}^s x_s\right)\right)\\
={}&\sum_{\alpha_1<\cdots <\alpha_k}\left( \left(\sum_{\alpha\leq \alpha_k}\sum_{s=2}^mq _\alpha^s x_s\right)\cdot \left(\prod_{j=1}^k\sum_{s=2}^mq_{\alpha_j}^s x_s\right)\right)
\end{align*}
which is a polynomial in terms of $x_2,\ldots, x_m$ with non-negative coefficients.
\end{proof}

\end{document}